\numberwithin{equation}{section}
\newtheorem{theorem}{Theorem}
\newtheorem{meta-thm}[theorem]{Meta-Theorem}
\newtheorem{lemma}[theorem]{Lemma}
\newtheorem{cor}[theorem]{Corollary}
\newtheorem{proposition}[theorem]{Proposition}
\newtheorem{algorithm}[theorem]{Algorithm}
\newtheorem{remark}[theorem]{Remark}
\newtheorem{definition}[theorem]{Definition}
\newcommand\elem{ \end{lemma} }
\newcommand\lem[1]{ \begin{lemma}\label{#1}}
\newcommand{\noaverage}[1]{({#1})^0}
\newcommand\normm[2]{\left\|{#1}\right\|_{#2}}
\newcommand\norm[3]{\left\|{#1} \right\|_{#2,#3}}
\newcommand\expan[2]{#1^{[\leq #2]}}
\newcommand\expanin[3]{#1^{(#2,#3]}}
\newcommand\ord[1]{\mathcal{O}\left(|\varepsilon |^{#1}\right)}
\newcommand{\Aa}[2]{\mathcal{A}_{#1,#2}}
\newcommand\beq[1] { \begin{equation}\label{#1} }
\newcommand{\eeq}{ \end{equation} }
\newcommand\beqa[1]{ \begin{eqnarray} \label{#1}}
\newcommand{\eeqa}{ \end{eqnarray} }
\newcommand{\beqano}{ \begin{eqnarray*} }
\newcommand{\eeqano}{ \end{eqnarray*} }
\newcommand{\balino}{ \begin{align*} }
\newcommand{\ealino}{ \end{align*} }
\definecolor{indiagreen}{rgb}{0.07, 0.53, 0.03}
\def\e{\varepsilon}
\def\bmid{\mathop{\,\big|\,}}
\def\dist{\operatorname{dist}}
\def\domain{\operatorname{Domain}}
\def\Id{\operatorname{Id}}
\def\Im{\operatorname{Im}}
\def\det{\operatorname{det}}
\def\A{{\mathcal A}}
\def\B{{\mathcal B}}
\def\C{{\mathcal C}}
\def\D{{\mathcal D}}
\def\E{{\mathcal E}}
\def\G{{\mathcal G}}
\def\M{{\mathcal M}}
\def\N{{\mathcal N}}
\def\R{{\mathcal R}}
\def\O{{\mathcal O}}
\def\T{{\mathcal T}}
\def\complex{{\mathbb C}}
\def\integer{{\mathbb Z}}
\def\nat{{\mathbb N}}
\def\real{{\mathbb R}}	
\def\torus{{\mathbb T}}
\newcommand{\dpy}{\displaystyle}
\def\r{{\rho}}
\def\g{{\gamma}}
\def\l{{\lambda}}
\def\d{{\delta}}
\def\t{{\theta}}	
\newcommand{\Addresses}{{
\bigskip
\footnotesize

A.P.~Bustamante, \textsc{School of Mathematics, Georgia Institute of Technology}\par\nopagebreak \textit{E-mail address},\texttt{apb7@math.gatech.edu}

\medskip

R.~de la Llave, \textsc{School of Mathematics, Georgia Institute of Technology}\par\nopagebreak \textit{E-mail address},\texttt{rafael.delallave@math.gatech.edu}

}}
\begin{document}

\title{Gevrey estimates for asymptotic expansions of tori in weakly dissipative systems}

\author{ Adri\'an P. Bustamante \and  Rafael de la Llave } 
\keywords{Gevrey estimates, Dissipative systems, quasi{-}periodic solutions}
\subjclass{35C20, 70K70, 70K43, 37J40, 34K26,30E10}

\thanks{Both authors have been supported by NSF grant DMS 1800241}



\begin{abstract}

We consider a singular perturbation for a family of analytic symplectic maps of the annulus possessing a KAM torus. The perturbation introduces dissipation and  contains an adjustable parameter. By choosing the adjustable parameter, one can ensure that the torus persists under perturbation. Such models are common in celestial mechanics.  In field theory, the adjustable parameter is called \emph{the counterterm} and in celestial mechanics, the \emph{drift}. It is known that there are formal expansions in powers
of the perturbation both for the quasi-periodic solution and the counterterm.

We prove that the asymptotic expansions for the quasiperiodic solutions and the counterterm satisfy Gevrey estimates. That is, the $n$-th term of the expansion is bounded by a power of $n!$. The Gevrey class (the power of $n!$) depends only on the Diophantine condition of the frequency and the order of the friction coefficient in powers of the perturbative parameter.

The method of proof we introduce may be of interest beyond the problem considered here. We consider a modified Newton method in a space of power expansions. As it is custumary in KAM
theory, each step of the method is estimated in a smaller domain. In contrast with the KAM results, the domains where we control the Newton  method  shrink very fast and the Newton method does not prove that the solutions are analytic. On the other hand, by examining carefully the
process, we can obtain estimates on the coefficients of the expansions and conclude
the series are Gevrey. 

\end{abstract}

\maketitle

\section{Introduction}

Hamiltonian systems with small dissipation appear as models of many problems of physical interest.  Notably, dissipation is a small effect
in astrodynamics of planets and satellites \cite{MilaniNF87,Celletti13} 
\footnote{A problem in astrodynamics which motivate us is  the 
\emph{spin orbit problem} describing approximately the motion of an 
oblate planet, subject to tidal friction, in a Keplerian orbit \cite{Celletti91}}. 
In the design of many mechanical devices, 
eliminating friction is a design goal which is never 
completely accomplished. Hamiltonian systems with friction also 
 appear as
Euler-Lagrange equations of discounted functionals which are natural
in finance and in the receding horizon problem 
in control theory. In such a case the limit
of zero discount (equivalent to 
the limit of zero friction)  is of interest.  See
\cite{Bensoussan88,MeadowsHER95,IturriagaS11, DaviniFIZ16}  
for different studies of the zero dissipation limit in 
calculus of variations 
and in control.

Since the friction is small, it is natural to 
try to study such systems using perturbation theory. 
Nevertheless, adding a small friction is 
a very singular perturbation, and  periodic/quasi-periodic orbits may 
disappear for arbitrarily small values or the perturbation. 
In contrast with Hamiltonian systems that often have 
sets of quasi-periodic orbits of positive measure (KAM theorem), 
for  dissipative forced systems, there are few periodic or 
quasi-periodic orbits. These quasi-periodic orbits are 
known to persist only if one can adjust parameters in 
the system \cite{Moser67,BroerHS96,Sevryuk99}. 
As discussed very clearly in \cite{Moser73}, the number of 
parameters needed is affected by the geometric properties of the systems considered. 

In recent times, for some 
particular types of dissipative systems -- the conformally symplectic 
systems, see Definition \ref{def:conformallysymplectic} -- 
there is a very systematic KAM theory \cite{Cal-Cel-Lla-13}
based on geometric arguments. The examples mentioned above (Hamiltonian 
systems with friction proportional to the momentum and Euler-Lagrange 
equations of exponetially discounted variational principles) 
are conformally symplectic.  This theory, once we fix a frequency, 
predicts the changes of parameters and the changes in the solutions 
needed to obtain a quasi-periodic solution of the prescribed frequency. 

The goal of this paper is to study the singular perturbation theories
in which the perturbation introduces dissipation. 

There are several studies of the singular perturbation 
theories in dissipation which are particularly relevant for us:
The paper \cite{Cal-Cel-Lla-16}  shows that if one fixes 
a Diophantine frequency  $\omega$ (see Definition~\ref{diophantine}), considers a Hamiltonian system -- not necessarily 
integrable -- with a quasi-periodic solution of frequency $\omega$, and introduces a conformally symplectic perturbation (see Definition~\ref{def:conformallysymplectic}), then
there is a (unique under a natural normalization) 
formal power series expansion  for the quasi-periodic 
solution of frequency $\omega$ and for the drift parameter. These series are very similar 
to the Lindstedt series of classical mechanics. 
 The paper \cite{Cal-Cel-Lla-16} also showed
that the formal Lindstedt series is  the asymptotic expansion of 
a true solution defined in a complex domain of parameters that does 
not include any ball around zero (giving an indication that the power
series may be divergent).  The paper \cite{Bus-Cal-19} studied numericaly 
these Lindstedt series in a concrete example 
and the possible domain of analyticity of the function (using Pad\'e as well 
as non-perturbative methods). 
The numerical studies in \cite{Bus-Cal-19} lead to the remarkable conjecture that, in the cases 
examined,  the formal power series giving 
the quasiperiodic solution and the forcing 
are  Gevrey (see Definition~\ref{gevrey_def}). 

In this paper, for some class of maps (we require that 
the system is conformaly symplectic and that the non-linearity 
is a trig. polynomial) we show that the conjecture in \cite{Bus-Cal-19} 
is true and that the series obtained are indeed Gevrey. The 
Gevrey class can be bounded depending only on the Diophantine 
condition of the frequency $\omega$ (and the order of the friction in 
the dissipation).  See Theorem~\ref{main_theo_dsm}. 

The method of proof we introduce may be of interest beyond the problem considered here and we hope that there are other applications. 
We consider a Newton method in the space of power expansions. 
As in KAM theory, each step of the quadratically convergent method is estimated in a domain smaller than the domain of the previous steps. In contrast with KAM theory, 
the domains where we control the results shrink very fast to a point, so 
that, at the end we do not obtain any analytic function. On the 
other hand, by examining carefully the process, we can obtain estimates
on the coefficients of the expansions. 

Our hypothesis that the  non-linearity is a trigonometric polynomial ensures that the coefficients of 
order $N$ do not change after $\log_2(N)$ steps of the Newton method, so
that one can use Cauchy estimates in the domain that is under control after 
 $\log_2(N)$ steps to obtain estimates on the $N$th coefficient. 

We hope that the hypothesis that the non-linearity is a trigonometric polynomial can be removed
at the price of estimating the change of the coefficients in subsequent 
iterations, but a proof would require a new 
set of estimates that -- if indeed possible -- would lengthen the  
exposition and obscure the main ideas. 

The Newton method acting on power series is patterned after the Newton 
method used in \cite{Cal-Cel-Lla-13}. This Newton method takes 
advantage of remarkable cancellations related to the geometry
and introduces the corrections to the torus additively (rather 
than making changes of variables). The fact that 
the Newton method in \cite{Cal-Cel-Lla-13} does 
not involve changes of variables  makes it possible 
to lift it to  formal power series. We will present full details
later. 

For simplicity in the treatment, we will deal with  maps since 
the geometric arguments are simpler. The same arguments apply 
for differential equations, but they are more elaborate. Besides 
adapting the proof of maps to the case of ODE's, one 
can  deduce  rigorously the results for differential equations from 
the results for maps by taking time-$T$ maps. Note that in this case, 
the fact that the non-linearity in the time-$T$ map is a trig. polynomial
is difficult to express in terms of the original ODE. This 
is another reason why we would like eventually to get rid of 
that hypothesis. 


\subsection{A preview of the main result}

A model to keep in mind is the so-called dissipative standard map $f_{\e,\mu_\e}:\torus\times\real \longrightarrow \torus\times\real$ given by
\begin{equation}\label{diss-est-map}
    f_{\e,\mu_\e}(x,y)= (x+\l(\e)y + \mu_\e -\e V'(x) , \l(\e)y + \mu_\e -\e V'(x)) 
\end{equation}
In  \eqref{diss-est-map}, the physical meaning of  $\l(\e)=1-\e^\alpha$, $\alpha\in\nat$, is dissipation and $\mu_\e$, called the \emph{drift} parameter, has the physical meaning of a forcing. Our assumption on the non-linearity amounts to $V$ being a trigonometric polynomial. The model \eqref{diss-est-map} is indeed conformally symplectic in the sense of Definition \ref{def:conformallysymplectic} (see below). The map~\eqref{diss-est-map} is the model that was used in the numerical experiments in \cite{Bus-Cal-19}. 


Note that for $\e = 0$, the map \eqref{diss-est-map} is integrable.
The integrability of the map at $\e = 0$ does not play any role 
in the theoretical results in \cite{Cal-Cel-Lla-16},  the only assumption 
needed in \cite{Cal-Cel-Lla-16} is that map for $\e= 0$ is symplectic and 
has as an invariant torus. For the numerical study in 
\cite{Bus-Cal-19}, the fact that the map for $\e= 0$ is integrable 
leads to much more efficient algorithms.  In this paper, we will not use 
explicitly the integrability for $\e=0$, but this seems to be 
the only case where it is possible to verify the assumption 
on the nonlinearity being a trig polynomial (yet another reason 
to try to get rid of that hypothesis). 

The main result of this paper, Theorem \ref{main_theo_dsm}, establishes the Gevrey character of the formal power series expansions for the drift parameter $\mu_\e$ and for the quasi-periodic orbit of frequency $\omega$ of the map \eqref{diss-est-map}. The rigorous formulation of the main Theorem is given in Section \ref{sec_main_res}, the statements of the main results can be better understood after some preliminary definitions and remarks are given (see Section \ref{Prelims}). Here we give an informal statement of our main result: Given a Diophantine frequency $\omega$, the coefficients of the formal power series expansions $\sum K_n\e^n$ and $\sum \mu_n\e^n$  for the quasi-periodic orbit and the drift parameter, respectively, satisfy the following Gevrey estimates $$\|K_n\| \leq CR^nn^{(2\tau/\alpha)n}  \qquad |\mu_n|\leq CR^n n^{(2\tau/\alpha)n} $$  where $\tau$ depends on the Diophantine type of $\omega$ (see Definition \ref{diophantine}) and $\alpha$ is the order of the dissipation $\l(\e) = 1 - \e^\alpha$.

The model~\eqref{diss-est-map} can be thought as a numerical time step -- using a Verlet-like method -- of 
the spin-orbit problem 
\begin{equation}\label{spin-orbit1} 
\begin{split} 
&\dot x = y\\
&\dot y = - \mu y + \l  + V'(x) 
\end{split} 
\end{equation}

\subsection{Organization of the paper}

The paper is organized as follows. In Section \ref{Prelims} we collect some standard definitions and we also define the function spaces in which the iterative procedure takes place. Also, in the same section we present some geometric identities which allow us to solve the linearized equations of the modified Newton method. In Section \ref{sec_main_res} we state Theorem \ref{main_theo_dsm} and Lemma \ref{main_theo}, which are the main results of the paper and establish the  Gevrey character of the perturbative expansions of the quasi periodic orbits.

The proof of Theorem \ref{main_theo_dsm} is based on a quasi Newton method. In Section \ref{sec_new_step} we formulate the iterative step of this Newton method, while in Section \ref{sec_nolin_est} we provide estimates for the corrections and the new error at one  step of the method. Finally, in Section \ref{sec_proof_main}, using a KAM like argument, we give estimates for any step of the Newton like procedure and, with them,  a proof of Lemma \ref{main_theo} is given establishing the Gevrey character of the perturbative expansions.

\section{Preliminaries } \label{Prelims}

In this section we introduce  the 
notations, collect some standard definitions 
including the 
Banach spaces and their norms that enter in 
this  paper.  This section should be used as a reference.

\subsection{Symplectic properties} 

Let $\M = \torus^d\times B $, $B\subseteq \real^d$; endowed with an
exact symplectic form $\Omega$. 
Note that the manifold $\M$ is Euclidean (i.e. the tangent bundle is 
trivial) and we can compare vectors in different tangent spaces. 
This is crucial in KAM theory. 

We denote by $J$ the matrix
associated to the symplectic form $\Omega$, i.e., in coordinates we
have $\Omega_x(u,v) = (u,J(x)v)$ 
where $(\cdot, \cdot)$ denotes the inner product for any $u,v\in T_x\M$.
Note that $J$ depends on the choice of the inner product.

\begin{definition}\label{def:conformallysymplectic} 
We say that a diffeomorphism defined on an symplectic 
manifold $(\M, \Omega)$ is  conformally symplectic
when  $$  f^* \Omega = \lambda \Omega $$  for a number $\lambda$, where 
$f^*$ denotes the standard pull back on forms. 
\end{definition} 

The map \eqref{diss-est-map} is conformally symplectic with the conformal factor $\lambda(\e) = 1-\e^\alpha$ and the standard symplectic form $\Omega = dx\wedge dy$ on the cylinder $\torus \times \real$.

\subsection{Banach spaces of analytic functions} 

\subsubsection{Analytic functions on the torus} 

Given $\r>0$ we define the complex extension of the $d$-dimensional
torus as $$\torus_\rho^d=\left\{z\in \complex^d/\integer^d \,|\,
\operatorname{Re}(z_j)\in \torus,\,|\Im(z_j)|\leq \rho\right\} $$ and
denote $\A_\rho$ as the vector space of analytic functions defined
$\operatorname{int}(\torus_\rho^d)$ which can be extended continuously
to the boundary of $\torus_\rho^d$. $\A_\rho$ is endowed with the
norm $$\normm g \r =\sup_{\theta\in \torus^d_\rho} |g(\theta)|$$ which makes it into a Banach space.

For vector valued functions, $g=(g_1,g_2,..., g_d)$, we define the
norm $$\normm{g}{\r} = \sqrt{\normm{g_1}{\r}^2 +\normm{g_2}{\r}^2+...+
  \normm{g_d}{\r}^2}$$ and for $n_1\times n_2$ matrix valued functions,
G, we define $$\normm{G}{\r} = \sup_{v\in \real^{n_2},|v|=1}
\sqrt{\sum_{i=1}^{n_1} \left(
  \sum_{j=1}^{n_2}\normm{G_{ij}}{\r}v_j\right)^2 }. $$

We will also need to work with functions of 
two variables. Denoting $B_\g(0)\subseteq \complex$ the open ball
with center zero and radius $\g$ in the complex plane, define
$$\A_{\r,\g}=\left\{K:{B_\g(0)}\rightarrow \A_\rho \bmid K\mbox{ is analytic in }B_\g(0) \mbox{ and can be extended continuously to } \overline{B_\g(0)} \right\} $$ endowed with the norm $$\norm K\r\g:=\sup_{|\e|\leq\g}\normm{K(\e)}{\r}.$$ 
It is well known that with the norms $\norm{\cdot }{\r }{\g}$ and $\normm{\cdot }{\r}$ the spaces $\A_{\r, \g}$ and $\A_\r$ are Banach algebras.

To discuss analyticity properties, we will need to deal 
with complex values of all the arguments. For phyical applications, 
we need mainly real variables. Hence, it will be important that the 
functions we consider have the property that they yield real values 
for real arguments.   The functions that satify this property (real valued 
for real arguments) is 
a closed (real) subspace of the above Banach spaces.  
All the constructions we use have the property 
that when applied to real valued functions, they produce real valued functions.



Note that we can think of functions $\A_{\r,\g}$ as
analytic functions on $B_\g(0)$ taking values on a space of 
analytic functions of the torus. This point of view is 
consistent with the interpretation that we are considering families of 
problems and we are seeking families of solutions. 

For typographical reasons from now on we will use the following notation.
Given $K\in\Aa\r\g$ we denote $K_\e(\t)=K(\t,\e):=(K(\e))(\t)$.

\begin{definition}
Let $\B$ a Banach space. Given an analytic function $g: B_\g(0)\subseteq\complex\longrightarrow \B$, and $n\geq 0$, we say $g(\e)\sim \ord n$ if and only if there exists $C>0$ such that $$\|g(\e)\|\leq C|\e|^n$$for $\e$ small enough. Equivalently, $g(\e)\sim \ord n$ if and only if $g(\e)=\sum_{k=n}^\infty g_k\e^k$ for $\e$ small enough and $g_k \in \B$.  
\end{definition}

\subsubsection{Cauchy estimates.}

We recall the classical Cauchy inequalities, see \cite{Zygmund65}.

\begin{lemma} For any $0<\d\leq \r$ and for any function $f\in \A_\r$ we have $$\normm {D^nf}{\r-\d}\leq C\d^{-n}\normm f\r,$$ where $D^n$ denotes the n-th derivative and $$|\hat{f}_k|\leq e^{-2\pi|k|\r}\normm f\r $$ where $|k|=|k_1|+|k_2|+\dots +|k_n|$ and $\hat{f}$ denotes the Fourier coefficient of $f$ with index $k$.
\end{lemma}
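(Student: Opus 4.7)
The plan is to derive both inequalities directly from analyticity by exploiting the Cauchy integral formula in a polydisk for the first estimate and a contour shift on the torus for the Fourier coefficient estimate. Both are classical, and the proof is essentially the observation that analyticity on $\torus^d_\r$ forces quantitative control on derivatives on smaller strips and exponential decay of Fourier modes.

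For the derivative bound, I would fix $\theta^* \in \torus^d_{\r-\d}$ and notice that $f$ is analytic on the polydisk $\{z \in \complex^d : |z_j - \theta_j^*| \leq \d,\ j=1,\dots,d\}$, which is contained in $\torus^d_{\r}$ by the choice of $\theta^*$. Applying the multi-dimensional Cauchy integral formula for the partial derivative $\partial^\alpha f$ with $|\alpha|=n$ on this polydisk gives
\[
|\partial^\alpha f(\theta^*)| \leq \frac{\alpha!}{\d^{|\alpha|}}\, \normm{f}{\r}.
\]
Taking the supremum over $\theta^*\in \torus^d_{\r-\d}$ and summing (or taking the appropriate operator norm) over multi-indices $\alpha$ with $|\alpha|=n$ yields the stated estimate with a combinatorial constant $C=C(d,n)$ absorbed into the constant in front. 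The only small point is to confirm that the norm defined for tensor-valued objects in Section \ref{Prelims} is compatible with this pointwise bound, which follows since the norm is defined as a sup over $\theta$ composed with a finite-dimensional operator norm.

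For the Fourier coefficient bound, I would start from the definition $\hat{f}_k = \int_{\torus^d} f(\theta) e^{-2\pi i k\cdot \theta}\, d\theta$ and exploit analyticity to shift the contour. Concretely, replace $\theta_j$ by $\theta_j - i\,\r\,\operatorname{sgn}(k_j)$ for each coordinate: since $f$ is analytic and periodic on $\torus^d_\r$, Cauchy's theorem (applied to each coordinate separately, with the periodic contours top and bottom cancelling) yields
\[
\hat{f}_k = \int_{\torus^d} f\bigl(\theta - i\r\,\operatorname{sgn}(k)\bigr)\, e^{-2\pi i k\cdot(\theta - i\r\,\operatorname{sgn}(k))}\, d\theta.
\]
The new exponential factor $e^{-2\pi \r\sum_j |k_j|} = e^{-2\pi |k|\r}$ pulls out of the integral, and bounding $|f|$ by $\normm{f}{\r}$ on the shifted contour gives the desired estimate.

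The main ``obstacle'' is purely bookkeeping: making the polydisk argument match the matrix/vector norm conventions fixed in Section \ref{Prelims}, and making sure the contour shift respects the one-dimensional-at-a-time structure so that periodicity does cause the vertical segments to cancel. Neither is substantive; the result is stated mainly as a reference, and I would simply cite \cite{Zygmund65} for the underlying one-variable computations and sketch the extension to the $d$-dimensional torus setting.
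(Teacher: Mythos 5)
Your proof is correct, and it is the standard argument: the paper itself does not include a proof of this lemma (it is stated as a recollection with a citation to \cite{Zygmund65}), so there is no ``paper's approach'' to compare against in detail. Both parts of your argument go through exactly as you describe --- the polydisk Cauchy formula centered at $\theta^*\in\torus^d_{\r-\d}$, with polydisk radius $\d$, gives $|\partial^\alpha f(\theta^*)|\leq \alpha!\,\d^{-|\alpha|}\normm{f}{\r}$, and summing over $|\alpha|=n$ in whatever tensor norm is being used just contributes a dimension-dependent constant; the contour shift $\theta_j\mapsto \theta_j - i\r\operatorname{sgn}(k_j)$ for the Fourier coefficient is legitimate because periodicity in each $\theta_j$ cancels the vertical segments, and it produces exactly the factor $e^{-2\pi\r|k|}$. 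One cosmetic point worth flagging that you do not mention: in the lemma as printed, the formula $|k|=|k_1|+\cdots+|k_n|$ uses $n$ both as the derivative order and (implicitly) as the torus dimension; it should read $|k|=|k_1|+\cdots+|k_d|$. This is a typo in the statement, not a gap in your proof.
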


As mentioned above we will be working with functions depending upon two variables. The following are Cauchy inequalities in the second variable, $\e$. 

\lem {cauchy_epsilon}

For any $0<r\leq\g$ and any function $f\in \A_{\r,\g}$ such that $f_\e(\t)=\sum_{n=0}^\infty f_n(\t)\e^n$ we have $$\normm{f_n}\r\leq \frac{1}{r^n}\norm f\r r .$$
\elem
\begin{proof} By Cauchy integral formula
$$f_n(\t)=\left.\frac{1}{n!}\frac{d^n}{d\e^n}f(\theta,\e)\right|_{\e=0}
= \frac{1}{2\pi i}\int_{|\xi|=r}\frac{f(\t,\xi)}{\xi^{n+1}}d\xi
=\frac{1}{2\pi r^n}\int_0^{2\pi}\frac{f(\t,re^{i\phi})}{e^{in\phi}}d\phi,$$ thus, $\dpy{|f_n(\theta)|\leq\frac{1}{r^n}\sup_{|\e|\leq r}|f(\t,\e)|}$ and $\dpy{\normm{f_n}\r\leq\frac{1}{r^n}\norm f\r r}$.
\end{proof}

\begin{cor}\label{lem_cauchy_est_ser}
Assume that $\Delta\in \Aa\r\g$ is such that $\Delta_\e =\sum_{n=N+1}^\infty \Delta_n\e^n$. Let $a,b\in\nat$ such that $N\leq a < b\leq \infty$ and denote $\expanin {\Delta_\e} a b = \sum_{n=a+1}^b \Delta_n\e^n$. Then, for all $0<r<1$ we have $$\norm{\expanin \Delta ab}{\r}{r\g} \leq \frac{r^{a+1}}{1-r}\norm \Delta\r\g. $$
\end{cor}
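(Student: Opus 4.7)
The plan is to combine the Cauchy estimate on the Taylor coefficients (Lemma \ref{cauchy_epsilon}) with a geometric series bound. The key observation is that the $\|\cdot\|_{\rho, r\gamma}$ norm involves a supremum over $|\varepsilon|\leq r\gamma$, so for each such $\varepsilon$ the partial Taylor tail can be bounded termwise by $\|\Delta_n\|_\rho |\varepsilon|^n$.

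First, I would apply Lemma \ref{cauchy_epsilon} to $\Delta\in \Aa\r\g$ directly (with the radius $\gamma$, not $r\gamma$), obtaining
\[
\normm{\Delta_n}{\r} \leq \gamma^{-n}\norm{\Delta}{\r}{\g}
\]
for every $n$. Next, since $\A_\rho$ is a Banach space, for any fixed $\varepsilon$ with $|\varepsilon|\leq r\gamma$ I can bound the norm of the partial sum by the triangle inequality:
\[
\normm{\expanin{\Delta_\e}{a}{b}}{\r} \leq \sum_{n=a+1}^{b} \normm{\Delta_n}{\r}\,|\varepsilon|^n \leq \norm{\Delta}{\r}{\g}\sum_{n=a+1}^{b} \gamma^{-n}(r\gamma)^n = \norm{\Delta}{\r}{\g}\sum_{n=a+1}^{b} r^n.
\]

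The remaining step is to estimate the geometric tail: since $0<r<1$,
\[
\sum_{n=a+1}^{b} r^n \leq \sum_{n=a+1}^{\infty} r^n = \frac{r^{a+1}}{1-r}.
\]
Taking the supremum over $|\varepsilon|\leq r\gamma$ yields the stated inequality. There is no serious obstacle; the only point requiring attention is to make sure the Cauchy bound is applied with the outer radius $\gamma$ (so that the saving of $\gamma^{-n}$ cancels the factor $\gamma^n$ coming from $|\varepsilon|^n\leq (r\gamma)^n$), leaving the clean geometric factor $r^n$ that sums to $r^{a+1}/(1-r)$. The hypothesis $a\geq N$ is not actually used in the estimate itself — it simply guarantees that $\expanin{\Delta_\e}{a}{b}$ is indeed a tail of the expansion of $\Delta_\varepsilon$, consistent with the notation introduced earlier.
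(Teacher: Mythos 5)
Your proof is correct and is exactly the intended argument: the paper states this as a corollary of Lemma~\ref{cauchy_epsilon} without writing out a proof, and the combination of the Cauchy coefficient bound at radius $\gamma$ with the geometric tail $\sum_{n\geq a+1}r^n = r^{a+1}/(1-r)$ is precisely what is meant. Your side remark that the hypothesis $a\geq N$ serves only to make $\expanin{\Delta_\e}{a}{b}$ a tail of $\Delta_\e$ rather than being used in the estimate is also accurate.
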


\begin{remark}
Note that the estimate in Corollary \ref{lem_cauchy_est_ser} only depends on $a$, associated with  the order of the first term in the expansion of $\expanin \Delta ab$.
\end{remark}

\subsection{Formal power series} 
\subsubsection{General definitions} 
Formal power series expansions are 
just expressions of the form 
\[
\sum_n a_n \e^n 
\]
where $a_n$ belong to a Banach space, sometimes $a_n$ are 
just scalars. 

Formal power series are not meant to converge nor to represent a function. 
They can, however be added, multiplied (using the Cauchy formula for 
product; note that for a fixed degree, computing the coefficients involves only a 
finite sum) or substituted one into another. 

One can form equations among formal power series. The meaning is, 
of course, that the coefficients on each side should be the same. 
This is extremely useful in many areas of mathematics, notably combinatorics. 
See \cite{Cartan95}, \cite{Cos-09} for more details on formal power series. 

Many perturbation expansions in Physics or 
in applied mathematics are based precisely into formulating 
the solutions of the equations of motion as formal power series and requiring that the 
equations of motion  are satisfied in the sense of power series.  Notably, 
the Lindstedt series were in standard use in astronomy even 
if they were only shown to converge for some frequencies in \cite{Moser67}. 
\subsubsection{Asymptotic expansions} 

For formal power series, a notion weaker that convergence of the series 
to a function  is that the series is
asymptotic to a function. 

\begin{definition} \label{asymptotic} 
We say that a formal power series $\sum a_n \e^n$ with coefficients 
$a_n$ in a Banach space $X$, is an 
asymptotic expansion to a function $\phi: \mathcal{D} \rightarrow X$ 
when for all $N \in \integer$, there exists $C_N$  such  that for 
all $\rho < \rho_0 $
\[
\sup_{\e \in \mathcal{D}, |\e| \le \rho} \left\| \sum_{n=0}^N a_n \e^n - \phi(\e) \right\|
\le C_{ N} \rho^{N+1} 
\]
\end{definition} 

If the domain $\mathcal{D}$ does not include any ball centered 
at zero, even if the function $\phi$ is analytic and bounded on $\mathcal{D}$, this 
does not imply that the series converges.

Given a function $\phi$, the associated expansions may be non unique. 
The Cauchy example 
\begin{equation}\label{cauchy-ex}
\phi(\e) = \exp( - \e^{-2}) 
\end{equation} 
has an identically zero 
asymptotic expansion on a domain
\begin{equation}\label{Ddomain} 
\mathcal{D}_\delta = \{ \e  :\, | \text{Arg}(\e)| < \delta \} 
\end{equation}
when $\delta < \pi$. 

Note that the definition of asymptotic involves the domain $\mathcal{D}$. 
A series may be asymptotic to a function in a domain but not in a  
larger domain. For example the zero series 
is asymptotic to the  the Cauchy example~\ref{cauchy-ex} in the domains
$\mathcal{D}_\delta$ as in \eqref{Ddomain} when $\delta < \pi$,
but not when $\delta > \pi$. 

\subsubsection{Gevrey formal expansions}

Given a formal power series, even if it diverges, it is interesting to study how
fast the coefficients grow. The following definition captures some 
speed of growth that is weaker than convergence, but which nevertheless 
appears naturally in many applied problems.

\begin{definition}\label{gevrey_def} Let $\beta,\r > 0$. We say that a  power series expansion $f=\sum_{n=0}^\infty f_n(\t)\e^n$, with $f_n\in \A_{\r}$, belongs to a Gevrey class $(\beta,\r)$ if and only if there exist constants $C\geq 0$ , $R \geq 0$, and $n_0\in\nat$ such that 
\begin{equation} \label{gev-def-f}
\normm {f_n}{\r} \leq CR^n n^{\beta n}\quad\mbox{for } n\geq n_0,
\end{equation}
and we denote $f\in \G_{\r}^\beta$.

Similarly, we say that a power series expansion  $\mu= \sum_{n=0}^\infty \mu_n\e^n$, with $\mu_n\in \complex^d$, belongs to a Gevrey class $\beta$ if and only if there exist constants $C\geq 0$ , $R \geq 0$, and $n_0\in\nat$ such that \begin{equation} 
\left| \mu_n\right| \leq CR^n n^{\beta n}\quad\mbox{for } n\geq n_0,
\end{equation}
and we denote $\mu \in \G^\beta$. 
\end{definition}

\begin{remark}
It is well known that \eqref{gev-def-f} in Definition \ref{gevrey_def} is equivalent to the inequality $$\normm{f_n}{\r} \leq CR^n(n!)^\beta\quad \mbox{for }n\geq n_0 $$ which, in turn, implies the series $\sum_{n=0}^\infty\frac{f_n(\t)}{(n!)^\beta} \e^n$ converges
in $\A_\r$ with positive radius of convergence.

This remark makes a connection with the theory of Borel summability. 
If a series is Gevrey, under some extra conditions, the Borel 
transform produces a function that is analytic in a sector and the series 
is asymptotic to this function. See \cite{Cos-Gal-Gen-Giu-07}, \cite{Cos-09}.
\end{remark}

\begin{remark} 
 The class of functions 
that around each point have expansions satisfying 
 Definition~\ref{gevrey_def} has received a lot of interest recently since those 
functions are related to many deep theorems  of Dynamical Systems
(KAM, Nekhoroshev). Similar theories (e.g. hypoellipticity)
also admit Gevrey classes as natural regularity.

This paper goes in a different direction. Even if we 
start with an analytic problem -- indeed polynomial! --
several objects of interest are only Gevrey.   
The phemenon that Analytic problems 
have only Gevrey solutions has appeared in other contexts in dynamics, 
notably in the study of singular perturbations \cite{CanalisRSS00}, 
the regularity of attractors and fast-slow systems
 \cite{FoiasT89,Canalis91, Baesens95}. 
Closer to us, in dependence on parameters of 
solutions of non-linear problems,  \cite{Sauzin92,Lin92}, dependence 
of KAM tori in the frequency \cite{Popov00}, 
or in the theory 
of parabolic manifolds \cite{BaldomaH08, BaldomaFM17}.
\end{remark}

\subsubsection{A property from number theory} 
In KAM theory, some number theoretical properties of 
frequencies 
play an important role. We will use the standard:

\begin{definition}\label{diophantine} 
For $\nu, \tau > 0 $, 
we say $\omega\in\real^d$ is Diophantine of type $(\nu,\tau)$ if 
$\dpy{\left|e^{2\pi ik\cdot\omega}-1\right|\geq\nu|k|^{-\tau}}$.

We denote $\omega\in\D(\nu,\tau)$.
\end{definition}

\subsection{Quasi-periodic orbits}
\label{qp-functions}

A quasi-periodic sequence $\{x_n\}_{n \in \integer} $
 of frequency $\omega\in \real^d $ in a Euclidean manifold is a 
sequence which can be expressed in terms of  
Fourier series.
\[ 
x_n = \sum_{k \in \integer^d}  e^{2 \pi i k \cdot \omega n }  
\hat x_k  =  K( n \omega)  
\]
where $K(\theta) = \sum_{k \in \integer^d}  e^{2 \pi i k \cdot \theta } \hat x_k  $. 

We can think of the function $K$ as an embedding of the torus 
$\torus^d$
into phase space. If $\omega$ does not have any resonances 
(i.e. $k \cdot \omega \ne 0$ for $k \in \integer^d \setminus \{0\}$, 
which can always be arranged by reducing $d$ if there is one), 
then $\{ \omega n\}_{n \in \integer}$ is dense on the torus. 
The map $K$ is often called the \emph{hull function}.

If $x_n$ is an orbit of a map, $x_{n+1} = f(x_n)$ we 
see that $K(n\omega + \omega) = f( K(n \omega) )$. 
Since $\{ \omega n \}_{n \in \integer}$ is dense, this is 
equivalent to 
\begin{equation} \label{qp} 
K(\theta + \omega ) = f( K(\theta) ) \ \forall \theta \in \torus^d
\end{equation} 
Hence, we see that the set $K(\torus^d)$, the image of 
the standard torus under the embedding $K$ is invariant under 
$f$. So, it is customary to describe quasi-periodic solutions 
as \emph{invariant tori}.

The problem of given a map finding a quasi-periodic solution 
of frequency $\omega$ can be formulated as finding an embedding $K$ 
solving \eqref{qp}. 
The equation   \eqref{qp}  will be our fundamental tool to characterize 
quasi-periodic orbits.

\subsection{Set-up of the problem. The invariance equation} 

In this section, we describe informally the geometric set up 
and the geometric meaning of the formulation of our problem. 
The precise formulation of the main result of this paper 
(Theorem~\ref{main_theo})   will be presented in 
Section~\ref{sec_main_res}.

We will be mainly concerned with
an analytic family of maps
$f_{\e,\mu}:\M \longrightarrow \M $, such that 
$$f_{\e,  \mu}^*\Omega = \l(\e)\Omega$$ where $\e\in\complex$ is a small
parameter, $\mu\in\Lambda\subseteq\complex^d $ is an internal
parameter (the drift parameter), and $ \l(\e)=1-\e^\alpha$. 

A good example to keep in mind is the dissipative standard map 
presented in \eqref{diss-est-map}. Note that, for $\e = 0$ and for each $\mu$, the maps $f_{0,\mu}$ are symplectic because $\l(0) = 1$. 

The main assumption  in the main Lemma, Lemma~\ref{main_theo}, 
is  that the map $f_{0, \mu_0}$ has an invariant torus
in which the motion is a rotation of frequency $\omega$ 
which is Diophantine (see Definition~\ref{diophantine}). Note that the drift parameter, $\mu$, is chosen to guarantee the persistence of a quasi periodic orbit of a given frequency $\omega$, so we also consider $\mu = \mu_\e$.

Following the discussion in Section~\ref{qp-functions}
and, in particular \eqref{qp}, we see that
finding a quasi-periodic orbit for $f_{\e,\mu_\e}$  is equivalent to 
finding families of embeddings $K_\e$ and families of parameters $\mu_\e$ in such a way that 
\begin{equation} \label{inv} 
f_{\e, \mu_\e}\circ K_\e(\theta) = K_\e(\theta + \omega) 
\end{equation} 

Equation \eqref{inv} should be interpreted as, 
given the family $f_{\e, \mu}$ and the frequency $\omega$ 
finding $\mu_\e, K_\e$. For this work, the sense in which \eqref{inv} is meant to hold is the meaning of formal power series (the coefficients of $\e^n$ on both sides 
of \eqref{inv} are identical for all $n$, as it is customary in the study of Lindstedt series). 

Note that the equation \eqref{inv} is highly underdetermined. 
If $\mu_\e, K_\e$ is a solution,  changing $\theta$ into 
$\theta + \sigma_\e$, we obtain 
that $\mu_\e, \tilde K_\e$ is also a solution where 
$\tilde K_\e(\theta) = K_\e(\theta + \sigma_\e)$. 
This change of variables has the physical meaning of choosing 
a change of origins in the torus.

\subsection{Automatic reducibility} \label{sec_auto_red} 

As it is noted in \cite{Cal-Cel-Lla-13}, a very useful property of conformally symplectic systems is that solutions to equation \eqref{inv} satisfy the so-called \textit{automatic reducibility}, that is, in a neighborhood of an invariant torus, one can find a system of coordinates in which the linearization of the evolution has constant coefficients.

\begin{lemma}\label{lem_red}
Let $f_\mu:\M\longrightarrow\M$, such that, $f_\mu^*\Omega= \lambda \Omega$, and $K:\torus^d \longrightarrow \M$ such that $f_\mu\circ K(\t) = K(\t+\omega)$ with $\omega$ an irrational vector. If $\N= (DK^\top DK)^{-1}$, then, the $2d\times 2d$ matrix \begin{equation}\label{M_first}
    M(\t) = \left[DK(\t)| J^{-1}\circ K(\t) DK(\t)\N(\t)\right]
\end{equation}
satisfies \begin{equation}\label{red_eq_1}
    Df_\mu\circ K(\t)M(\t) = M(\t+\omega)\begin{pmatrix} \Id & S(\t) \\ 0 & \l\Id \end{pmatrix} 
\end{equation}
where $\Id\in \real^{d\times d} $ and  $S(\t)$ is an explicit algebraic expression involving $DK$, $Df_\mu$, $J\circ K$, and, $\N$. 
\end{lemma}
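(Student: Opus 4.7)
The plan is to verify the matrix identity column by column. The first $d$ columns of \eqref{red_eq_1} assert $Df_\mu \circ K(\theta) \cdot DK(\theta) = DK(\theta+\omega)$, which follows immediately by differentiating the invariance relation $f_\mu \circ K(\theta) = K(\theta+\omega)$ with respect to $\theta$; the bottom-left zero block is then automatic from this identification.

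The substantive work is the last $d$ columns, where one must produce the matrix $S(\theta)$. First I would translate the conformally symplectic condition $f_\mu^* \Omega = \lambda \Omega$ into the matrix identity $Df_\mu(x)^\top J(f_\mu(x)) Df_\mu(x) = \lambda J(x)$, and then rearrange it as
\[
Df_\mu(x) J^{-1}(x) = \lambda J^{-1}(f_\mu(x)) (Df_\mu(x)^\top)^{-1}.
\]
Evaluating at $x = K(\theta)$ rewrites the second column block of $Df_\mu\circ K(\theta)\,M(\theta)$ as $\lambda J^{-1} \circ K(\theta+\omega)\,(Df_\mu \circ K(\theta))^{-\top} DK(\theta) \mathcal{N}(\theta)$. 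So the claim reduces to showing that
\[
\lambda J^{-1}\circ K(\theta+\omega)\bigl[(Df_\mu\circ K(\theta))^{-\top} DK(\theta)\mathcal{N}(\theta) - DK(\theta+\omega)\mathcal{N}(\theta+\omega)\bigr]
\]
lies in the column span of $DK(\theta+\omega)$, and then reading off $S(\theta)$ via the left inverse $DK(\theta+\omega)^+ = \mathcal{N}(\theta+\omega)\,DK(\theta+\omega)^\top$, which produces the announced explicit algebraic expression.

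The key intermediate step, which I expect to be the main obstacle, is establishing the isotropy property $L(\theta) := DK(\theta)^\top J(K(\theta)) DK(\theta) \equiv 0$. I would obtain this by evaluating $L$ at $\theta+\omega$, substituting $DK(\theta+\omega) = Df_\mu(K(\theta)) DK(\theta)$, and applying the conformally symplectic identity, which yields the functional equation $L(\theta+\omega) = \lambda L(\theta)$. When $|\lambda|\neq 1$, iterating this relation together with the boundedness of the continuous periodic $L$ and the density of the orbit $\{n\omega\}_{n\in\integer}$ on $\torus^d$ forces $L\equiv 0$. Armed with isotropy, the range of $DK(\theta+\omega)$ is its own $J$-orthogonal (Lagrangian) complement, so tangency of the bracket above to the torus is equivalent to the bracket being Euclidean-orthogonal to $DK(\theta+\omega)$. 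This last condition is verified by a direct computation: transposing the differentiated invariance identity yields $DK(\theta+\omega)^\top (Df_\mu\circ K(\theta))^{-\top} = DK(\theta)^\top$, and the two terms in the bracket then collapse to $\mathcal{N}(\theta)^{-1}\mathcal{N}(\theta) - \mathcal{N}(\theta+\omega)^{-1}\mathcal{N}(\theta+\omega) = 0$. The explicit formula for $S(\theta)$ then follows at once by left-multiplying by $DK(\theta+\omega)^+$.
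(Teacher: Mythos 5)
Your approach matches the paper's (referenced) proof in its essential structure: the first $d$ columns come from differentiating the invariance equation, and the last $d$ columns are extracted by combining the conformally symplectic identity with the automatic isotropy of the torus; the linear-algebra step of first showing $J^{-1}\circ K(\theta+\omega)$ times your bracket lies in $\operatorname{Range}\bigl(DK(\theta+\omega)\bigr)$ and then reading off $S$ via the left inverse $\mathcal{N}(\theta+\omega)DK(\theta+\omega)^\top$ produces the same explicit $S$ as the expression \eqref{S_red} in the paper. The orthogonality computation $DK(\theta+\omega)^\top\bigl[\cdot\bigr]=0$ via $\mathcal{N}^{-1}\mathcal{N}-\mathcal{N}^{-1}\mathcal{N}=0$ is correct.

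There is, however, one genuine gap in the isotropy step, and it occurs in precisely the case the paper needs most. You derive $L(\theta+\omega)=\lambda L(\theta)$ and then conclude $L\equiv 0$ from iteration and density, but only under the hypothesis $|\lambda|\neq 1$. The lemma must also apply with $\lambda=1$: the normalization \eqref{normal} and the non-degeneracy condition \textbf{HND} of Theorem \ref{theo_prev_paper} are both formulated with $M_0$, the matrix \eqref{M_first} built from the symplectic ($\e=0$, $\lambda(0)=1$) torus $K_0$. When $\lambda=1$, density of $\{n\omega\}$ only gives that the continuous matrix $L$ is constant on $\torus^d$, not that it vanishes. To conclude $L\equiv 0$ one must invoke the hypothesis, made explicitly in the paper's setup, that $\Omega$ is an \emph{exact} symplectic form on $\M=\torus^d\times B$: writing $\Omega=d\alpha$ gives $K^*\Omega=d(K^*\alpha)$, so each entry of $L=K^*\Omega$ (in coordinates) has zero average over $\torus^d$; being constant, $L$ then vanishes identically. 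With this one line added, the argument is complete and covers all the cases the lemma is used for.
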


The proof of Lemma \ref{lem_red} is given in \cite{Cal-Cel-Lla-13}. The argument is as follows, taking derivative in equation \eqref{inv} one has $Df_\mu\circ K_0(\t) DK_0(\t) = DK_0(\t+\omega)$ which gives the first column in \eqref{red_eq_1}. The second column comes from the fact that  the conformally symplectic property, $f_\mu^*\Omega = \l\Omega$, implies that the invariant torus given by equation \eqref{inv} is Lagrangian. Then, using the conformally symplectic geometry the second column can be obtained.

\begin{remark}
As it is pointed out in \cite{Cal-Cel-Lla-13} if $K$ is an approximate solution of \eqref{inv}, that is, \begin{equation}\label{inv_error}
     f_\mu \circ K(\t) - K(\t+\omega) =: E(\t)
\end{equation}
the relation \eqref{red_eq_1} will hold with an error, $R$, that can be estimated in terms of the error, $E(\t)$, of the invariance equation, that is \begin{equation}\label{red_error_eq_1}
    Df_\mu\circ K(\t)M(\t) = M(\t+\omega)\begin{pmatrix} \Id & S(\t) \\ 0 & \l\Id \end{pmatrix} +R (\t), 
\end{equation} 
with 
\begin{align}
S (\t) & \equiv  P(\t+\omega)^\top Df\circ K(\t)J^{-1}\circ K (\t)P(\t) - \N (\theta +\omega)^\top\Gamma(\t+\omega) \N  (\t+\omega)\l \label{S_red} \\
P(\t)  & \equiv DK(\t)\N(\t), \nonumber \\
\Gamma(\t) & \equiv DK(\t)^\top J^{-1}\circ K(\t)DK(\t). \nonumber 
\end{align}
Moreover, 
\begin{equation} \label{red_error}
    R(\t)= \left[ DE(\t)\left | V(\t+\omega)(\Tilde{B}(\t) -\l\Id) +DK(\t+ \omega)(\Tilde{S}(\t) -S(\t)) \right]\right.
\end{equation}
where 
\begin{align}
      V(\t) &\equiv J^{-1}\circ K(\t) DK(\t)\N(\t) \label{V} \\
      \Tilde{B}(\t) -\l\Id&\equiv DK(\t)^\top J\circ K(\t)DK(\t)\Tilde{S}(\t) \\
      \Tilde{S}(\t) - S(\t) &\equiv -\N(\t+\omega)^\top \Gamma(\t+\omega)\N(\t+\omega) (\Tilde{B}(\t) - \l\Id) 
\end{align}
We note that $\Tilde{B}-\Id$ is estimated by the norm of \eqref{inv_error}, thus $R$ in \eqref{red_error} can be estimated by the norm of \eqref{inv_error} as it is shown in Lemma \ref{lem_estimate_reducibility}. The derivation of the formulas in \eqref{S_red}, \eqref{red_error}, and \eqref{V} can be found in \cite{Cal-Cel-Lla-13}.
\end{remark}

\begin{remark}
Observe that when considering $K_0$, $\mu_0$ satisfying \eqref{inv} and a perturbation $K_\e$, $\mu_\e$ (which could be given in terms of formal power series), equation \eqref{red_error_eq_1} is also satisfied by $K_\e$, $\mu_\e$ but with all the expressions depending on $\e$ (small enough), that is, $$ Df_{\mu_\e}\circ K_\e(\t)M_\e(\t) = M_\e(\t+\omega)\begin{pmatrix} \Id & S_\e(\t) \\ 0 & \l\Id \end{pmatrix} +R_\e (\t).$$

\end{remark}


\section{Statement of the main result, Theorem \ref{main_theo_dsm} }\label{sec_main_res}

In this section we state the main result, Theorem \ref{main_theo_dsm}, which gives the Gevrey character of the perturbative expansions of the solutions to equation \eqref{inv}. First we introduce  a normalization which guarantees the uniqueness of the solutions to equation \eqref{inv}.

\subsection{Normalization and local uniqueness}

The centerpiece of this work  is the invariance equation \begin{equation}\label{inv_2}
    f_{\e,\mu_\e}\circ K_\e = K_\e \circ T_\omega 
\end{equation}
where $T_\omega(\t)= \t+\omega$. Note that if $(K,\mu)$ is a solution of the invariant equation \eqref{inv_2}, then, for any $\sigma\in \torus^d$, $(K\circ T_\sigma, \mu )$ is also a solution of \eqref{inv_2}, due to the fact that $K\circ T_\sigma$ parameterizes the same torus as $K$. So, in order to get uniqueness it is neccesary to impose a normalization condition.

\begin{definition}
 We say that a torus with embedding $K$ is normalized with respect to $K_0$  when \begin{equation} \label{normal} \int_{\torus^d} \left[ M_0^{-1}(\t)(K(\t) -K_0(\t)) \right]_dd\t =0 
\end{equation}
 where the subscript $d$ indicates that we take the first $d$ rows of the $2d\times d$ matrix, and $M_0$ is constructed from $K_0$ as in \eqref{M_first}.
\end{definition}

We also recall the following result (\cite{Cal-Cel-Lla-13}, Proposition 26) which shows that this condition can be imposed without loss of generality for solutions that are close to one another.

\begin{proposition}\label{prop_giv_norm}
Let $K_0, K$ be solutions of \eqref{inv_2} and $||K -K_0||_{C^1}$ be sufficiently small (with respect to quatities depending only on $M$ -computed out of $K_0$ - and $f$). Then, there exists $\sigma\in\real^d$, such that $K^{(\sigma)}= K\circ T_\sigma $ satisfies \eqref{normal}. Furthermore, $$|\sigma |\leq C||K -K_0||_{C^1}$$ where the constant $C$  can be chosen to be as close to 1 as desired by assuming that $f_\mu$, $K_0$, and $K_1$ are twice differentiable, $DK_0^\top DK_1$ is invertible and $||K -K_0||_{C^0}$ is sufficiently small. The $\sigma$ thus chosen is locally unique.
\end{proposition}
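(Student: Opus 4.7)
My plan is a quantitative implicit function theorem in $d$ variables. Define the auxiliary map $F: U \subset \real^d \to \real^d$ by
\[
F(\sigma) := \int_{\torus^d} \left[ M_0^{-1}(\t)\bigl( K(\t + \sigma) - K_0(\t) \bigr) \right]_d \, d\t,
\]
where $U$ is a small ball around $0$. The conclusion of the Proposition is precisely that $F$ has a locally unique zero $\sigma$ close to the origin satisfying $|\sigma| \leq C \|K - K_0\|_{C^1}$. Clearly $|F(0)| \leq \|M_0^{-1}\|_{C^0}\, \|K - K_0\|_{C^0}$, so $F(0)$ is small when $K$ is $C^0$-close to $K_0$.

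The only nontrivial step is the computation of $DF$. Differentiating under the integral,
\[
DF(\sigma) = \int_{\torus^d} \left[ M_0^{-1}(\t)\, DK(\t+\sigma) \right]_d \, d\t.
\]
The key algebraic observation is that, by the definition \eqref{M_first}, the first $d$ columns of $M_0(\t)$ are exactly $DK_0(\t)$; hence
\[
M_0^{-1}(\t)\, DK_0(\t) = \begin{pmatrix} \Id \\ 0 \end{pmatrix}
\]
in $2d \times d$ block form, so that $[M_0^{-1}(\t)\, DK_0(\t)]_d \equiv \Id$. Thus if $K$ were equal to $K_0$ one would have $DF(0) = \Id$; for the actual $K$,
\[
DF(0) = \Id + \int_{\torus^d} \left[ M_0^{-1}(\t)(DK(\t) - DK_0(\t))\right]_d \, d\t,
\]
and the correction is $O(\|M_0^{-1}\|_{C^0}\, \|K - K_0\|_{C^1})$. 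Under the smallness hypothesis, $DF(0)$ is invertible and $DF(0)^{-1}$ can be made arbitrarily close to $\Id$.

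To close the argument I would run a standard Newton/Banach fixed-point argument on a small ball around $0$: the map $T(\sigma) := \sigma - DF(0)^{-1} F(\sigma)$ is a contraction with Lipschitz constant controlled by $\|DF(0)^{-1}\|\, \|D^2 F\|_{C^0}\, |\sigma|$ (this is where the $C^2$ hypothesis on $f_\mu$, $K_0$, $K$ enters, since $D^2 F$ involves $D^2 K$). Its unique fixed point $\sigma^*$ satisfies $F(\sigma^*) = 0$, and the standard Newton estimate
\[
|\sigma^*| \leq \|DF(0)^{-1}\|\cdot |F(0)|\cdot (1 + o(1))
\]
together with the bounds on $F(0)$ and $DF(0)^{-1}$ yields $|\sigma^*| \leq C \|K - K_0\|_{C^1}$; the constant $C$ can be tuned close to its ideal value by making $\|K - K_0\|_{C^0}$ small enough. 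Local uniqueness is built into the fixed-point argument.

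I do not foresee a real obstacle: the geometric heart of the result is the identity $[M_0^{-1} DK_0]_d = \Id$, which expresses exactly that the normalization \eqref{normal} is transverse to the $d$-parameter family of reparametrizations $K \mapsto K \circ T_\sigma$. Everything after that is textbook IFT, with only some mild bookkeeping to get the constant close to the advertised value.
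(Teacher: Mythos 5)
Your approach is correct and is the standard one; the paper itself does not give a proof but cites Proposition 26 of \cite{Cal-Cel-Lla-13}, and the implicit-function-theorem argument you give is precisely the natural (and almost certainly the intended) route. Your key algebraic observation — that the first $d$ block columns of $M_0(\t)$ are $DK_0(\t)$, so $\left[M_0^{-1}(\t)DK_0(\t)\right]_d = \Id$, hence $DF(0) = \Id + O(\|K-K_0\|_{C^1})$ — is exactly what makes the normalization transversal to the reparametrization action, and the Banach fixed-point argument around $\sigma=0$ then closes the existence and local uniqueness.

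One small caveat: the quantitative refinement ``$C$ can be chosen as close to $1$ as desired'' is not actually delivered by your estimate. What your argument produces is
\[
|\sigma^*| \leq \left(1+o(1)\right)\,\bigl\|\left[M_0^{-1}\right]_d\bigr\|_{C^0}\,\|K-K_0\|_{C^0},
\]
and the prefactor $\|[M_0^{-1}]_d\|_{C^0}$ is not close to $1$ in general; you correctly hedge by writing ``close to its ideal value'' rather than ``close to $1$.'' The statement in the paper (with its unexplained ``$K_1$'' and the invertibility of $DK_0^\top DK_1$) is itself imprecise on this point, so this is a defect of the proposition as quoted rather than of your proof. Beyond that, the invocation of $C^2$ to control $D^2F$ (and hence the contraction constant) is the correct place for the twice-differentiability hypothesis, and could even be replaced by uniform continuity of $DK$ since the torus is compact. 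Overall the argument is sound.
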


\begin{remark}\label{remark_uniq_ser}
As it is noted in \cite{Cal-Cel-Lla-13} the normalization \eqref{normal} works as well when  $K$ is only an approximate solution. Then, assuming that $K_0$ is a solution of equation \eqref{inv_2}, the normalization condition \eqref{normal} for an approximate solution of \eqref{inv_2} given as power series expansion $\sum_{n=0}^\infty K_n(\t)\e^n$ is equivalent to the conditions \begin{equation}
\int_{\torus^d}\left[ M_0^{-1}(\t)K_n(\t)\right]_dd\t = 0 \label{normal_coeff}
\end{equation} for all  $n\geq 1$.

\end{remark}

\subsection{Main Theorem}

Here we present our main theorem, Theorem \ref{main_theo_dsm}. 

\begin{theorem}[Main Theorem]\label{main_theo_dsm}
Let $\omega\in\D(\nu,\tau) $. Consider the map $f:\torus\times \real \rightarrow \torus\times \real$ given by
 \begin{equation}\label{dsm_theo}
    f_{\e,\mu_\e}(x,y)= (x+\l(\e)y + \mu_\e -\e V'(x) , \l(\e)y + \mu_\e -\e V'(x)) 
\end{equation}
where $\l(\e)=1-\e^\alpha$, $\alpha\in\nat$, $V(x)$ is a trigonometric polynomial, $\mu_\e \in\complex$, and $\e\in\complex$. Then, there exists $\r_0>0$ such that the following holds
\begin{itemize}
\item[(A)] There exist  formal power series expansions $K^{[\infty]}_\e= \sum_{j=0}^\infty K_j\e^j$ and $\mu_\e^{[\infty]}=\sum_{j=0}^\infty \mu_j\e^j $ satisfying $f_{\e,\mu}\circ K =K(\t +\omega)$ in the sense of formal power series. More precisely, defining $\expan{K_\e} N =\sum_{j=0}^N K_j\e^j$ and $\expan{\mu_\e} N=\sum_{j=0}^N \mu_j\e^j$ for any $N\in \nat$ we have \begin{equation}\label{norm_error_0}
\normm{f_{\e,\expan {\mu_\e} N}\circ\expan {K_\e}N -\expan {K_\e}N\circ T_\omega}{\r_0}\leq C_N|\e|^{N+1}.
\end{equation}
where $C_N >0$. Moreover, if the $K_j$'s satisfy the normalization condition \eqref{normal_coeff}, then the expansions $K_\e^{[\infty]}$, $\mu_\e^{[\infty]}$ are unique. 
\item[(B)]
The unique formal power series expansions, $K^{[\infty]}_\e$ and $\mu^{[\infty]}_\e$,  satisfying \eqref{norm_error_0} and the normalization \eqref{normal_coeff}
are such that $K^{[\infty]}\in \G_{\r_0}^{2\tau/\alpha}$ and $\mu^{[\infty]}\in \G^{2\tau/\alpha}$, i.e., there exists constants $L$, $F$, $N_0$ such that \begin{equation} \normm {K_n}{\r_0}\leq LF^nn^{(2\tau/\alpha)n}\quad\mbox{and} \quad \left|{\mu_n}\right|\leq LF^nn^{(2\tau/\alpha)n}\qquad  \mbox{for any }n> N_0. 
\end{equation}
\end{itemize}
\end{theorem}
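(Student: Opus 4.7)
The plan for Part (A) is to match coefficients of $\e^n$ order by order in the invariance equation \eqref{inv_2}, following \cite{Cal-Cel-Lla-16}. At order zero, $K_0(\t)=(\t,0)$ and $\mu_0=\omega$ solve the equation since $\l(0)=1$. At each order $n\geq 1$, the automatic reducibility of Lemma \ref{lem_red} reduces the linear equation for $(K_n,\mu_n)$ to two cohomological equations of the form $\xi(\t+\omega)-\xi(\t)=\eta(\t)$; these are solved by the Diophantine condition, with $\mu_n$ absorbing the mean-value obstruction and the normalization \eqref{normal_coeff} selecting $K_n$ uniquely.

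For Part (B), the plan is a quasi-Newton scheme on polynomial truncations in $\e$. Set $N_n:=2^n$ and let $K^{(n)}:=\expan{K_\e}{N_n}$, $\mu^{(n)}:=\expan{\mu_\e}{N_n}$, viewed as elements of $\Aa{\r_n}{\g_n}$ for scales $\r_n,\g_n$ to be chosen. The error $E^{(n)}:=f_{\e,\mu^{(n)}}\circ K^{(n)}-K^{(n)}\circ T_\omega$ is formally of order $\e^{N_n+1}$. A Newton step produces a correction $(\Delta K^{(n)},\Delta\mu^{(n)})$ of order $\e^{N_n+1}$ by solving the linearized invariance equation through Lemma \ref{lem_red}, which yields two cohomological equations: a standard one with divisor $e^{2\pi i k\cdot\omega}-1$ and a \emph{dissipative} one with divisor $e^{2\pi i k\cdot\omega}-\l(\e)$, whose zero Fourier mode is exactly $\e^\alpha$. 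The trigonometric-polynomial hypothesis on $V$ ensures that $V'\circ K^{(n)}$ remains a polynomial in $\e$ of bounded Fourier support in $\t$; consequently, corrections stay polynomial, the first $N_n$ Taylor coefficients of $K_\e$ are frozen after step $n$, and one obtains quadratic convergence at the level of power-series orders, $E^{(n+1)}=O(\e^{N_{n+1}+1})$.

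The quantitative backbone is the inductive estimate, obtained by combining the Cauchy-in-$\e$ bound (Lemma \ref{cauchy_epsilon} and Corollary \ref{lem_cauchy_est_ser}) with standard cohomological-equation estimates:
\beqno
\norm{\Delta K^{(n)}}{\r_{n+1}}{\g_{n+1}} \leq C(\d_n^{-\tau}\g_n^{-\alpha})^2\,\norm{E^{(n)}}{\r_n}{\g_n},
\eeqno
where $\d_n=\r_n-\r_{n+1}$ and the $\g_n^{-\alpha}$ factor arises because the pointwise dissipative divisor $\e^{-\alpha}$ is controlled uniformly on $|\e|\leq\g_n$ by $\g_n^{-\alpha}$. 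For a given $N$, take $n=\lceil\log_2 N\rceil$; the coefficient $K_N$ is final after step $n$, and a Cauchy estimate in $\e$ gives $\normm{K_N}{\r_n}\leq\g_n^{-N}\norm{K^{(n)}}{\r_n}{\g_n}$. Balancing the $\g_n^{-N}$ factor against the accumulated losses $\prod_{j<n}(\d_j^{-\tau}\g_j^{-\alpha})^2$ and optimizing $\g_n$ yields the Gevrey exponent $2\tau/\alpha$; the analogous bookkeeping produces the estimate for $\mu_N$.

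The main obstacle will be the precise balance between the two small-divisor effects: the dissipative factor $\g_n^{-\alpha}$ pushes $\g_n$ down while the Cauchy extraction factor $\g_n^{-N}$ pushes $\g_n$ up, and the optimum is exactly what delivers the Gevrey class $2\tau/\alpha$ rather than a larger number. Verifying that this optimum is consistent with the convergence of the quasi-Newton scheme, i.e., that the inductive norms do not blow up over the $\log_2 N$ steps needed to freeze the $N$th coefficient, is the technical heart of the argument; the trigonometric-polynomial assumption enters crucially both in keeping corrections genuinely polynomial in $\e$ (hence freezing low-order coefficients) and in preventing uncontrolled growth of Fourier supports through the iteration.
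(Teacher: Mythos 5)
Your overall architecture matches the paper: Part (A) via order-by-order matching / the existence result of \cite{Cal-Cel-Lla-16}, and Part (B) via a quasi-Newton scheme on $\e$-polynomial truncations, doubling the degree at each step, losing domain in both $\t$ and $\e$, and extracting the $n$-th coefficient by a Cauchy estimate in $\e$. However, the heart of your argument---the source of the $\e$-domain shrinkage and the treatment of the zero Fourier mode of the dissipative cohomology equation---is wrong.

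You write that ``the $\g_n^{-\alpha}$ factor arises because the pointwise dissipative divisor $\e^{-\alpha}$ is controlled uniformly on $|\e|\leq\g_n$ by $\g_n^{-\alpha}$.'' This cannot be right: on the disk $|\e|\leq\g_n$ the quantity $\e^{-\alpha}$ is unbounded, so there is no such uniform control, and dividing the $k=0$ Fourier mode by $\l(\e)-1=-\e^\alpha$ would produce a singular object. The paper does not perform that division. The $k=0$ mode, i.e.\ the average $\overline{W_{\e,2}}$, is determined \emph{together with} the drift increment $\sigma_\e$ by the coupled linear system \eqref{w2barsigma}, whose matrix carries an $\e^\alpha\Id$ block yet is still invertible uniformly near $\e=0$ by the non-degeneracy hypothesis \textbf{HND}. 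The counterterm $\mu$ eliminates the would-be $\e^{-\alpha}$ singularity---the same mechanism you correctly invoke in Part (A) (``$\mu_n$ absorbing the mean-value obstruction'') but then drop in Part (B).

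The correct origin of the constraint $\g_N\sim N^{-\tau/\alpha}$, hence of the Gevrey exponent, is Lemma \ref{no-sdiv}: to bound the \emph{nonzero}-mode divisors $|\l(\e)-e^{2\pi ik\cdot\omega}|^{-1}$ uniformly for $0<|k|\leq aN$ and $|\e|\leq\g_N$ one needs $|\e|^\alpha\leq \nu/(2(aN)^\tau)$, i.e.\ $\g_N=(\nu/2)^{1/\alpha}(aN)^{-\tau/\alpha}$. This is exactly where the trigonometric-polynomial hypothesis is essential, and it is stronger than ``bounded Fourier support'': the support grows \emph{linearly} with $N$, and that linear growth enters the exponent. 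With this choice $\g_N^{-\alpha}\sim(aN)^\tau$, so the per-step estimate in Corollary~\ref{lem_est_trunc_ser} carries a polynomial factor $(aN)^{2\tau}$ numerically comparable to your $\g_n^{-2\alpha}$---but it has nothing to do with dividing by $\e^\alpha$. Finally, the ``balancing'' you sketch does not on its own produce the stated exponent: the accumulated polynomial losses $\prod_{j\leq h}(aN_j)^{2\tau}\sim 2^{O(h^2)}$ are subexponential in $N\sim 2^h N_0$, and the factorial-type growth comes entirely from the Cauchy extraction $\tilde{\g}_{h+1}^{-n}$. The factor $2$ in $2\tau/\alpha$ is produced in the paper by the (deliberately non-optimized) bound $\tilde{\g}_{h+1}^{-n}\leq\tilde{\g}_{h+1}^{-2^{h+1}N_0}$ for $n\in(2^hN_0,2^{h+1}N_0]$; without this bookkeeping your plan does not account for it. (A small slip: the order-zero data for \eqref{dsm_theo} are $K_0(\t)=(\t,\omega)$ and $\mu_0=0$, not $K_0(\t)=(\t,0)$, $\mu_0=\omega$; the drift first appears at order $\e^\alpha$.)
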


\smallskip
The proof of  Theorem \ref{main_theo_dsm} is an easy consequence of Lemma \ref{main_theo}. Proposition \ref{hyp-satisfy}, given in the Appendix, shows the hypothesis of Lemma \ref{main_theo} are satisfied for maps of the form \eqref{dsm_theo}. Lemma \ref{main_theo} states the same results as Theorem \ref{main_theo_dsm} but in a more general setting.

\begin{remark}
It is instructive to compare the results in Theorem \ref{main_theo_dsm} with 
the numerical explorations of \cite{Bus-Cal-19} (see also \cite{Bus-Cal-20}). In the case that $\lambda(\e) = 1 - \e^3$ and $\omega$ is the golden mean, Theorem \ref{main_theo_dsm} gives that the expansion satisfies the Gevrey bounds with exponent 2/3. Of course, Theorem \ref{main_theo_dsm} gives only an upper bound and lower exponents could also be true. 
The  numerical results in \cite{Bus-Cal-19} and  \cite{Bus-Cal-20} lead to the conjecture that the expansion $\sum K_n\e^n$ has some well defined asymptotics
\begin{equation}\label{asymptotic-uk} 
\|K_n\|_\rho^{1/n} \approx C n^\sigma
\end{equation} 
with a slightly smaller Gevery exponent, $\sigma \approx 0.3$. 
The asymptotics \eqref{asymptotic-uk} is compatible with the results in Theorem \ref{main_theo_dsm}, but 
suggests that the results in Theorem \ref{main_theo_dsm} are not optimal. We call attention that \cite{Bus-Cal-19} contained an unfortunate typo and the results attributed there to $\|K_n\|^{1/n}$ are actually results for $ \|n! K_n\|^{1/n} $, this is corrected in \cite{Bus-Cal-20}. The paper \cite{Bus-Cal-20} also presents several other patterns in the series (refined versions of \eqref{asymptotic-uk} including oscillations of period $3$, studies for other Diophantine numbers, etc.)  We hope that the method presented in this paper can lead to studies of these phenomena, hitherto discovered only through numerical implementation. 

We think that the argument in Theorem \ref{main_theo_dsm} can optimized to lower the Gevrey exponent and get closer to the numerical values, but, since the method of proof is rather novel, we decided to follow the advice \emph{``Premature optimization is the root of all evil''} \cite{Knuth}, and present the argument in its simplest form so that 
it could, perhaps, be applied to other problems. \\ 
\end{remark}

For the sake of completeness, before stating the main Lemma we will state a Theorem in \cite{Cal-Cel-Lla-16} which assures the existence of formal power series expansions satisfying \eqref{inv_2} up to any order for conformally symplectic systems.

\begin{theorem}[\cite{Cal-Cel-Lla-16}, Theorem 12] \label{theo_prev_paper}
Let $\M\equiv \torus^d \times\B$ with $B\subseteq \real^d$ an open, simply connected domain with smooth boundary; $\M$ is endowed with an analytic symplectic form $\Omega$.

Let $\omega\in \D(\tau,\nu)$ and consider a family $f_{\e,\mu}$ of conformally symplectic mappings that satisfy \begin{equation}
f_{\e,\mu}^*\Omega =\l(\e)\Omega,
\end{equation} with $\mu \in \Lambda, \Lambda\subseteq \complex^d$, $\l(\e)=1-\e^\alpha$, $\alpha\in\nat$ and $\e\in\complex$.

Assume that for $\e=0$ the family of maps $f_{0,\mu}$ is symplectic and that for some value $\mu_0$ the map $f_{0,\mu_0}$ admits a Lagrangian invariant torus, namely we can find an analytic embedding  $K_0\in \A_\r(\torus^d, \M)$, for some $\r>0$, such that \begin{equation} \label{inv_eq}
f_{0, \mu_0}\circ K_0 =K_0\circ T_\omega.
\end{equation}
Furthermore, assume that the torus $K_0$ satisfies the following hypothesis:

\textbf{\emph{HND}} Let the following non-degeneracy condition be satisfied: $$\det\left(\begin{matrix}
\overline{S_0} &\,& \overline{S_0\noaverage{B_{0b}}} +\overline{\tilde{A}_{01}} \\ 0 &\, & \overline{\tilde{A}_{02}} \end{matrix}\right) \neq 0$$ where the $d\times d$ matrix $S_0$ is defined as 
\begin{align*}S_0(\t) &\equiv \N_0(\t+\omega)^T DK_0(\t+\omega) Df_{\mu_0,0} \circ K_0(\t) J^{-1}\circ K_0(\t)DK_0(\t)\N_0(\t) \\
	& -\N_0(\t+\omega)^TDK_0(\t+\omega)^T J^{-1}\circ K_0(\t +\omega)DK_0(\t +\omega)\N_0(\t +\omega)
\end{align*}
with $\N=(DK_0^TDK_0)^{-1}$, the $d\times d$ matrices $\tilde{A}_{01}, \tilde{A}_{02}$ denote the first $d $ and the last $d$ rows of the $2d\times d$ matrix $\tilde{A_0}= \left(M_0 \circ T_\omega\right)^{-1}\left(D_\mu f_{0, \mu_0}\circ K_0\right) $, where $M_0$ is as in \eqref{M_first}, $\noaverage{B_{0b}}$ is the solution (with zero average) of the cohomology equation $\noaverage{B_{0b}} -B_{0b}\circ T_\omega = -\noaverage{\tilde{A}_{02}}$,  where $\noaverage{B_{0b}}\equiv B_{0b} - \overline{B_{0b}}$ and the overline denotes the average.
\\

Then, we have the following
\begin{itemize}
\item[(A)] There exist a formal power series expansions $K^{[\infty]}_\e= \sum_{j=0}^\infty K_j\e^j$ and $\mu_\e^{[\infty]}=\sum_{j=0}^\infty \mu_j\e^j $ satisfying \eqref{inv_eq} in the sense of formal power series. More precisely, defining $\expan{K_\e} N =\sum_{j=0}^N K_j\e^j$ and $\expan{\mu_\e} N=\sum_{j=0}^N \mu_j\e^j$ for any $N\in \nat$ and $\r >0 $, we have \begin{equation}\label{norm_error}
\normm{f_{\e,\expan {\mu_\e} N}\circ\expan {K_\e}N -\expan {K_\e}N\circ T_\omega}{\r'}\leq C_N|\e|^{N+1}.
\end{equation}
for some $0<\r'<\r$ and $C_N >0$. 

Moreover, if we require the  $K_j$'s satisfy the normalization condition \eqref{normal_coeff}, then the expansions $K_\e^{[\infty]}$, $\mu_\e^{[\infty]}$ are unique.
\end{itemize}
\end{theorem}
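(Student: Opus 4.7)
The plan is to construct $K_n$ and $\mu_n$ recursively in $n$ by matching coefficients of $\e^n$ on the two sides of the invariance equation $f_{\e,\mu_\e}\circ K_\e = K_\e\circ T_\omega$. Order zero is the assumed invariance $f_{0,\mu_0}\circ K_0 = K_0\circ T_\omega$. For $n\geq 1$, extracting the coefficient of $\e^n$ and using the chain rule produces the linearized equation
\beqno
Df_{0,\mu_0}\circ K_0(\t)\,K_n(\t) - K_n(\t+\omega) + D_\mu f_{0,\mu_0}\circ K_0(\t)\,\mu_n = -R_n(\t),
\eeqno
in which $R_n$ is an explicit polynomial expression in the previously constructed $K_j,\mu_j$ ($j<n$) and in finitely many derivatives of $f_{\e,\mu}$ at $(0,\mu_0)$, hence known and analytic on some strip by induction.

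To solve the linearized equation I would invoke automatic reducibility (Lemma~\ref{lem_red}) at $\e=0$, where $\l(0)=1$. Setting $K_n(\t)=M_0(\t)W_n(\t)$ with $M_0$ the $2d\times 2d$ matrix \eqref{M_first} built from $K_0$, multiplying on the left by $M_0(\t+\omega)^{-1}$, and using \eqref{red_eq_1} with $\l=1$, the linearized equation becomes the triangular cohomology system
\begin{align*}
W_{n,a}(\t+\omega)-W_{n,a}(\t) &= S_0(\t)W_{n,b}(\t)+\tilde A_{01}(\t)\mu_n+\tilde R_{n,a}(\t),\\
W_{n,b}(\t+\omega)-W_{n,b}(\t) &= \tilde A_{02}(\t)\mu_n+\tilde R_{n,b}(\t),
\end{align*}
where $\tilde A_0=(M_0\circ T_\omega)^{-1}D_\mu f_{0,\mu_0}\circ K_0$, $\tilde R_n=(M_0\circ T_\omega)^{-1}R_n$, and the subscripts $a,b$ denote the top-$d$ and bottom-$d$ blocks.

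A cohomology equation $u(\t+\omega)-u(\t)=g(\t)$ admits a zero-average solution precisely when $\overline g=0$, and under $\omega\in\D(\nu,\tau)$ Fourier estimates bound the solution on a slightly smaller strip with a factor $\nu^{-1}\delta^{-\tau}$. The second block forces the obstruction $\overline{\tilde A_{02}}\mu_n+\overline{\tilde R_{n,b}}=0$; once satisfied, $\noaverage{W_{n,b}}$ is determined and, by linearity, $\noaverage{W_{n,b}}=\noaverage{B_{0b}}\mu_n+h_n$ where $\noaverage{B_{0b}}$ is the operator defined in HND and $h_n$ is known. Inserting this into the first-block obstruction $\overline{S_0 W_{n,b}}+\overline{\tilde A_{01}}\mu_n+\overline{\tilde R_{n,a}}=0$ and splitting $W_{n,b}=\noaverage{W_{n,b}}+\overline{W_{n,b}}$ produces the $2d\times 2d$ linear system
\beqno
\begin{pmatrix}\overline{S_0} & \overline{S_0\noaverage{B_{0b}}}+\overline{\tilde A_{01}}\\ 0 & \overline{\tilde A_{02}}\end{pmatrix}\begin{pmatrix}\overline{W_{n,b}}\\ \mu_n\end{pmatrix}=\begin{pmatrix}-\overline{S_0 h_n}-\overline{\tilde R_{n,a}}\\ -\overline{\tilde R_{n,b}}\end{pmatrix}
\eeqno
for $(\overline{W_{n,b}},\mu_n)$, whose coefficient matrix is exactly the one appearing in HND; invertibility by hypothesis HND determines $\mu_n$ and $\overline{W_{n,b}}$ uniquely. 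Then $\noaverage{W_{n,b}}$ and $\noaverage{W_{n,a}}$ are recovered from the cohomology equations and $\overline{W_{n,a}}$ is pinned down by the normalization \eqref{normal_coeff}.

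The main obstacle is preserving a common analyticity strip throughout the recursion. At each step the small-divisor solution loses $\delta_n$ of analyticity, so I would fix a summable schedule, e.g.\ $\delta_n=\r/(2n^2)$, and set $\r':=\r-\sum_{n\geq 1}\delta_n>0$, obtaining $K_n\in\A_{\r'}$ with norms that may grow rapidly in $n$ (no Gevrey bound is claimed at this stage). The estimate \eqref{norm_error} then follows by substituting the truncations $\expan{K_\e}{N},\expan{\mu_\e}{N}$ into $f$ and applying Taylor's theorem: the order-by-order cancellations force the residual to be $\ord{N+1}$ with an $N$-dependent constant $C_N$. Uniqueness under \eqref{normal_coeff} is immediate from the recursion, since HND makes the order-$n$ linear system uniquely solvable and the normalization eliminates the remaining translational freedom in $\overline{W_{n,a}}$, so the sequence $(K_n,\mu_n)$ is determined without ambiguity.
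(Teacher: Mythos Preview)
Your proposal is correct and follows the standard order-by-order Lindstedt construction that the cited reference \cite{Cal-Cel-Lla-16} uses; note that the present paper does not itself prove this statement but merely quotes it, so there is no independent proof here to compare against. The only minor points to tighten are bookkeeping: at each order you solve two cohomology equations (first for $\noaverage{W_{n,b}}$, then for $\noaverage{W_{n,a}}$ whose right-hand side already lives on the reduced strip), so the loss per step is $2\delta_n$ rather than $\delta_n$, and your summable schedule should be chosen accordingly; also, the paper's later quasi-Newton method (Section~\ref{sec_new_step}) is \emph{not} the proof of this theorem but a separate quadratically convergent scheme acting on truncated series, designed to extract Gevrey bounds rather than mere existence of the formal expansion.
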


Note that Theorem~\ref{theo_prev_paper} does not assume that the case $\e = 0$ is an 
integrable system, as it is the case for the map \eqref{dsm_theo}, it suffices that the case $\e = 0$ is a Hamiltonian system with a KAM torus. 

\begin{remark}
Denoting 
    \begin{equation}\label{E^N}
    E_\e^N(\t)\equiv f_{\expan {\e,\mu_\e} N}\circ\expan {K_\e}N(\t) -\expan {K_\e}N(\t+\omega)
    \end{equation} then \eqref{norm_error} can be written as \begin{equation}\label{norm_E^N}
    \normm{E^N_\e}{\r'}\leq C_N|\e|^{N+1}.
    \end{equation} 
According to the notation introduced earlier, this means that $E^N_\e\sim \ord{N+1}$ or $E^N_\e =\sum_{j=N+1}^\infty E_j\e^j$ for $\e$ small enough. We denote $$\expanin{E}N{2N}_\e =\sum_{j=N+1}^{2N}E_j\e^j$$ the trucated series.
\end{remark}
The following lemma, Lemma \ref{main_theo}, can be considered as an improvement of Theorem \ref{theo_prev_paper} in the sense that it gives Gevrey bounds for the coefficients $K_j, \mu_j$ of the unique (under normalization) formal power series expansions $K_\e^{[\infty]}, \mu_\e^{[\infty]}$.

\begin{lemma}[Main Lemma]\label{main_theo}
Assume the hypothesis of Theorem \ref{theo_prev_paper}. Assume also that for any $\e$, small enough, and for any $N\in \nat$ we have:
\begin{itemize}
    \item[\emph{\textbf{HTP1}}]   $\expanin{\tilde{E}_{\e,2}}N{2N}$, $\tilde{A_{\e,2}^N}$ are trigonometric polynomials in $\t$ of degree at most $aN$, $a\in\nat$. Where $\expanin{\tilde{E}_{\e,2}}N{2N}$, $\tilde{A_{\e,2}^N}$ denote the $d\times 1$ and $d\times d$ matrices, respectively, given by taking the last $d$  rows of the $2d\times 1$ matrix  $\expanin{\tilde{E}_\e}N{2N}=\left(\expan {M_\e}N\circ T_\omega\right)^{-1} \expanin{E_\e} N{2N}$ and the $2d\times d$ matrix $ \tilde{A}_\e^N = \left(\expan {M_\e}N\circ T_\omega\right)^{-1} D_\mu f_{\e,\expan \mu N}\circ \expan {K_\e}N $, respectively. $\expan{M_\e}N $ is as in \eqref{M_first} constructed from $\expan{K_\e}N$.

    \item [\emph{\textbf{HTP2}}]
    The $d\times d$  matrix 
    \begin{multline}
    \tilde{E}^N_{\Omega,\e}(\theta)\equiv D\expan {K_\e} N (\t+\omega)^\top J\circ \expan {K_\e} N(\t+\omega) D\expan {K_\e} N (\t+\omega) \\ 
			- D(f_{\e,\expan\mu N}\circ \expan {K_\e} N(\t))^\top J\circ (f_{\e,\expan \mu N}\circ \expan {K_\e} N(\t)) D(f_{\e,\expan\mu N}\circ \expan {K_\e} N(\t))
    \end{multline}
is a trigonometric  polynomial of degree at most $aN$.
\end{itemize}

Then, there exist $\r_0\leq \r'$ such that the unique formal power series expansions, $K^{[\infty]}_\e$ and $\mu^{[\infty]}_\e$,  satisfying \eqref{norm_error} and \eqref{normal_coeff} are such that $K^{[\infty]}\in \G_{\r_0}^{2\tau/\alpha}$ and $\mu^{[\infty]}\in \G^{2\tau/\alpha}$, i.e., there exists constants $L$, $F$, $N_0$ such that \begin{equation} \normm {K_n}{\r_0}\leq LF^nn^{(2\tau/\alpha)n}\quad\mbox{and} \quad \left|{\mu_n}\right|\leq LF^nn^{(2\tau/\alpha)n}\qquad  \mbox{for any }n> N_0. 
\end{equation}
\end{lemma}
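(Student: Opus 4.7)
My plan is to construct the formal series $K^{[\infty]}_\e$, $\mu^{[\infty]}_\e$ by generating a sequence of approximate solutions $(\expan{K_\e}{N_k},\expan{\mu_\e}{N_k})$ via a quadratic Newton iteration in the space of truncated power series, and then to read off Gevrey bounds on the coefficients by applying a Cauchy estimate in $\e$ at a well-chosen iterate. Set $N_k=2^k N_0$ and take as initial data the approximate solution supplied by Theorem~\ref{theo_prev_paper} on some domain $\Aa{\r_0}{\g_0}$ with invariance error $E^{N_0}_\e\sim\ord{N_0+1}$.

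At each step, given $(\expan{K_\e}{N_k},\expan{\mu_\e}{N_k})$ with error $E^{N_k}_\e\sim\ord{N_k+1}$, I use the approximate reducibility \eqref{red_error_eq_1} to reduce the linearized invariance equation to two cohomology equations, schematically $\eta_1-\eta_1\circ T_\omega=\tilde E_1+S\eta_2+\tilde A_1\Delta\mu$ and $\eta_2-\l(\e)\eta_2\circ T_\omega=\tilde E_2+\tilde A_2\Delta\mu$. The drift correction $\Delta\mu$ is chosen so that the averages of both right-hand sides satisfy the solvability conditions; this is possible thanks to the non-degeneracy HND and its block-triangular structure. The correction $\Delta K_\e=\expan{M_\e}{N_k}\eta_\e$ is added to $\expan{K_\e}{N_k}$ and the sum is truncated at order $2N_k$, producing $(\expan{K_\e}{N_{k+1}},\expan{\mu_\e}{N_{k+1}})$; quadratic convergence of the scheme then yields a new error of order $\ord{N_{k+1}+1}$, up to the $\e^{-\alpha}$ order-shift discussed below.

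Hypotheses HTP1 and HTP2 are essential in the one-step estimates: when restricted to the band $(N_k,2N_k]$, the right-hand sides $\tilde E_1,\tilde E_2$ and the coefficient $S$ are trigonometric polynomials in $\t$ of degree at most $aN_k$. Consequently, solving each cohomology equation loses only a polynomial factor $(aN_k)^\tau/\nu$ from the Diophantine bound, and no shrinkage of the $\t$-strip $\r$ is required. Only the $\e$-radius shrinks, from $\g_k$ to $\g_{k+1}=\g_k/s$ for some $s>1$, with Corollary~\ref{lem_cauchy_est_ser} controlling the truncation remainder. Because subsequent Newton steps modify only coefficients of order strictly greater than $N_k$, the coefficient $K_n$ is fixed once $N_k\ge n$. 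Choosing $k=\lceil\log_2(n/N_0)\rceil$ and applying Lemma~\ref{cauchy_epsilon} gives $\normm{K_n}{\r_0}\leq \g_k^{-n}\norm{\expan{K_\e}{N_k}}{\r_0}{\g_k}$; calibrating the recursion so that $\norm{\expan{K_\e}{N_k}}{\r_0}{\g_k}$ stays uniformly bounded forces $\g_k=\g_0\cdot 2^{-(2\tau/\alpha)k}$, and substituting $k=\log_2 n$ produces $\g_k^{-n}\leq\g_0^{-n}\cdot n^{(2\tau/\alpha)n}$, which is the advertised Gevrey bound. The estimate for $\mu_n$ follows analogously from the size of $\Delta\mu$ at each step.

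The main obstacle is calibrating the $\e$-shrinkage rate $s$ against the per-step losses so that the Gevrey exponent $2\tau/\alpha$ emerges optimally. The numerator $2\tau$ reflects the two cohomology equations solved per Newton step, each losing $(aN_k)^\tau/\nu$ (polynomial in $N_k$ thanks to HTP1--HTP2 rather than exponential in a would-be $\t$-shrinkage). The denominator $\alpha$ reflects the singular nature of the perturbation: the $k=0$ Fourier mode of the $\l$-cohomology is divided by $1-\l(\e)=\e^\alpha$, and, after absorption into $\Delta\mu$ via HND, this trades $\alpha$ orders of $\e$ per Newton step. A secondary difficulty, which motivates the trigonometric polynomial hypotheses, is to hold the $\t$-analyticity strip fixed throughout the iteration; this is precisely what HTP1--HTP2 guarantee and is the principal reason these hypotheses cannot yet be removed.
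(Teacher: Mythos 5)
Your overall architecture matches the paper's: a quasi-Newton method acting on truncated power series with dyadic doubling $N_k=2^kN_0$, where HTP1/HTP2 are used to make the $\lambda$-cohomology equation solvable on a \emph{ball} $|\e|\le\g_N$ (via Lemma~\ref{no-sdiv}) rather than on the excluded-sector set $\G$ of \cite{Cal-Cel-Lla-13}, and the coefficient $K_n$ is read off via Cauchy at the first iterate that fixes it. That is the paper's strategy. However, several of your supporting claims are not what actually happens, and two of them are arithmetically off.

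First, you claim that with HTP1/HTP2 ``no shrinkage of the $\t$-strip $\r$ is required.'' That is not quite right: HTP1 improves the $\l$-cohomology bound in Lemma~\ref{lin-est} to $C\nu^{-1}(aN)^\tau\d^{-d}$, but the equation for $\noaverage{W_{\e,1}}$ is the \emph{standard} cohomology equation, solved with Lemma~\ref{classic_lin_est}, which still costs $\nu^{-1}\d^{-(\tau+d)}$. The $\t$-strip does shrink at every step; what the paper does (see \eqref{delta_h}) is take $\d_h=\r_0/2^{h+2}$ so $\r_h\ge\r_0/2$ stays bounded below. This is ordinary KAM bookkeeping, not a consequence of HTP1/HTP2.

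Second, your claimed $\e$-shrinkage rate $\g_k=\g_0\cdot2^{-(2\tau/\alpha)k}$ is double the actual rate. The rate is forced by Lemma~\ref{no-sdiv}: to control the small divisors for $|k|\le aN_h$ one needs $|\e|\le(\nu/2)^{1/\alpha}(aN_h)^{-\tau/\alpha}$, hence $\tilde\g_{h+1}=2^{-\tau/\alpha}\tilde\g_h$ (see \eqref{gamma_h_2}). This is a hard constraint, not something one ``calibrates'' to keep $\normm{K^{[\le N_k]}}{\cdot}$ bounded — indeed the corrections $d_h$ go to zero superexponentially, so boundedness is automatic at any shrinkage rate $\ge2^{-\tau/\alpha}$. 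The factor of $2$ in the exponent $2\tau/\alpha$ is \emph{not} due to two cohomology equations per step or a doubled shrinkage rate; it enters in the final Cauchy estimate, where the paper bounds $(\tilde\g_{h+1})^{-n}$ by $(\tilde\g_{h+1})^{-2^{h+1}N_0}$ for $n\in(2^hN_0,2^{h+1}N_0]$, and in the subsequent combination with $d_h\lesssim r^{(2^{h+1}-1)N_0}$ only one power of $(h+1)$ is cancelled, leaving a $r^{-h\cdot2^{h+1}N_0}$ term. The paper itself remarks this exponent is likely not optimal. Your proposal gets the right final number but via reasoning that does not reflect how the factor actually arises; if one tried to implement your calibration literally, one would be over-shrinking the $\e$-domain without justification.

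Finally, your outline omits the inductive control of the constants $C_h$ (the paper's Lemma~\ref{induction}, properties $(p1;h)$--$(p3;h)$), which is needed to guarantee the quasi-Newton step stays well defined: the norms of $M_h$, $M_h^{-1}$, $\N_h$, $DK_h$, $\T_h$ must stay comparable to their initial values throughout the iteration, and the smallness condition on $e_0$ (equivalently, the choice of $N_0$) must be made explicit. That step is essential to turn the one-step estimates into a valid iteration and then into the Cauchy conclusion.
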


\bigskip
The proof of Lemma \ref{main_theo}, given in Section \ref{proof-main-lemma}, is done by means of a Newton like method which acts on finite powers series expansions ($\expan{K_\e}N$, $\expan{\mu_\e}N$), this method is described in the next section. We emphasize that this quasi Newton method takes advantage of the conformally symplectic property (see Definitions \ref{def:conformallysymplectic}) that maps like \eqref{dsm_theo} satisfy. 

We also point out that hypothesis \textbf{HTP1} and \textbf{HTP2} are very natural for the maps considered in Theorem \ref{main_theo_dsm}. The verification of these hypothesis for the dissipative standard map is described in detail in Proposition \ref{hyp-satisfy} of the Appendix. In the general setting in which Lemma \ref{main_theo} is stated, the hypothesis \textbf{HTP1} and \textbf{HTP2} are needed  to be able to get estimates, in balls with center at the origin, for the solutions of the linear equations of the quasi Newton method.

\subsection{Asymptotic estimates for invariance functions} \label{sec-gev-asymp}
The formal power series studied in this paper are asymptotic expansions of 
functions $K_\e, \mu_\e$ constructed in \cite{Cal-Cel-Lla-16}. The functions 
$K_\e, \mu_\e$ are determined by the condition 
that they satisfy the invariance equation \eqref{inv_2} and the normalization \eqref{normal_coeff}. In this section we argue that the same method we use to prove the Gevrey estimates also shows that the formal series defined 
here are asymptotic to the functions $K_\e, \mu_\e$ with very strong estimates in 
the remainder, see Theorem~\ref{gev-asymptotics}. 

We emphasize that the functions $K_\e, \mu_\e$ are not constructed out of the asymptotic expansions by complex analysis methods (Borel summation, resummation of series). They are obtained from the requirement that they satisfy
the invariance equation \eqref{inv_2} and the normalization \eqref{normal_coeff}. 
It is an interesting open question whether some resummation of the asymptotic 
expansions studied here  can produce the functions $K_\e, \mu_\e$. 


The domain of definition of the functions $K_\e, \mu_\e$ is rather subtle. In \cite{Cal-Cel-Lla-16}, it is proved  that the domain of
definition of $K_\e, \mu_\e$  contains a set $\G$ obtained by removing sequence of balls that are dense on curves converging to the origin, in fact, it is rigorously showed that $\G$ is a lower bound on the analyticity domain of the functions $K_\e,\mu_\e$. We also point out that the set $\G$ does not contain any ball centered at the origin. Indeed, the set $\G$ does not contain any sector centered at the origin of width bigger than $\pi/\alpha$, thus the width of the domain is not enough to apply many 
methods of complex analysis related to Phragm\'en-Lindel{\"o}f theory.  In the other direction, 
the paper \cite{Cal-Cel-Lla-16} contains arguments showing that for generic perturbations one should not expect that the domain of analyticity contains  the excluded balls (if the perturbation 
happens to be identically zero one indeed obtains a larger domain).  The paper \cite{Bus-Cal-19} studies numerically the maximal domain of definition of the functions $K_\e, \mu_\e$ for the map \eqref{dsm_theo} using 
a variety of methods including Pade summation and continuation methods. Indeed \cite{Bus-Cal-19} conjectured that the series were Gevrey and this was an important motivation for this paper.

The set $\G$ is determined by asking that $\lambda(\e) $ satisfies a Diophantine condition with respect to $\omega$, more precisely, defining 
\begin{equation} \label{nu-lambda}
\tilde{\nu} = \tilde{\nu}(\l;\omega,\tau) \equiv \sup_{k\in\integer^d\backslash\{0\}}|e^{2\pi i k\cdot\omega} -\l |^{-1}|k|^{-\tau}
\end{equation}
one has
\begin{equation} \label{set-G}
\G = \G(A;\omega,\tau, N) = \left\{ \e \in \complex \; : \quad\tilde{\nu}(\l;\omega,\tau) |\l(\e) -1 |^{N+1} \leq A  \right\}.
\end{equation}

The basic idea to prove the existence of the functions $K_\e, \mu_\e$ is as follows: The formal power expansions produces a sequence of polynomials which satisfy the invariance equation \eqref{inv_2} rather approximately in a ball.  In the intersection of the ball with the set $\G$, we can apply the a-posteriori  theorem, Theorem 14 in \cite{Cal-Cel-Lla-16}, and obtain a true solution of \eqref{inv_2}. Of course, the detailed implementation
requires taking into account several other issues such as the absence of monodromy.



In this paper, we will use a very similar technique. 
As as byproduct of the estimates used in the 
proof of Lemma~\ref{main_theo}, we obtain that some truncations of 
the formal expansion satisfy the invariance equation up to a very small 
error in appropriate balls.  Then, in the intersection of the balls with the set 
$\G$ we will be able to apply Theorem 20 in \cite{Cal-Cel-Lla-13}. 

More precisely we have:  

\begin{theorem}\label{gev-asymptotics}
Assuming the hypothesis of Lemma~\ref{main_theo} and $n\in(2^hN_0,2^{h+1}N_0]\cap\nat$, then for any $0< \d <\r_0$ the asymptotic expansions in Lemma~\ref{main_theo} satisfy 

\begin{equation}\label{gev-asympt-estimate}
\sup_{\e\in\G, |\e|\leq \tilde{\gamma}_{h+2}} \normm{ \sum_{j=1}^{n} K_j\e^j -K_\e}{\r_0-\d}   \leq \left( U +  V  2^{h(3\tau +3d)} r^{n+1}r^{2^hN_0}\right)(CD)^h B^{h^2}r^{(2^h-1)N_0}\normm{E^{N_0}}{\r_0} 
\end{equation}
where $\hat{C}$ and $C$ are uniform constants and $U=\hat{C} \nu^{-1}\tilde{\nu}^{-1}\d^{-2(\tau+d)}$, $V=C \nu^{-3} (aN_0)^{2\tau}\r_0^{-(\tau+3d)} 2^{2\tau+6d}$, $D=\nu^{-6}(aN_0)^{4\tau}\r_0^{-(2\tau +6d)} 2^{-(4\tau +12d)}$, $r= 2^{-\tau/\alpha}$, $ B= 2^{6\tau +6d}$, and $ \tilde{\g}_h = (2^{-1}\nu)^{1/\alpha}(a2^hN_0)^{-\tau/\alpha}.$
\end{theorem}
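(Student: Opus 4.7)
The plan is to combine the iterative error estimates developed in Lemma~\ref{main_theo} with the a-posteriori KAM theorem (Theorem~20 of \cite{Cal-Cel-Lla-13}) applied on the Diophantine set $\G$. Fix $h$ so that $n\in(2^hN_0,2^{h+1}N_0]\cap\nat$ and split
\[
\sum_{j=1}^n K_j\e^j - K_\e \;=\; \bigl(\expan{K_\e}{2^hN_0}-K_\e\bigr) \;+\; \sum_{j=2^hN_0+1}^n K_j\e^j,
\]
so the triangle inequality reduces \eqref{gev-asympt-estimate} to bounding the two summands on $\G\cap\{|\e|\le\tilde{\gamma}_{h+2}\}$. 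The common amplification factor $(CD)^hB^{h^2}r^{(2^h-1)N_0}\normm{E^{N_0}}{\r_0}$ already controls $\normm{E^{2^hN_0}}{\r_0-\d/2}$ on $|\e|\le\tilde{\gamma}_{h+1}$, because the Newton scheme of Section~\ref{sec_new_step} squares the error at each step (producing the convergence factor $r^{(2^h-1)N_0}$) while paying the per-step amplification $CD$ and the compounded domain-loss $B^{h^2}$.

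For the first summand I would invoke the a-posteriori theorem. On $\G$, the definitions \eqref{nu-lambda}--\eqref{set-G} make $\l(\e)$ Diophantine relative to $\omega$; the smallness hypothesis of Theorem~20 holds because $r^{2^hN_0}$ decays doubly exponentially in $h$ while $(CD)^hB^{h^2}$ grows only like $\exp(O(h^2))$, as arranged by choosing $N_0$ large in Lemma~\ref{main_theo}. The a-posteriori theorem then produces a true solution of \eqref{inv_2} at distance at most $\hat{C}\nu^{-1}\tilde{\nu}^{-1}\d^{-2(\tau+d)}\normm{E^{2^hN_0}}{\r_0-\d/2}$ from $\expan{K_\e}{2^hN_0}$ in $\normm{\cdot}{\r_0-\d}$; local uniqueness under the normalization \eqref{normal_coeff} identifies this solution with the $K_\e$ constructed in \cite{Cal-Cel-Lla-16}. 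This yields the $U$ contribution.

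For the second summand, note that $\sum_{j=2^hN_0+1}^n K_j\e^j$ is a truncation at degree $n$ of the next Newton correction $\Delta K_{h+1}=\sum_{j=2^hN_0+1}^{2^{h+1}N_0}K_j\e^j$, a polynomial in $\e$ of degree $\le 2^{h+1}N_0$ with $\e$-valuation $\ge 2^hN_0+1$. The linearized KAM solver of the Newton step bounds $\Delta K_{h+1}$ on $|\e|\le\tilde{\gamma}_{h+1}$ by $V\,2^{h(3\tau+3d)}\normm{E^{2^hN_0}}{\r_0-\d/2}$, with small-divisor losses $\nu^{-3}(aN_0)^{2\tau}$ permitted by the trig-polynomial hypotheses \textbf{HTP1}--\textbf{HTP2} and domain-shrinkage factor $2^{h(3\tau+3d)}$. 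Applying Corollary~\ref{lem_cauchy_est_ser} twice to $\Delta K_{h+1}$ — once extracting $r^{2^hN_0}$ from its valuation when passing from $\tilde{\gamma}_{h+1}$ to $\tilde{\gamma}_{h+2}$ (ratio $r$), once extracting $r^{n+1}$ from isolating the part of degree $\le n$ — produces the $V\,2^{h(3\tau+3d)}r^{n+1}r^{2^hN_0}$ contribution.

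The main obstacle is the careful bookkeeping: verifying that the compound per-step constants consolidate exactly into the form $(U+V\cdot\ldots)(CD)^hB^{h^2}r^{(2^h-1)N_0}$ and that the a-posteriori smallness hypothesis is satisfied uniformly in $h$. Both hinge on the same fact — the doubly exponential decay $r^{2^hN_0}$ dominates the exponential-in-$h^2$ growth of the loss factors, guaranteed by the choice of $N_0$ already made in Lemma~\ref{main_theo} — which delivers the estimate \eqref{gev-asympt-estimate}.
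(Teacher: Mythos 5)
Your decomposition is not the one that delivers the stated bound, and the way you extract the two powers of $r$ from the second summand does not hold up. You write
\[
\sum_{j=1}^{n} K_j\e^j - K_\e = \bigl(\expan{K_\e}{2^hN_0}-K_\e\bigr) + \sum_{j=2^hN_0+1}^{n} K_j\e^j,
\]
and then claim that Corollary~\ref{lem_cauchy_est_ser} applied ``twice'' to the head $\sum_{j=2^hN_0+1}^{n}K_j\e^j$ yields both $r^{2^hN_0}$ and $r^{n+1}$. That is not what the corollary does: it extracts $r^{a+1}$ where $a+1$ is the \emph{lowest} degree (the $\e$-valuation) of the truncation, never anything from the top degree, and the two applications cannot be compounded multiplicatively on the same object. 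Your head has valuation $2^hN_0+1$, so passing from $\tilde\gamma_{h+1}$ to $\tilde\gamma_{h+2}=r\tilde\gamma_{h+1}$ only gives $r^{2^hN_0+1}$. Together with the $r^{2^hN_0}$ already present inside $d_h$, you obtain $r^{2\cdot 2^hN_0+1}$, whereas \eqref{gev-asympt-estimate} requires $r^{n+1}r^{2^hN_0}$ with $n>2^hN_0$, which is strictly smaller. Your route therefore proves a genuinely weaker estimate and does not give the theorem as stated.

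The paper instead writes
\[
\expan{K_\e}{n} - K_\e = \bigl(\expan{K_\e}{2^{h+1}N_0}-K_\e\bigr) - \expanin{\Delta_\e}{n}{2^{h+1}N_0},
\]
i.e.\ it subtracts the \emph{tail} $\expanin{\Delta_\e}{n}{2^{h+1}N_0}=\sum_{j=n+1}^{2^{h+1}N_0}K_j\e^j$ of the Newton correction rather than adding its head. The tail has valuation $n+1$, so one Cauchy restriction gives exactly the factor $r^{n+1}$, and the additional $r^{2^hN_0}$ comes from the bound \eqref{d_h} on $d_h$ itself (which already carries $r^{N_h}$ from Corollary~\ref{lem_est_trunc_ser}). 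The a-posteriori theorem is then applied to the $2^{h+1}N_0$-jet, not the $2^hN_0$-jet. Your treatment of the first summand and of the per-step amplification constants is consistent with the paper's; the only substantive issue is the choice of decomposition for the correction term, which needs to isolate the tail (valuation $n+1$) to produce $r^{n+1}$.
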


Note that \eqref{gev-asympt-estimate} can be understood  as having super-exponentially small errors in domains decreasing exponentially fast. It is also important to note that almost all constants in \eqref{gev-asympt-estimate} are given explicitly. The proof of Theorem~\ref{gev-asymptotics} is given in Section \ref{proof-gev-asymptotics}.

\section{Iterative step of the quasi Newton method.} \label{sec_new_step}

The KAM procedure for the proof of Theorem \ref{main_theo} is based on the application of a quasi Newton method, which is described in Section  \ref{newton_method}. Before describing this procedure we introduce two types of cohomology equations that allow us to solve the linear equations, and obtain estimates, of the modified Newton method. The estimates for each step of the method will be given in  Section \ref{sec_nolin_est}.

\subsection{Estimates for some cohomology equations}
The iterative step described in Section \ref{newton_method} depends on the solution of two cohomology equations. The first equation, \eqref{cohom2}, is very standard in KAM theory. The estimate given in Lemma \ref{classic_lin_est} is well known for the experts in KAM theory, we have decided to include a proof here for the sake of completeness. 
The second type of cohomology equation we consider, \eqref{cohom}, it is more complicated to study due to the fact of the appearance of the factor $\l(\e)=1-\e^\alpha$. This factor introduces some restrictions in the set of parameters, $\e$, for which we are able to obtain estimates.  

\subsubsection{Standard cohomology equation}

The first cohology equation we deal with is the following

\begin{equation}\label{cohom2}\varphi_\e(\t)-\varphi_\e(\t+\omega)=\eta_\e(\t) \end{equation}
Lemma \ref{classic_lin_est} below, gives sufficient conditions to solve equation \eqref{cohom2} and to obtain estimates of its solutions. This estimates are very standard in KAM theory.

\lem{classic_lin_est} Let $\omega\in\D(\nu,\tau)$. Assume that $\eta\in\A_{\r,r}$ is such that $\int_{\torus^d}\eta_\e(\t)d\t=0$. Then, we can find a unique solution of \eqref{cohom2}, $\varphi_\e$, that satisfies $\int_{\torus^d}\varphi_\e(\t)d\t=0$. Moreover, if for any $0<\d\leq\r$ we have $\varphi\in\A_{\r-\d,r}$, then $$\norm\varphi{\r-\d}r\leq C\nu^{-1}\d^{-(\tau+d)}\norm \eta \r r.$$ With $C=C(d)$. Furthermore, $\eta_\e\sim\ord k$ implies $\varphi_\e\sim \ord k$.\elem
\begin{proof}
Expanding in Fourier series the solution to \eqref{cohom2} is given by $\varphi_\e(\t) = \sum_{k\in \integer^d\backslash \{0\}}\frac{\eta_k (\e)}{1-e^{2\pi i k\cdot \omega}} e^{2\pi i k\cdot \t} $. Then, using Cauchy estimates one obtains 
\begin{align}
    \normm{\varphi_\e }{\r-\d} 
    &  \leq \sum_{k\in\integer^d\backslash\{0\}} \frac{|\hat{\eta_k}(\e)|}{|1-e^{2\pi ik\cdot\omega}|}\normm{e^{2\pi ik\cdot\t}}{\r-\d} \nonumber \\
    & \leq \sum_{k\in\integer^d\backslash\{0\}} \nu^{-1}|k|^\tau \normm{\eta_\e }{\r}e^{-2\pi |k|\r} e^{2\pi (\r-\d)|k|} \nonumber \\
    & \leq C\nu^{-1}\normm{\eta_\e }{\r} \sum_{j\in \nat} j^{\tau +d + 1}e^{2\pi\d j} \nonumber \\
    & \leq C\nu^{-1}\delta^{-(\tau +d)}\normm{\eta_\e}{\r}.
\end{align}
The last line gives $\varphi_\e \sim \ord{k}$ if $\eta_\e \sim \ord{k}$ and taking supremum over $\e$ the result is proved.
\end{proof}

\begin{remark}
Equation \eqref{cohom2} appears very often in KAM theory. When $\e\in\real$, the paper \cite{Rus-75} contains estimates with a better exponent on $\d$. That is, in the same situation of Lemma \ref{classic_lin_est}, when $\e\in\real$, one can get $\|\varphi_\e\|_{\r-\d} \leq C\nu \d^{-\tau}\|\eta_\e\|_\r $.
\end{remark}

\subsubsection{Parametric cohomology equation}

The second cohomology equation  we are interested in is an equation for $\varphi_\e:\torus^d \rightarrow\complex$, of the form  \begin{equation}\label{cohom} \l(\e)\varphi_\e(\t)-\varphi_\e(\t+\omega)=\eta_\e(\t)\end{equation} where  $\eta_\e :\torus^d \rightarrow \complex$ and $\omega\in\real^d$ are given, $\e$ fixed. 

Note that, as it is seen in Lemma \ref{lin-est}, solve equation \eqref{cohom} presents a small divisors problem. In this case the small divisors depend on the variable $\e$, that is, equation $\eqref{cohom}$ is not expected to have a solution when $ \l(\e) = e^{2\pi i k \cdot \omega} $.  One approach that has been used to deal with the small divisors in equation \eqref{cohom} (see \cite{Cal-Cel-Lla-13}) requires to \emph{remove} a set from the complex plane, $\e\in \complex$, where the denominators $\l(\e) - e^{2\pi i k\cdot \omega}$ are small. This gives rise to a set with a complicated structure, $\G\subset \complex$, of parameters, $\e$,  in which is possible to find a solution, and estimates, of equation \eqref{cohom}. One of the properties of the set $\G$ described in  \cite{Cal-Cel-Lla-13}, is that it does not contain any ball with center at the origin. This property is one of the reasons for which we follow a different approach to deal with equation \eqref{cohom}, to prove the Gevrey estimates in Lemma \ref{main_theo} we rely heavily on being able to obtain estimates of \eqref{cohom} for $\e$ in a ball centered at the origin.

The following two Lemmas allow us to obtain estimates in balls centered at $\e=0$ for the solution, $\varphi_\e$, of equation \eqref{cohom} whenever $\eta_\e $ is a trigonometric polynomial. If the degree of the trig polynomial, $\eta_\e$, is $aN$, Lemma \ref{no-sdiv} gives a relation between this degree and a domain in which the solution, $\varphi_\e$, of \eqref{cohom}  will be analytic in $\e$. 

Note that the requirement of hypothesis \textbf{HTP1} and \textbf{HTP2} in Lemma \ref{main_theo} is due to the fact that the quantities given in these hypothesis will be the right hand side of equations of the form \eqref{cohom}.                                    

\lem{no-sdiv} Let $\omega\in\D(\nu,\tau)$, $\l(\e)=1-\e^\alpha$, $\alpha\geq 1$, and $a,N\in\nat$. If  $|\e| \leq \left(\frac{\nu}{2}\right)^{1/\alpha} \frac{1}{(aN)^{\tau/\alpha}}$, then,  for $|k|\leq aN$ we have  $$\left|\l(\e)-e^{2\pi ik\cdot\omega}\right|\geq\frac{\nu}{2}\frac{1}{(aN)^{\tau}}$$
\elem
\begin{proof}
$$|e^{2\pi ik\cdot\omega}-\l(\e)|\geq |e^{2\pi i k\cdot\omega}-1|-|1 -\l(\e)|\geq \frac{\nu}{|k|^\tau}-|\e|^\alpha\geq \frac{\nu}{(aN)^\tau}-\frac{\nu}{2(aN)^\tau}=\frac{\nu}{2}\frac{1}{(aN)^\tau}$$
\end{proof}

\lem {lin-est} Let $\l(\e)=1-\e^\alpha$, $\alpha\geq 1$, $\omega\in\D(\nu,\tau)$; $a,N\in\nat$,  and define $$\g_N=\left(\frac{\nu}{2}\right)^{1/\alpha}\frac{1}{(aN)^{\tau/\alpha}}.$$ Let $\eta\in\A_{\r,\g_N}$ such that $\int_{\torus^d}\eta_\e(\t)d\t=0$ and assume that, for any $\e$,  $\eta_\e(\t)$ is a trigonometric polynomial of degree $aN$ in $\theta$. Then, for any $|\e|\leq\g_N$  equation \eqref{cohom} has a unique solution, $\varphi_\e(\t)$, such that $\int_{\torus^d}\varphi_\e(\t)d\t=0$. Furthermore, if for any $0<\delta\leq\rho$ we have $\varphi\in\A_{\r-\d,\g_N}$, then, $$\norm\varphi{\r-\d}{\g_N}\leq C\nu^{-1}(aN)^\tau\d^{-d}\norm\eta\r{\g_N}.$$ Moreover, if $\eta_\e\sim\ord k$, then $\varphi_\e\sim \ord k$.
\elem

\begin{proof}
Expanding $\eta_\e$ in Fourier series as $\eta_\e(\t)=\sum_{0<|k|\leq aN}\hat{\eta}_k(\e)e^{2\pi ik\cdot\t}$ a solution to \eqref{cohom} is given by $$\varphi_\e(\t)=\sum_{0<|k|\leq aN}\frac{\hat{\eta}_k(\e)}{\l(\e)-e^{2\pi ik\cdot\omega}}e^{2\pi ik\cdot\t}.$$ Using Lemma \ref{no-sdiv} and Cauchy estimates, one obtains that for any $|\e|\leq\g_N$
\begin{align}
\normm{\varphi_\e}{\r-\d} & \leq \sum_{0<|k|\leq aN}\frac{|\hat{\eta_k}(\e)|}{|\l(\e)-e^{2\pi ik\cdot\omega}|}\normm{e^{2\pi ik\cdot\t}}{\r-\d} \nonumber \\ 
		&\leq 2(aN)^\tau\nu^{-1}	\sum_{0<|k|\leq aN}|\hat{\eta}_k(\e)|e^{2\pi|k|(\r-\d)} \nonumber\\
        & \leq 2(aN)^\tau\nu^{-1}\sum_{0<|k|\leq aN}\normm{\eta_\e}\r e^{-2\pi|k|\r}e^{2\pi|k|(\r-\d)} \nonumber\\
        & \leq 2(aN)^\tau\nu^{-1}\normm{\eta_\e}\r\sum_{j=1}^{aN}j^{d-1}e^{-2\pi j\d}\nonumber\\
        & \leq C\nu^{-1}(aN)^\tau \delta^{-d}\normm {\eta_\e}\r \label{phi_eta}
\end{align}

Thus, $\norm \varphi{\r-\d}{\g_N}\leq C\nu^{-1}(aN)^\tau\d^{-d}\norm\eta\r{\g_N}$. The last claim comes from \eqref{phi_eta}, that is $\varphi_\e\sim\ord k$ if $\eta_\e\sim\ord k$.
\end{proof}

\subsection{Formulation  of the quasi Newton method}\label{newton_method}

Every step of the quasi Newton method  starts with a solution of equation \eqref{inv_2} up to order $\e^N$. That is, assume that $$ \expan {K_\e} N(\t)=\sum_{n=0}^NK_n(\t)\e^n,\quad \expan {\mu_\e} N=\sum_{n=0}^N \mu_n\e^n$$ satisfy the normalization \eqref{normal_coeff} and $$  f_{\expan {\e,\mu_\e} N}\circ\expan {K_\e}N(\t) -\expan {K_\e}N(\t+\omega)=:E_\e^N(\t) $$ with $$\normm{E^N_\e}{\rho} \leq C|\e|^{N+1}. $$

\begin{remark}
The first step of the Newton method could start with $\expan K {N_0}$, $\expan \mu {N_0}$, given by Theorem \ref{theo_prev_paper}, for some $N_0$.
\end{remark}

Newton's method consists in finding corrections $\Delta_\e,\mu_\e$ to $\expan{K_\e}N$ and $\expan{\mu_\e}N$ such that the linear approximation of equation \eqref{inv_2} associated to $\expan{K_\e}N+\Delta_\e, \expan{\mu_\e}N +\sigma_\e $  reduces the error up to quadratic terms. Taking into account that $$f_{\e,\mu+ \sigma}\circ (K +\Delta) = f_{\e,\mu}\circ K +\left[Df_{\e,\mu}\circ K\right]\Delta +\left[D_\mu f_{\e,\mu}\circ K \right]\sigma +O(\|\Delta\|^2) +O(\|\sigma\|^2) $$ the Newton equation is  
\begin{equation}\label{newton}
    \left[Df_{\expan {\e,\mu_\e} N}\circ \expan{K_\e}N\right]\Delta_\e -\Delta_\e\circ T_\omega + \left[D_\mu f_{\e, \expan {\mu_\e}N}\circ \expan{ K_\e}N \right]\sigma_\e =-E_\e^N. 
\end{equation}
Equation \eqref{newton} is not easy to solve due to the fact that $Df_{\e,\expan{\mu_\e}{N}}\circ\expan{K_\e }{N}$ is not constant. Following an approach similar to that in \cite{Cal-Cel-Lla-13}, we will not solve \eqref{newton} exactly but we will find approximate solutions that will reduce quadratically the error. The idea is to approximate the solution of \eqref{newton} using the geometric identities introduced in Section \ref{sec_auto_red}. Considering the change of variables \begin{equation}\label{W_def}
    \Delta_\e = \expan{M_\e}N W_\e,
\end{equation} where $\expan{M_\e }N$ is as in \eqref{M_first} computed from $\expan{K_\e}N$. Using \eqref{red_error_eq_1} one obtains that \eqref{newton} is equivalent to \begin{equation}\label{newton_2}
    \expan {M_\e} N\circ T_\omega\left[\left(\begin{matrix} \Id & \expan {S_\e}N \\ 0 & \l(\e)\Id\end{matrix}\right)W_\e -W_\e\circ T_\omega\right] +\left(D_\mu f_{\e,\expan {\mu_\e} N}\circ \expan {K_\e}N\right)\sigma_\e= -E_\e^N -\expan {R_\e} N W_\e
\end{equation}
where $\expan {R_\e} N$ is the error \eqref{red_error} and $\expan {S_\e}N $ is given in 
\eqref{S_red}, both computed from $\expan{K_\e}N$. That is

\beq{M} \expan {M_\e} N\equiv\left[\expan{DK_\e}N\bmid J^{-1}\circ \expan {K_\e} ND\expan {K_\e} N\expan{\N_\e} N\right] \sim \O(|\e |^0)\eeq
\begin{align}
    \expan {S_\e} N & \equiv  {\expan {P_\e}N}^\top Df_{\expan {\mu_\e} N,\e}\circ \expan {K_\e} N J^{-1}\circ\expan {K_\e} N \expan {P_\e}N -\l(\e){\expan {\N_\e} N}^\top\expan {\Gamma_\e} N \expan {\N_\e} N \sim \O(|\e |^0)\label{S} 
    \\  
    \expan {\N_\e}{N} & \equiv \left[\left(D\expan {K_\e} N \right)^\top D\expan {K_\e} N\right]^{-1} \sim \O(|\e |^0),\label{N} 
    \\
    \expan {P_\e}N  & \equiv D\expan {K_\e} N \expan {\N_\e} N, \nonumber 
    \\
    \expan {\Gamma_\e} N & \equiv {D\expan {K_\e} N}^T J^{-1}\circ \expan {K_\e} N D\expan {K_\e} N \label{Gamma_e}
\end{align}
Since we expect both $W_\e$ and $\expan{R_\e}N$ to be estimated by $E^N_\e$, see \eqref{R} and \eqref{w_final_estimate}, the term $W_\e\expan{R_\e}{N}$ is quadratic in $E^N_\e$, thus, we expect that omitting this term in \eqref{newton_2} will not change the quadratic nature of the method. 

In order to be able to get estimates of solutions of  cohomology equations of the form \eqref{cohom} instead of considering the  whole error  $E^N_\e=\sum_{j=N+1}^\infty E_j\e^j$ we only consider a truncation of this series, that is, we only consider $\expanin {E_\e}N{2N}=\sum_{j=N+1}^{2N}E_j\e^j$.\\ 
Taking the above into account our quasi Newton step consist in solving the following equation \beq{quasi_newton}\expan {M_\e}N\circ T_\omega\left[\left(\begin{matrix} \Id & \expan {S_\e}N \\ 0 & \l(\e)\Id\end{matrix}\right)W_\e-W_\e\circ T_\omega\right] +\left(D_\mu f_{\e,\expan {\mu_\e} N}\circ \expan {K_\e}N\right)\sigma_\e= -\expanin {E_\e}N{2N} \eeq
\begin{remark}
The election  of the truncation $\expanin{E_\e}N{2N}$ in \eqref{quasi_newton}  has two very important implications for the proof of our result. The first one is that this will yield a new approximate solution which reduces the error quadratically, as a function of $\e$. Moreover, our model example, the dissipative standard map \eqref{diss-est-map}, will satisfy hypothesis \emph{\textbf{HTP1}} and \emph{\textbf{HTP2}} in Lemma \ref{main_theo} due to the fact that the truncation is made. See appendix \ref{appendix}.
\end{remark}

In order to construct a solution of equation \eqref{quasi_newton}, we follow a similar approach as in \cite{Cal-Cel-Lla-13}. Defining
\beq{etilde}
    \expanin{\tilde{E_\e}}N{2N}:=\left(\expan {M_\e}N\circ T_\omega\right)^{-1}\expanin {E_\e}N{2N}\sim \O(|\e |^{N+1})
\eeq 
\beq{atilde}  
    \tilde{A}^N_\e := \left(\expan {M_\e}N\circ T_\omega\right)^{-1} D_\mu f_{\e,\expan {\mu_\e} N}\circ \expan {K_\e}N \sim \O(|\e |^0)
\eeq
and writing  $\expanin{\tilde{E}_\e}N{2N}\equiv(\expanin {\tilde{E}_{\e,1}}N{2N}, \expanin {\tilde{E}_{\e,2}}N{2N})^\top$, where $\expanin {\tilde{E}_{\e,1}}N{2N}$ and $\expanin {\tilde{E}_{\e,2}}N{2N}$ are the first and last $d$ rows of the $2d\times 1$ matrix $\expanin{\tilde{E_\e}}N{2N}$. Similarly, write $\tilde{A}_\e^N=(\tilde{A}^N_{\e,1}, \tilde{A}^N_{\e,2})^\top$ and $W_\e=(W_{\e,1},W_{\e,2})^\top$. Then \eqref{quasi_newton} can be written in components as 
\begin{align}
    W_{\e,1} -W_{\e,1}\circ T_\omega & = -\expan{S_\e }{N}W_{\e,2} - \expanin{\tilde{E}_{\e,1}}N{2N} - \tilde{A}^N_{\e,1}\sigma_\e  \\
    \lambda(\e)W_{\e,2} -W_{\e,2}\circ T_\omega & =  - \expanin{\tilde{E}_{\e,2}}N{2N} - \tilde{A}^N_{\e,2}\sigma_\e
\end{align}
Denoting $\overline{W_{\e,i}}$ as the average of $W_{\e,i}$, with respect to $\t$, and  $ \noaverage {W_{\e,i}}= W_{\e,i} - \overline{W_{\e,i}}$, $i=1,2$; we can divide the system above into two systems, one for the average and another one for the no-average part, that is
\begin{align}
    0 &= -\overline{\expan{S_\e }{N}}\overline{W_{\e,2}} -\overline{\expan{S_\e}{N}\noaverage{W_{\e,2}}} -\overline{\expanin{\tilde{E}_{\e,1} }N{2N}} -\overline{\tilde{A}_{\e,1}^N} \sigma_\e \nonumber \\
    \e^3 \overline{W_{\e,2}} &= -\overline{\expanin{\tilde{E}_{\e,2} }N{2N}} -\overline{\tilde{A}_{\e,2}^N} \sigma_\e \label{syst_1}
\end{align}

\begin{align}
      \noaverage{W_{\e,1}} -\noaverage{W_{\e,1}}\circ T_\omega & = -\noaverage{\expan{S_\e }{N}W_{\e,2}} - \noaverage{\expanin{\tilde{E}_{\e,1}}N{2N}} - \noaverage{\tilde{A}^N_{\e,1}}\sigma_\e \nonumber \\
    \lambda(\e)\noaverage{W_{\e,2}} -\noaverage{W_{\e,2}}\circ T_\omega & =  - \noaverage{\expanin{\tilde{E}_{\e,2}}N{2N}} - \noaverage{\tilde{A}^N_{\e,2}}\sigma_\e.\label{syst_2}  
\end{align}
In order to uncouple systems \eqref{syst_1} and \eqref{syst_2} we consider $\noaverage{W_{\e,2}}$ as an affine function of $\sigma_\e$, due to \eqref{syst_2}. That is,   

\beq{nonaverage_w2} \noaverage{W_{\e,2}}=\noaverage{B_{a,\e}}+\noaverage{B_{b,\e}} \sigma_\e \eeq
where $\noaverage{B_{a,\e}}$ and $\noaverage{B_{b,\e}}$ are defined as the solutions of 
  \beqa{ba} \l(\e)\noaverage{B_{a,\e}}-\noaverage{B_{a,\e}}\circ T_\omega= -\noaverage{\expanin{\tilde{E}_{\e,2}}N{2N} } \\ \label{bb}
\l(\e)\noaverage{B_{b,\e}}-\noaverage{B_{b,\e}}\circ T_\omega= -\noaverage{\tilde{A}^N_{\e,2}}. \eeqa
Due to \textbf{HTP1}, and applying Lemma \ref{lin-est}, equations \eqref{ba} and \eqref{bb} can be solved and we can get estimates in balls with center at $\e=0$. Once that \eqref{ba} and \eqref{bb} are solved, and using \eqref{nonaverage_w2}, system \eqref{syst_1} can be written as 
\beq{w2barsigma} 
\left(\begin{matrix} 
\overline{\expan{S_\e}N} &\,& \overline{\expan {S_\e}N\noaverage{B_{b,\e}}}+\overline{\tilde{A}_{\e,1}^N} \\ \e^3\Id &\,& \overline{\tilde{A}_{\e,2}^N}
\end{matrix}\right) 
\left(\begin{matrix}\overline{W_{\e,2}} \\ \sigma_\e \end{matrix}\right) = 
\left(\begin{matrix} -\overline{\expan {S_\e}N\noaverage{B_{a,\e}}}-\overline{\expanin{\tilde{E}_{\e,1}}N{2N}} \\ -\overline{\expanin{\tilde{E}_{\e,2}}N{2N}} \end{matrix}\right) \eeq 

\begin{remark} 
Due to \emph{\textbf{HND}} in Theorem \ref{theo_prev_paper} the matrix in the left hand side of \eqref{w2barsigma} is invertible at $\e=0$. By the continuity of the determinant, equation \eqref{w2barsigma} can be solved for $\e$ small enough and the inverse is analytic in $\e$. 
\end{remark}
Thus, \eqref{nonaverage_w2} and \eqref{w2barsigma} yield $\sigma_\e\sim \ord{N+1}$ and $W_{\e,2} = \noaverage {W_{\e,2}} + \overline{W_{\e,2}} \sim \ord{N+1}$. It remains to find $W_{\e,1}$, this can be done by solving the equation \beq{w1} \noaverage{W_{\e,1}}-\noaverage{W_{\e,1}}\circ T_\omega= -\noaverage{\expan {S_\e}NW_{\e,2}} -\noaverage{\expanin{\tilde{E}_{\e,1}}N{2N}} -\noaverage{A^N_{\e,1}}\sigma_\e, \eeq 
which can be done due to Lemma \ref{classic_lin_est}. To fulfill the normalization condition \eqref{normal_coeff} and obtain uniqueness of the coefficients of the perturbative expansions, $\overline{W}_{\e,1}$ is chosen as  
\begin{equation}\label{uni-cond}
\overline{W}_{\e,1} = -\left( \int_{\torus^d} \left[ M_0^{-1}(\t)D\expan KN_\e \right]_d d\t\right)^{-1}  \int_{\torus^d} \left[M_0^{-1}(\t)\left( D\expan KN _\e\noaverage {W_{\e,1}} + \expan {V_\e}N W_{\e,2}\right)\right]_d d\t
\end{equation} 
where $\expan{V}{N} = J^{-1}\circ \expan {K_\e} ND\expan {K_\e} N\expan{\N_\e} N$ is the second \textit{column} of the matrix $\expan{M_\e}{N}$, see Remark \ref{remark_uniq_ser}.

\begin{remark}

Assuming that $\expan{K_\e }{N}$ satisfies the normalization \eqref{normal_coeff}, then the new approximation $\expan{K_\e}{N}+ \Delta_\e$ will satisfy \eqref{normal_coeff} if the correction 
satisfies $$\int_{\torus^d} M_0^{-1}(\t)\Delta_\e(\t) d\t =0.$$ Since $\Delta_\e =\expan {M_\e}N W_\e = D\expan {K_\e}N W_{\e,1} + \expan {V_\e}N W_{\e,2}  = D\expan {K_\e}N  \left( \noaverage{W_{\e,1}} +\overline{W_{\e,1}}\right) + \expan {V_\e}N W_{\e,2}$, \eqref{uni-cond} follows from the fact that $\int_{\torus^d} \left[ M_0^{-1}D\expan {K_\e}N \overline{W}_{\e,1}\right]_d d\t = \int_{\torus^d} \left[M_0^{-1}D\expan {K_\e}N\right]_d d\t \overline{W_{\e,1}}$.   
Note that the $d\times d$ matrix $\int_{\torus^d} \left[M_0^{-1}(\t)D\expan KN_\e(\t) \right]_d d\t$ 
is invertible, for $\e$ small enough,  due to the fact that $D\expan KN_\e(\t)$ is a perturbation of $DK_0(\t)$ and $\left[M^{-1}_0(\t)DK_0(\t)\right]_d =I_{d\times d}$, because $M_0(\t)  = \left[ DK_0(\t)|V_0(\t) \right]$.
\end{remark}

This yields, $W_{\e,1} =\noaverage{W_{\e,1}} +\overline{W}_{\e,1}\sim\ord{N+1}$ and thus  \beq{Delta}  \Delta_\e= \expan{M_\e} N W_\e \sim\ord {N+1}\quad\mbox{and}\quad \sigma_\e\sim\ord {N+1}.\eeq which means that $\Delta_\e =\sum_{n=N+1}^\infty \Delta_n\e^n$ and $\sigma_\e=\sum_{n=N+1}^\infty \sigma_n\e^n$. Finally, we take the corrections as 
\begin{equation}
\expanin{\Delta_\e} N{2N} \equiv \sum_{n=N+1}^{2N}\Delta_n\e^n\quad\mbox{and}\quad \expanin\sigma N{2N}_\e\equiv\sum_{n=N+1}^{2N}\sigma_n\e^n.
\end{equation}  
Therefore, the new approximation is chosen as 
\begin{equation}
\expan {K_\e}{2N}:= \expan K{N}_\e +\expanin{\Delta_\e}N{2N}\quad \mbox{and} \quad \expan{\mu_\e}{2N}:= \expan {\mu_\e} N +\expanin {\sigma_\e}N{2N}.
\end{equation}

\begin{remark}
Notice that, due to Lemma \ref{lin-est}, the solutions of \eqref{ba} and  \eqref{bb} will satisfy  $\noaverage {B_{a,\e}} \sim \O(|\e|^{N+1})$ and $\noaverage{B_{b,\e}} \sim \O(|\e|^0)$, because $\noaverage{\expanin{\tilde{E}_{\e,2}}N{2N} }\sim\O(|\e|^{N+1})$ and  $\noaverage{\tilde{A}^N_{\e,2}}\sim \O(|\e|^0)$. Moreover, \eqref{w2barsigma} implies that $\overline{W}_{\e,2}\sim\O(|\e|^{N+1})$ and $\sigma_\e\sim \O(|\e|^{N+1})$. Thus, $W_{\e,2}\sim\O(|\e|^{N+1})$ and similarly $W_{\e,1}\sim \ord {N+1}$ which implies $\Delta_\e\sim\ord {N+1}$.
\end{remark}

\subsection{Algorithm for the iterative step}
The procedure described above leads Algorithm \ref{algo} for a given Diophantine vector $\omega$ and assuming that we are given an analytic family $f_{\e,\mu_\e}$. Some steps in the algorithm are denoted as $p\leftarrow q$, meaning that the quantity $q$ is assigned to the variable $p$.

\begin{algorithm}\label{algo} Given  $\expan{K_\e}N :\torus^n\rightarrow \M$, $\expan{\mu_\e}{N}\in \real^d$. We perform the following computations:

\bigskip
$\begin{array}{ll}
    (1) & E^N_\e \leftarrow f_{\e,\expan{\mu_\e}N} \circ \expan{K_\e }{N} -\expan{K_\e }{N}\circ T_\omega  \\
    (2) & \expanin{E_\e }N{2N}\mbox{ obtained from } E^N_\e \mbox{ by truncation} \\
    (3) & \alpha_\e \leftarrow D\expan{K_\e }{N} \\
    (4) & \N_\e \leftarrow [\alpha_\e^\top \alpha_\e]^{-1} \\
    (5) & V_\e \leftarrow J^{-1}\circ \expan{K_\e}{N}\alpha_\e \N_\e \\
    (6) & M_\e \leftarrow [\alpha_\e | V_\e] \\
    (7) & \beta_\e \leftarrow (M_\e\circ T_\omega)^{-1} \\
    (8) & \expanin{\tilde{E}_\e}N{2N} \leftarrow \beta_\e \expanin{E_\e}N{2N} \\
    (9) & P_\e \leftarrow \alpha_\e \N_\e \\
        & \Gamma_\e \leftarrow \alpha_\e^\top J^{-1}\circ\expan{K_\e }{N} \alpha_\e \\
        & S_\e \leftarrow (P_\e\circ T_\omega)^\top Df_{\expan{\mu_\e }{N}, \e}\circ \expan{K_\e }{N}J^{-1}\circ \expan{K_\e }{N} P_\e -\l(\e)(\N_\e \circ T_\omega)^\top  \Gamma_\e\circ T_\omega (\N_\e \circ T_\omega) \\
        & \tilde{A}_\e \leftarrow \beta_\e D_\mu f_{\expan{\mu_\e }{N}}\circ \expan{K_\e }{N} \\
    (10)& \noaverage{B _{a,\e}} \mbox{ solves } \l(\e)\noaverage{B _{a,\e}} -\noaverage{B _{a,\e}}\circ T_\omega = -\noaverage{\expanin{\tilde{E}_{\e,2}}N{2N}} \\
        & \noaverage{B_{b,\e}}\mbox{ solves } \l(\e)\noaverage{B _{b,\e}} -\noaverage{B _{b,\e}}\circ T_\omega = - \noaverage{\tilde{A}_{\e,2}} \\
    (11)& \mbox{Find } \overline{W_{\e,2}}, \sigma_\e \mbox{ by solving } \\
     & \left(\begin{matrix} 
\overline{S_\e} &\, & \overline{ S_\e\noaverage{B_{b,\e}}}+\overline{\tilde{A}_{\e,1}} \\ \e^3\Id & \, & \overline{\tilde{A}_{\e,2}}
\end{matrix}\right) 
\left(\begin{matrix}\overline{W_{\e,2}} \\ \sigma_\e \end{matrix}\right) = 
\left(\begin{matrix} -\overline{S_\e \noaverage{B_{a,\e}}}-\overline{\expanin{\tilde{E}_{\e,1}}N{2N}} \\ -\overline{\expanin{\tilde{E}_{\e,2}}N{2N}} \end{matrix}\right) \\
    (12)& \noaverage{W_{\e,2}} = \noaverage{B_{a,\e}} +  \noaverage{B_{b,\e}}\sigma_\e \\
    (13)& W_{\e,2} = \noaverage{W_{\e,2}} + \overline{W_{\e,2}} \sim \ord{N+1} \\
    (14)& \noaverage{W_{\e,1}}\mbox{ solves } \noaverage{W_{\e,1}} -\noaverage{W_{\e,1}}\circ T_\omega = -\noaverage{S_\e W_{\e,2}} - \noaverage{\expanin{\tilde{E}_{\e,1}}N{2N}} - \noaverage{\tilde{A}_{\e,1}} \\
    (15)& \overline{W_{\e,1}} =  -\left( \int_{\torus^d} \left[ M_0^{-1}\alpha_\e \right]_1 d\t\right)^{-1}  \int_{\torus^d} \left[M_0^{-1}\left(\alpha_\e\noaverage {W_{\e,1}} + V_\e W_{\e,2}\right)\right]_1 d\t \\
    (16)& W_{\e,1}= \noaverage{W_{\e,1}} + \overline{W_{\e,1}} \sim \ord{N+1} \\
    (17)& \Delta_\e \leftarrow M_\e W_\e \\
    (18)& \expan{K_\e }{2N} \leftarrow \expan{K_\e }{N} + \expanin{\Delta_\e}N{2N} \\
        & \expan{\mu_\e }{2N} \leftarrow \expan{\mu_\e }{N} + \expanin{\sigma_\e }N{2N}
\end{array}$
\end{algorithm}

It is worth to know that all the operations in Algorithm \ref{algo} could be implemented in a few lines in a high level computer language.

\begin{remark}
Note that Algorithm \ref{algo} involves only algebraic operations, compositions, derivatives, truncations, and  solving cohomology equations. This implies that if we start with analytic functions then the output will be an analytic function. 
\end{remark}

\begin{remark}\label{rem_coef}
Note that at each step of the iterative procedure obtained by the quasi Newton method the input will be polynomials of degree $N$ in $\e$, $\expan {K_\e}N\equiv \sum_{n=0}^N K_n\e^n$, and $\expan \mu N_\e= \sum_{n=0}^N \mu_n\e^n$. The output will be  polynomials of degree $2N$ in $\e$ given by 
$$\expan {K_\e}{2N}:=\expan {K_\e}N +\expanin {\Delta_\e} N{2N}\quad\mbox{and}\quad \expan{\mu_\e} {2N} :=\expan{\mu_\e} N +\expanin{\sigma_\e} N{2N}.$$  
Since, by construction, $\expanin {\Delta_\e} N{2N}\sim \ord{N+1}$ and $\expanin {\sigma_\e} N{2N} \sim \ord{N+1}$, the first $N$ coefficients $K_1,K_2,...,K_N$ of the expansion of  $\expan K {2N}$ will be the same coefficients of  $\expan K {N}$ and they will not change for any of the next steps. The same also happens for the coefficients of $\expan{\mu_\e }{2N}$. This is a crucial step for proving the main lemma, Lemma \ref{main_theo}, since due to the fact that the coefficient up to order $N$ do not change after $\log_2(N)$ steps of the modified Newton method, one can use Cauchy estimates in the domains given by Lemma \ref{lin-est} after $\log_2(N)$ steps to obtain estimates on the $N$ coefficient.
\end{remark}

\begin{remark}
To iterate the modified Newton method in Algorithm \ref{algo} it is needed that the new error $E_\e^{2N}$ obtained using the new approximations $\expan {K_\e} {2N} = \expan{K_\e }{N} + \expanin{\Delta_\e }N{2N}$ and  $\expan {\mu_\e} {2N} =  \expan{\mu_\e }{N} + \expanin{\sigma_\e }{N }{2N}$ satisfies $E_\e^{2N} \sim \ord{2N+1}$. This is a consequence of the fact that the new error is quadratic in the original error, as an expansion on $\e$, and this is verified in Lemma \ref{lem_taylor_est}. 


\end{remark}
\section{Estimates for the iterative step.} \label{sec_nolin_est}

In this section we present the estimates for the corrections given by the  Newton step described in Section \ref{sec_new_step}, these estimates are obtained by following the steps in Algorithm \ref{algo}. Throughout this section we consider maps in the spaces $\Aa\r\g$. In the following we will be dealing with equations of the form \eqref{cohom} which, accordingly with Lemma \ref{lin-est}, can be solved if

\begin{equation}\label{gamma}
\e \leq \g_N :=\left(\frac{\nu}{2}\right)^{1/\alpha}\frac{1}{(aN)^{\tau/\alpha}}.
\end{equation}
where $aN$ is the degree of the trigonometric polynomial in the right hand side of \eqref{cohom}.

\subsection{Estimate for the  reducibility error.}

The following Lemma provides an estimate for the error in the approximate reducibility given by $\expan {R_\e} N$ as in \eqref{red_error} computed from $\expan{K_\e}N$. The estimates are obtained by studying qualitatively the geometric identities introduced in Section \ref{sec_auto_red} and taking into account the uniformity on the variable $\e$. 

\begin{lemma}\label{lem_estimate_reducibility}
Let $N\in \nat$, $\omega\in\D(\nu,\tau)$ and $f_{\e,\mu}:\M\rightarrow\M$ be a family of analytic conformally symplectic maps, with $ f_{\e,\mu}^*\Omega = \l(\e)\Omega$, $\mu\in \Lambda\subseteq\complex^d$. Let $\expan {K}{N}\in\Aa\r{\g_N}$ such that $\expan{K_\e} N:\torus^d\rightarrow\M$ is an embedding for any $|\e| \leq \g_N$. Assume also that, for any $|\e| \leq \g_N$,
\begin{itemize}
    \item[\textit{i)}] $\expan {K_\e} N\left(\torus_\r^d\right)\subset \domain (f_{\expan {\e,\mu_\e} N})$ and that there exist $\xi\geq 0$ such that $$\dist\left(\expan {K_\e} N \left(\torus_\r^d\right),\partial\domain (f_{\expan {\e,\mu_\e} N})\right)\geq \xi>0$$
    $$\dist\left(\expan{\mu_\e}{N}, \partial \Lambda\right)\geq \xi> 0$$
    \item[\textit{ii)}] The approximate invariance equation holds $$f_{\e,\expan {\mu_\e} N}\circ \expan {K_\e} N -\expan {K_\e} N\circ T_\omega =E_\e^N \sim \ord{N+1}$$
    \item[\textit{iii)}] \begin{equation}\label{small_cond} \nu^{-1}(aN)^\tau\d^{-(d+1)}\norm{E^N} \r{\g_N} \ll 1 \end{equation} 
    
    \item[\textit{iv)}] \emph{\textbf{HTP2}} The $d\times d$ matrix \begin{multline} 
    E^N_{\Omega, \e}(\theta)\equiv D\expan {K_\e} N (\t+\omega)^\top J\circ \expan {K_\e} N(\t+\omega) D\expan {K_\e} N (\t+\omega) \\ - D(f_{\e,\expan{\mu_\e} N}\circ            \expan {K_\e} N(\t))^\top J\circ            (f_{\expan {\e,\mu_\e} N}\circ              \expan {K_\e} N(\t)) D(f_{\e,\expan{\mu_\e} N}\circ \expan {K_\e} N(\t))
\end{multline}
is a trigonometric  polynomial of degree less than $aN$.
\end{itemize} 

Then 
\begin{equation}\label{order_R}
    \expan {R_\e} N\sim \O(|\e|^{N+1})
\end{equation}
 and for any $0<\delta\leq\r$ we have 
\begin{equation}\label{R}
\norm {\expan R N}{\r-\d}{\g_N}\leq C\nu^{-1}(aN)^\tau\delta^{-(d+1)}\norm {E^N}\r{\g_N}
\end{equation}
where $C=C(d,\norm {D\expan K N}\r{\g_N}, \norm {\expan \N N}\r{\g_N}, \norm {J\circ \expan K N}\r{\g_N} )$.
\end{lemma}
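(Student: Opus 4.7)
The plan is to work directly from the explicit matrix formula \eqref{red_error} for $R^{[\leq N]}_\e$, treating its two column blocks separately. The first block is just $DE^N_\e$, so by the standard Cauchy estimate in $\theta$ one immediately gets $\|DE^N\|_{\rho-\delta,\gamma_N} \leq C\delta^{-1}\|E^N\|_{\rho,\gamma_N}$, and the order $E^N \sim O(|\e|^{N+1})$ transfers to $DE^N$ coefficient-wise. This contribution is subsumed by the target bound $\nu^{-1}(aN)^{\tau}\delta^{-(d+1)}\|E^N\|$, since $d\geq 1$ and the cohomology step introduces the more painful factor.

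The serious work is the second column, $V\circ T_\omega(\tilde B^{[\leq N]}-\lambda \Id) + DK^{[\leq N]}\!\circ T_\omega(\tilde S^{[\leq N]}-S^{[\leq N]})$. Here the pre-factors $V\circ T_\omega$ and $DK\circ T_\omega$ are bounded by the quantities named in the constant $C$, so the task reduces to estimating $\tilde B-\lambda \Id$ and $\tilde S-S$. The algebraic identities following \eqref{red_error} in Section~\ref{sec_auto_red} say $\tilde B-\lambda \Id=\Gamma_\Omega^{[\leq N]}\tilde S$ and $\tilde S-S=-(\N\circ T_\omega)^\top\Gamma\circ T_\omega\,\N\circ T_\omega\,(\tilde B-\lambda \Id)$, where I write $\Gamma_\Omega^{[\leq N]}(\theta):=D K^{[\leq N]}(\theta)^\top J\circ K^{[\leq N]}(\theta)DK^{[\leq N]}(\theta)$. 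Eliminating gives $\tilde S=\bigl(\Id+Q\,\Gamma_\Omega^{[\leq N]}\bigr)^{-1}S$, with $Q$ a product of uniformly bounded factors, and Neumann inversion is legitimate under the smallness hypothesis \eqref{small_cond}. Consequently both $\tilde B-\lambda\Id$ and $\tilde S-S$ are bounded linearly by $\|\Gamma_\Omega^{[\leq N]}\|$.

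Everything then hinges on bounding $\Gamma_\Omega^{[\leq N]}$. The key step is to observe that the conformally symplectic identity $Df^\top(J\circ f)Df=\lambda J$, applied at $K^{[\leq N]}(\theta)$ together with the chain rule, turns the second summand in the definition of $E^N_{\Omega,\e}$ in \textbf{HTP2} into exactly $\lambda\,\Gamma_\Omega^{[\leq N]}(\theta)$, while the first summand is $\Gamma_\Omega^{[\leq N]}(\theta+\omega)$. Thus one obtains the cohomology equation
\begin{equation*}
\lambda(\e)\,\Gamma_\Omega^{[\leq N]}(\theta) - \Gamma_\Omega^{[\leq N]}(\theta+\omega) = -E^N_{\Omega,\e}(\theta),
\end{equation*}
which is of the shape \eqref{cohom}. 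By \textbf{HTP2} its right-hand side is a trigonometric polynomial of degree at most $aN$ in $\theta$, and by substituting $f\circ K^{[\leq N]} = K^{[\leq N]}\circ T_\omega - E^N$ into the definition of $E^N_{\Omega,\e}$ it can be written as a sum of products in which every term carries at least one factor of $E^N$ or $DE^N$, giving $\|E^N_{\Omega,\e}\|_{\rho-\delta/2,\gamma_N}\lesssim \delta^{-1}\|E^N\|_{\rho,\gamma_N}$ (one Cauchy loss for $DE^N$) and the correct order $O(|\e|^{N+1})$. Applying Lemma~\ref{lin-est} entrywise to the no-average part produces $\noaverage{\Gamma_\Omega^{[\leq N]}}$ with the gain $\nu^{-1}(aN)^\tau(\delta/2)^{-d}$; the average part satisfies $(1-\lambda(\e))\overline{\Gamma_\Omega^{[\leq N]}}=\overline{E^N_{\Omega,\e}}$, and on $|\e|\leq\gamma_N$ the factor $|1-\lambda(\e)|^{-1}=|\e|^{-\alpha}$ is dominated by $\gamma_N^{-\alpha}=(2/\nu)(aN)^\tau$, giving the same qualitative bound. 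Combining the two losses yields $\|\Gamma_\Omega^{[\leq N]}\|_{\rho-\delta,\gamma_N}\leq C\nu^{-1}(aN)^\tau\delta^{-(d+1)}\|E^N\|_{\rho,\gamma_N}$, and the targeted estimate \eqref{R} on $R^{[\leq N]}$ follows.

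The two points I expect to need the most care are: (i) justifying the Neumann inversion of $\Id+Q\Gamma_\Omega^{[\leq N]}$, which is exactly where hypothesis \eqref{small_cond} is used, and then absorbing the resulting $1+O(\cdot)$ into a universal constant; and (ii) the average-part argument, since the naive estimate loses a factor $|\e|^{-\alpha}$ that blows up at the origin — this has to be controlled uniformly in $\e$ on the disk $|\e|\leq\gamma_N$, which works only because the radius $\gamma_N$ is tailored so that $\gamma_N^{-\alpha}$ is comparable to the small-divisor factor $\nu^{-1}(aN)^\tau$. Everything else is an algebraic bookkeeping of the formulas \eqref{S_red}, \eqref{red_error}, \eqref{V} together with the Banach-algebra property of $\mathcal A_{\rho,\gamma_N}$.
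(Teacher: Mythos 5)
Your overall skeleton coincides with the paper's: estimate the first column of $R^{[\leq N]}$ by Cauchy, reduce the second column to the Lagrangian error $E^N_L(\theta) = DK^{[\leq N]}(\theta)^\top J\circ K^{[\leq N]}(\theta)DK^{[\leq N]}(\theta)$ (your $\Gamma_\Omega^{[\leq N]}$), recognize that the conformally symplectic property turns the definition of $E^N_{\Omega,\e}$ in \textbf{HTP2} into a cohomology equation of type \eqref{cohom} for $E^N_L$, estimate $E^N_{\Omega,\e}$ by a telescoping manipulation plus one Cauchy loss, and feed the result into Lemma~\ref{lin-est}. That is the same chain of ideas that the paper uses.

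However, there is a genuine gap in your treatment of the average part. You write $(1-\lambda(\e))\overline{\Gamma_\Omega^{[\leq N]}}=\overline{E^N_{\Omega,\e}}$ and then claim that on $|\e|\le\gamma_N$ the factor $|1-\lambda(\e)|^{-1}=|\e|^{-\alpha}$ is ``dominated by $\gamma_N^{-\alpha}$.'' This inequality is backwards: on the disk $|\e|\le\gamma_N$ one has $|\e|^{-\alpha}\ge\gamma_N^{-\alpha}$, with equality only on the boundary, and the factor diverges as $\e\to 0$. A pointwise bound of the average part by this device therefore does not hold uniformly on the disk, and the estimate you claim for $\Gamma_\Omega^{[\leq N]}$ does not follow. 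The correct resolution, which the paper uses tacitly when it invokes Lemma~\ref{lin-est} directly, is that there is no average to estimate: $E^N_{\Omega,\e}$ is the coordinate representation of $(K^{[\leq N]}_\e\circ T_\omega)^*\Omega-(f_{\e,\mu^{[\leq N]}_\e}\circ K^{[\leq N]}_\e)^*\Omega$, a difference of pullbacks of the exact form $\Omega$, hence an exact $2$-form, and exact forms on $\torus^d$ have zero average component-wise. Likewise $E^N_L=(K^{[\leq N]}_\e)^*\Omega$ in coordinates is exact and zero-average. So Lemma~\ref{lin-est} applies as stated (whose hypothesis includes zero average of the right-hand side), the unique zero-average solution it produces is exactly $E^N_L$, and your average split, together with its faulty bound, should be discarded. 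Once that is repaired your argument is essentially the one in the paper; in particular the Neumann inversion you invoke for $(\Id+Q\,\Gamma_\Omega^{[\leq N]})$ under \eqref{small_cond} is consistent with the paper's implicit linearization in \eqref{B}--\eqref{est-S}, and your telescoping bound on $E^N_{\Omega,\e}$ matches the paper's \eqref{omega_error}.
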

\begin{proof}

Writing  $\expan {R_\e} N$ in terms of $\expan{K_\e }{N}$  as in  \eqref{red_error} yields
$$\expan {R_\e} N(\t)=\left[DE^N_\e(\t)\bmid  \expan V N_\e (\t +\omega)\left( B_\e(\t)-\l(\e)\Id\right) + D\expan K N_\e(\t+\omega)\left(\tilde{S}_\e(\t
)-\expan S N_\e(\t)\right)\right]$$
with
\begin{align}
    \expan V N_\e (\t) &\equiv J^{-1}\circ	\expan {K_\e} N(\t) D\expan {K_\e} N(\t)\expan {\N_\e} N(\t) \\
    B_\e(\t)-\l(\e)\Id &\equiv -E^N_{L,\e}(\t +\omega)\expan {S_\e} N(\t) \label{B} \\
    \tilde{S}_\e(\t)-\expan S N_\e(\t) &\equiv -\expan \N N_\e(\t +\omega)^\top \expan\Gamma N_\e(\t+\omega)\expan \N N_\e(\t+\omega)\left( B_\e(\t)-\l(\e)\Id\right) \label{est-S}
\end{align}
where 
\begin{equation}\label{error_L}
E^N_{L,\e}(\t)\equiv D\expan K N_\e(\t)^\top J\circ \expan K N_\e(\t) D\expan K N_\e(\t)
\end{equation} 
is the pull back $(\expan{K_\e}{N})^*\Omega$ written in coordinates and $\expan\Gamma N_\e$ as in \eqref{Gamma_e}. We recall that $J$ is the matrix associated to the symplectic form, see Section \ref{Prelims}. It is easy to estimate the first column of $\expan{R_\e}{N}$ using Cauchy estimates, that is $$\normm{DE_\e^N}{\r-\d} \leq C\d^{-1} \normm{E_\e^N}{\r}$$

To obtain estimates for the second column of $\expan {R_\e} N$, due to \eqref{B} and \eqref{est-S}, it is enough to get estimates of $E^N_L$ . The estimate for $E^N_L$ is obtained using that $f_{\e,\mu}^*\Omega =\l(\e)\Omega$. Note that $E_{\Omega,\e}^N = (\expan{K_\e}{N}\circ T_\omega)^*\Omega -(f_{\e,\expan{\mu_\e}N}\circ \expan{K_\e}N)^*\Omega $ in coordinates and, since $(f_{\e,\expan{\mu_\e}N}\circ \expan{K_\e}N)^*\Omega = \l(\expan{K_\e}{N})^*\Omega $, we have that $E^N_L$ satisfies the equality

\begin{equation} \label{e_L_equation}
E_{L,\e}^N\circ T_\omega-\l(\e)E_{L,\e}^N=E_{\Omega, \e}^N.
\end{equation}
Then, by  Lemma \ref{lin-est} and \textbf{HTP2} we obtain
\begin{equation}\label{lagran_error}
\norm {E_L^N}{\r-\d}{\g_N} \leq C\nu^{-1}(aN)^\tau \delta^{-d} \norm{E_\Omega^N}{\r-\d/2}{\g_N}. 
\end{equation}

To get estimates for $E_\Omega^N$, we follow \cite{Cal-Cel-Lla-13}.
If $h$ and $g$ are smooth maps with range in $\M$, the matrix corresponding to $h^*\Omega -g^*\Omega$ is $$ Dh^\top J\circ h Dh - Dg^\top J\circ g Dg = (Dh^\top - Dg^\top)J\circ h Dh - Dg^\top( J\circ h - J\circ g) Dh +Dg^\top J\circ g(Dh- Dg) $$
Using this formula with $g=f_{\e,\expan{\mu_\e }{N}}\circ \expan{K_\e }{N}$, $h= \expan{K_\e }{N}\circ T_\omega$ and Cauchy estimates one obtains
\begin{equation}\label{omega_error}
 \normm {E^N_{\Omega,\e}}{\r-\d/2}\leq C\delta^{-1}\normm {E_\e^N}\r \end{equation}
which yields $ E^N_{L,\e}, E^N_{\Omega,\e}\sim\ord{N+1}$ and, then, $\expan {R_\e} N\sim\ord{N+1}$ and 
\begin{equation}
    \norm {\expan R N}{\r-\d}{\g_N}\leq C\nu^{-1}(aN)^\tau\delta^{-(d+1)}\norm {E^N}\r{\g_N}.\label{bound_lemma}
\end{equation}
Note that when the matrix $J$ is constant both \textbf{HTP2} and the computations above are significantly simpler than in the general case.
\end{proof}

\begin{remark}
We emphasize that, if $K_0$ satisfies $K_0\circ T_\omega - f_{0, \mu_0}\circ K_0=0$ then $DK_0(\t)^\top J\circ K_0 DK_0(\t) =0$ and  $K_0(\torus^d)$ is a Langrangian manifold, see \cite{Cal-Cel-Lla-13}. This implies that the spaces Range$( DK_0(\t))$ and Range$(J^{-1}\circ K_0(\t)DK_0(\t))$ are transversal and this condition makes $M_0(\t)$ a linear isomorphism. Note that if $E^N_L$ in \eqref{error_L} represents the error of the lagrangian character of $\expan{K_\e}{N}$, then, if $E_L^N$ is small enough the spaces Range$(D\expan{K_\e }{N}(\t))$ and Range$(J^{-1}\circ \expan{K_\e }{N}(\t)D\expan{K_\e }{N}(\t))$ will be transversal and the matrix $\expan{M_\e }{N}$ will define a linear isomorphism. This transversality will be obtained if \eqref{small_cond} is satisfied and it is given by \eqref{lagran_error} and \eqref{omega_error}.
\end{remark}

\subsection{Estimates for the corrections}
In this sections we obtain estimates for the corrections $\expanin \Delta N {2N}$ and $\expanin \sigma N{2N}$, this estimates are obtained by following the steps in Algorithm \ref{algo}. First, Lemma \ref{lem_w_sigma_estimates}, we obtain estimates for the corrections $\Delta_\e$, $\sigma_\e$ and then, using Cauchy estimates, we obtain estimates for the truncations $\expanin \Delta N{2N}$, $\expanin \sigma N{2N}$ , Corollary \ref{lem_est_trunc_ser}. \\

Consider $\C\subseteq \complex ^d/\integer ^d\times \complex ^d$ the  complexification of $\M=\torus ^d\times B$.

\begin{lemma}\label{lem_w_sigma_estimates}
Let $a\in\nat$, $0<\r<1$, and $\d$ such that $0<2\d<\r$. Assume that for any $\e\in \complex$, such that $|\e|<\gamma_N$, $f_{\e,\expan\mu N_\e}:\C\rightarrow\C$ is an analytic conformally symplectic map with $f_{\e,\expan\mu N_\e}^*\Omega=\l(\e)\Omega$. Assume also that $\expan K N\in\Aa\r{\g_N}$ is such that  $\expan K N_\e:\torus_\r^d\rightarrow\complex^d/\integer ^d\times\complex^d$ is an embedding. Assume also that for any $|\e|< \g_N$ we have the following:

\begin{itemize}
    \item[\textit{i)}] $\expan {K_\e} N\left(\torus_\r^d\right)\subset \domain (f_{\e,\expan \mu N})$ and that there exist $\xi\geq 0$ such that $$\dist\left(\expan {K_\e} N\left(\torus_\r^d\right),\partial\domain (f_{\e,\expan \mu N})\right)\geq \xi>0$$
    $$\dist\left(\expan{\mu_\e}{N}, \partial \Lambda\right)\geq \xi $$
    \item[\textit{ii)}] \textbf{\emph{HND}}.  The following non-degeneracy condition holds: $$\det\left(\begin{matrix} \overline{\expan {S_\e} N} & \, &  \overline{\expan {S_\e} N \noaverage {B_{b,\e}}} + \overline{\tilde{A}_{\e,1}^N}  \\  \e^3\Id & \, & \overline{\tilde{A}_{\e,2}^N} \end{matrix}\right)\neq 0$$
    \item[\textit{iii)}] For any $N\in\nat$, the matrices $\noaverage{\expanin{\tilde{E}_{\e,2}}N{2N}}$ and $\noaverage{\tilde{A}_{\e,2}^N}$ defined in \eqref{etilde} and \eqref{atilde}, are trigonometric polynomials of degree less or equal than $aN$.
\end{itemize}
Then, for any $0<r<1$ we have
\begin{equation}\label{W_and_sigma_order}
W_\e\sim\ord {N+1},\quad \sigma_\e\sim\ord{N+1} 
\end{equation}
\begin{equation}\label{w_final_estimate}
    \norm W{\r-\d}{r\g_N} \leq C\nu^{-3}(aN)^{2\tau}\d^{-(\tau +3d)}\frac{r^{N+1}}{1-r} \norm{E^N}\r{\g_N}
\end{equation}
and
\begin{equation}\label{sigma_est_fin}
    \sup_{|\e|\leq r\g_N}|\sigma_\e| \leq C\nu^{-1}(aN)^\tau\d^{-d}\frac{r^{N+1}}{1-r}\norm{E^N}\r{\g_N}
\end{equation}
where $C=C(d,\norm {D\expan K N}\r{\g_N}, \norm {\expan M N}\r{\g_N}, \norm {(\expan M N)^{-1}}\r{\g_N}, \norm {\expan\N N}\r{\g_N}, \T^N)$ and $\T^N$ is defined in \eqref{T}.
\end{lemma}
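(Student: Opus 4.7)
The plan is to follow the steps of Algorithm~\ref{algo} one by one, applying Lemma~\ref{lin-est} to the parametric cohomology equations \eqref{ba}, \eqref{bb} and Lemma~\ref{classic_lin_est} to the standard equation \eqref{w1}, while using hypothesis \textbf{HND} to invert the $2d\times 2d$ matrix in \eqref{w2barsigma}. The order assertion \eqref{W_and_sigma_order} is an immediate consequence of the fact that $\expanin{\tilde{E}_\e}{N}{2N}\sim\ord{N+1}$ (by \eqref{etilde} and assumption~(ii) of Lemma~\ref{lem_estimate_reducibility}, which is included here via the invariance approximation), together with the last clauses of Lemmas~\ref{lin-est} and~\ref{classic_lin_est} guaranteeing that solutions to the cohomology equations preserve the order in $\e$. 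Throughout, the factor $r^{N+1}/(1-r)$ in the final estimates will appear by applying Corollary~\ref{lem_cauchy_est_ser} to the truncated tail $\expanin{E_\e}{N}{2N}$ on the shrunken disk $|\e|\le r\g_N$.

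First I would estimate the auxiliary functions $\noaverage{B_{a,\e}}$, $\noaverage{B_{b,\e}}$. By hypothesis (iii), the right-hand sides of \eqref{ba} and \eqref{bb} are trigonometric polynomials of degree $\le aN$, so Lemma~\ref{lin-est} applies on the disk $|\e|\le\g_N$ with a loss of $\d/3$ in the analyticity strip, giving
\begin{align*}
\norm{\noaverage{B_{a,\e}}}{\r-\d/3}{\g_N} &\le C\nu^{-1}(aN)^\tau\d^{-d}\,\norm{\expanin{\tilde{E}_{\e,2}}{N}{2N}}{\r}{\g_N},\\
\norm{\noaverage{B_{b,\e}}}{\r-\d/3}{\g_N} &\le C\nu^{-1}(aN)^\tau\d^{-d}\,\norm{\tilde{A}_{\e,2}^N}{\r}{\g_N}.
\end{align*}
Next, I use assumption \textbf{HND} — continuity of the determinant in $\e$ ensures invertibility of the matrix in \eqref{w2barsigma} on a small enough disk around $\e=0$, and the inverse is bounded by a constant $\T^N$ (absorbed into the final $C$). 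Solving \eqref{w2barsigma} then yields $\overline{W_{\e,2}}$ and $\sigma_\e$, each bounded (on the strip $\r-\d/3$) by $C\nu^{-1}(aN)^\tau\d^{-d}\norm{\expanin{\tilde{E}_\e}{N}{2N}}{\r}{\g_N}$ plus the product terms $\overline{S_\e\noaverage{B_{a,\e}}}$ and $\overline{S_\e\noaverage{B_{b,\e}}}$, which are controlled by the Banach-algebra property and the previous bounds.

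Having $W_{\e,2}=\noaverage{W_{\e,2}}+\overline{W_{\e,2}}$ with estimate of size $\nu^{-1}(aN)^\tau\d^{-d}\|E^N\|$ (up to constants), I would then feed it into \eqref{w1} and apply Lemma~\ref{classic_lin_est} (this is the standard small-divisor solver; no trig-polynomial structure is needed on the right-hand side) with a loss $\d/3$, producing
\[
\norm{\noaverage{W_{\e,1}}}{\r-2\d/3}{\g_N}\le C\nu^{-1}\d^{-(\tau+d)}\bigl(\norm{\expan{S_\e}{N}W_{\e,2}}{\r-\d/3}{\g_N}+\norm{\expanin{\tilde{E}_{\e,1}}{N}{2N}}{\r}{\g_N}+|\sigma_\e|\,\norm{\tilde{A}_{\e,1}^N}{\r}{\g_N}\bigr).
\]
Multiplying the $\nu^{-1}(aN)^\tau\d^{-d}$ factor coming from $W_{\e,2}$ by the $\nu^{-1}\d^{-(\tau+d)}$ factor from this last step produces the combined power $\nu^{-2}(aN)^{\tau}\d^{-(\tau+2d)}$; one more factor of $\nu^{-1}(aN)^\tau\d^{-d}$ comes from using $\noaverage{B_{a,\e}}$ inside $\overline{W_{\e,2}}$ via \eqref{w2barsigma}, yielding the announced $\nu^{-3}(aN)^{2\tau}\d^{-(\tau+3d)}$. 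The average $\overline{W_{\e,1}}$ from \eqref{uni-cond} is controlled by the already-bounded quantities $\noaverage{W_{\e,1}}$ and $W_{\e,2}$, so it does not change the exponents.

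The main obstacle is the appearance of the small divisor $\l(\e)-e^{2\pi ik\cdot\omega}$ in \eqref{cohom}, which prevents us from using disks larger than $\g_N$. To extract the key factor $r^{N+1}/(1-r)$ we shrink once more to $|\e|\le r\g_N$ with $r<1$: since $\expanin{E_\e}{N}{2N}\sim\ord{N+1}$, Corollary~\ref{lem_cauchy_est_ser} gives $\norm{\expanin{E_\e}{N}{2N}}{\r}{r\g_N}\le\frac{r^{N+1}}{1-r}\norm{E^N}{\r}{\g_N}$, and the same factor is inherited by $\noaverage{B_{a,\e}}$, $\overline{W_{\e,2}}$, $\sigma_\e$, and $W_{\e,1}$ throughout the chain because these all arise linearly from $\expanin{E_\e}{N}{2N}$ (the $\noaverage{B_{b,\e}}$ and $\tilde{A}^N$ contributions do not need this Cauchy estimate because they are multiplied by $\sigma_\e$, which already carries the factor). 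Assembling the bounds and taking suprema over $|\e|\le r\g_N$ produces \eqref{w_final_estimate} and \eqref{sigma_est_fin}. Finally, $W_\e=(W_{\e,1},W_{\e,2})^\top$ is handled component-wise and the matrix norm conventions on $\Aa{\rho}{\g}$ absorb the dimensional constants.
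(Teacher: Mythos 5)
Your proposal follows the paper's proof essentially step for step: estimate $\noaverage{B_{a,\e}},\noaverage{B_{b,\e}}$ via Lemma~\ref{lin-est}, invert the matrix in \eqref{w2barsigma} using \textbf{HND}, solve \eqref{w1} via Lemma~\ref{classic_lin_est}, bound $\overline{W_{\e,1}}$ from \eqref{uni-cond}, and extract the $r^{N+1}/(1-r)$ factor by shrinking to $|\e|\le r\g_N$ and applying Corollary~\ref{lem_cauchy_est_ser}. The only discrepancy is minor bookkeeping: the paper's estimate \eqref{w2_estimate} shows $W_{\e,2}$ already carries the power $\nu^{-2}(aN)^{2\tau}\d^{-2d}$ (coming from the product $\sigma_\e\noaverage{B_{b,\e}}$ inside $\noaverage{W_{\e,2}}$), not a single factor $\nu^{-1}(aN)^\tau\d^{-d}$ as you assert; the ``extra'' factor you attribute to $\noaverage{B_{a,\e}}$ actually lives in $\noaverage{W_{\e,2}}$, and $\overline{W_{\e,2}}$ contributes only one power. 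The total exponent you reach is nevertheless the correct $\nu^{-3}(aN)^{2\tau}\d^{-(\tau+3d)}$, so the argument is sound.
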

\begin{proof}
Given that $\noaverage{\expanin{\tilde{E}_{\e,2}}N{2N}}$ and $\noaverage{\tilde{A}_{\e,2}^N}$ are trigonometric polynomials, by Lemma \ref{lin-est}, \eqref{ba}, and \eqref{bb}; $B_a$ and $B_b$ satisfy the following estimates
\begin{eqnarray}
    \norm{B_a}{\r-\d}{r\g_N} &\leq C\nu^{-1}(aN)^\tau \d^{-d} \norm{\expanin {\tilde{E}_2} N{2N}}{\r}{r\g_N}\nonumber \nonumber\\
		& \leq C\nu^{-1}(aN)^\tau \d^{-d} \norm{\expanin {E} N{2N}}{\r}{r\g_N} \label{B_a_est} 
\end{eqnarray}
and similarly
\begin{equation}\label{B_b_est}
    \norm{B_b}{\r-\d}{r\g_N}\leq C\nu^{-1}(aN)^\tau\d^{-d} \norm{A^N}\r{r\g_N}.
\end{equation}
Taking into account that $W_2=\noaverage{W_2}+\overline{W_2}$ and $\noaverage{W_2} =\noaverage{B_a} +\sigma\noaverage{B_b}$, to have estimates for $W_2$ we need estimates for $\overline{W_2}$ and $\sigma$. Now, according to \eqref{w2barsigma} we have
\begin{equation}\label{inverse_eq}
    \left(\begin{matrix} \overline{W_{\e,2}} \\ \sigma_\e \end{matrix}\right) =
        \left(\begin{matrix} \overline{\expan {S_\e} N} & \, & \overline{\expan {S_\e} N \noaverage {B_{b,\e}}} + \overline{\tilde{A}_{\e,1}^N}  \\  \e^3\Id &\, & \overline{\tilde{A}_{\e,2}^N} \end{matrix}\right)^{-1}  \left(\begin{matrix} -\overline{\expan {S_\e} N \noaverage{B_{a,\e}}} - \overline{\expanin{\tilde{E}_{\e,1}}N{2N}}  \\  -\overline{\expanin{\tilde{E}_{\e,2}}N{2N}} \end{matrix}\right),
\end{equation}
denoting
\begin{equation}\label{T}
\T_\e^N:=\left\| \left(\begin{matrix} \overline{\expan {S_\e} N} & \, & \overline{\expan {S_\e} N \noaverage {B_{b,\e}} } + \overline{\tilde{A}_{\e,1}^N}  \\  \e^3\Id &\, & \overline{\tilde{A}_{\e,2}^N} \end{matrix}\right)^{-1}\right\| \qquad \mbox{and} \qquad \T^N= \sup_{|\e|\leq r\g_N} \T_\e^N 
\end{equation}
from \eqref{inverse_eq} we have 
\begin{equation}\label{w2eq}
    |\sigma_\e|, \left|\overline{W_{\e,2}}\right|\leq \T_\e^N \left(\left| \overline{\expan {S_\e} N \noaverage {B_{a,\e}}} + \overline{\expanin{\tilde{E}_{\e,1}}{N}{2N}}\right| +\left| \overline{\expanin{\tilde{E}_{\e,2}}N{2N}}\right|     \right)\sim \ord{N+1}  
\end{equation}
which yields $\sigma_\e \sim \ord{N+1}$ and $\overline{W_{\e,2}}\sim \ord{N+1}$
because $\noaverage{B_{a,\e}}\sim\ord{N+1}$ and $\expanin{\tilde{E}_\e}N{2N} \sim\ord{N+1}$. 

Thus
\begin{equation*}
\begin{split}
    |\sigma_\e|, |\overline{W_{\e,2}}| &\leq \T_\e^N \left(\left| \overline{\expan {S_\e} N \noaverage {B_{a,\e}}}\right| +\left| \overline{\expanin{\tilde{E}_{\e,1}}{N}{2N}}\right| +\left| \overline{\expanin{\tilde{E}_{\e,2}}N{2N}}\right|     \right)\\ 
    		& \leq C\T^N \left(\normm{ \expan {S_\e} N}{\r} \normm{\noaverage {B_{a,\e}}}{\r-\d} + \normm{\expanin{\tilde{E}_{\e,1}}{N}{2N}}{\r} + \normm{\expanin{\tilde{E}_{\e,2}}{N}{2N}}{\r}\right) 
\end{split}
\end{equation*}
for any $0< \delta <\r$. Thus, using \eqref{etilde} and \eqref{B_a_est}  we obtain
\begin{equation}\label{w2_bar_estimate}
    \sup_{|\e|\leq r\g_N}\left|\overline{W_{\e,2}}\right| \leq C\nu^{-1}(aN)^\tau\d^{-d} \norm{\expanin E N{2N}}\r{r\g_N} 
\end{equation}
\begin{equation}\label{sigma_estimate}
    \sup_{|\e|\leq r\g_N}\left|\sigma_\e\right| \leq C\nu^{-1}(aN)^\tau \d^{-d} \norm {\expanin E N{2N}}\r{r\g_N}.
\end{equation}
For $\noaverage {W_2}= \noaverage{B_a} +\sigma\noaverage{B_b}$ we have 
\begin{equation}\label{w2_average_est}
\begin{split}
    \norm{\noaverage {W_2}}{\r-\d}{r\g_N} &\leq \norm{\noaverage{B_a}}{\r-\d}{r\g_N} + \sup_{|\e|\leq r\g_N}\left|\sigma\right| \norm{\noaverage{B_b}}{\r-\d}{r\g_N} \\
		&\leq C\nu^{-1}(aN)^\tau\d^{-d}\norm{\expanin E N{2N}}\r{r\g_N} +C\nu^{-2}(aN)^{2\tau}\d^{-2d}\norm{A^N}\r{r\g_N} \norm{\expanin E N{2N}}\r{r\g_N}, \\
		& \leq C\nu^{-2}(aN)^{2\tau}\d^{-2d} \norm{\expanin E N{2N}}\r{r\g_N}.
\end{split}
\end{equation}
Thus, combining \eqref{w2_bar_estimate} and \eqref{w2_average_est} we get
\begin{equation}\label{w2_estimate}
    \norm{W_2}{\r-\d}{r\g_N} \leq   C\nu^{-2} (aN)^{2\tau}\d^{-2d}\norm{\expanin E N{2N}}\r{r\g_N}
\end{equation}
The estimates for $\noaverage{W_1} $ come from \eqref{w1} and Lemma \ref{classic_lin_est}, i.e., 
\begin{multline*}
\norm{\noaverage{W_1}}{\r-2\d}{r\g_N}\\     \leq C\nu^{-1}\d^{-(\tau +d)}\left[\norm{\expan S N}{\r-\d}{r\g_N} \norm{W_2}{\r-\d}{r\g_N} +\norm{\expanin {\tilde{E}}N{2N}}{\r-\d}{r\g_N} + \sup_{|\e|\leq r\g_N}\left|\sigma_\e\right| \norm{\tilde{A}^N}{\r-\d}{r\g_N} \right] \end{multline*}
\begin{align*}  \leq & C\nu^{-1}\d^{-(\tau + d)}\left[ \norm{\expan S N}\r{r\g_N} \nu^{-2}(aN)^{2\tau}\d^{-2d} \norm {\expanin E N{2N}}\r{r\g_N}\right. \\
      & \left. +\norm{\left(\expan M N \right)^{-1}}\r{r\g_N} \norm{\expanin E N{2N}}\r{r\g_N} +\norm{A^N}\r{r\g_N} \nu^{-1}(aN)^\tau\r^{-d}\norm{\expanin E N{2N}}\r{r\g_N} \right]
\end{align*}
that is,
\begin{equation}\label{w1_estimate}
    \norm{\noaverage{W_1}}{\r-2\d}{r\g_N}\leq C\nu^{-3}(aN)^{2\tau}\d^{-(\tau +3d)}\norm{\expanin E N{2N}}\r{r\g_N}.
\end{equation}
Finally, the estimate for $\overline{W_1}$ comes from \eqref{uni-cond}, that is 
\begin{eqnarray}
    \sup_{|\e|\leq r\g_N}\left| \overline{W_{\e,1}}\right| &\leq C\left( \norm{\noaverage{W_1}}{\r-\d}{r\g_N} + \norm{W_2}{\r-\d}{r\g_N}\right) \nonumber\\
	&\leq C\nu^{-3}(aN)^{2\tau}\d^{-(\tau +3d)}\norm{\expanin E N{2N}}\r{r\g_N} \label{w1_bar_est}.
\end{eqnarray}
Putting together \eqref{w2_estimate}, \eqref{w1_estimate}, \eqref{w1_bar_est}, and using the Cauchy estimates in Corollary \ref{lem_cauchy_est_ser} yields the claimed estimate for $W$.
\end{proof}

\begin{cor}\label{lem_est_trunc_ser}
Assuming the hypothesis of Lemma \ref{lem_estimate_reducibility} and Lemma \ref{lem_w_sigma_estimates}, for any $0<\d<\r$ and $0<r<1$ we have
\begin{equation}
    \norm{\expanin \Delta N {2N}}{\r-\d}{r\g_N}  \leq C\nu^{-3}(aN)^{2\tau}\delta^{-(\tau + 3d)} \frac{r^{N+1}}{(1-r^{1/2})^2}\norm{E^N}{\r}{\g_N} 
\end{equation}
\begin{equation}
    \sup_{|\e|\leq r\g_N}\left|\expanin	{\sigma_\e} N{2N}\right|  \leq C\nu^{-1}(aN)^\tau\d^{-d} \frac{r^{N+1}}{(1-r^{1/2})^2}\norm{E^N}{\r}{\g_N}
\end{equation}
Moreover,
\begin{equation}
    \norm{\expanin \Delta  {2N}\infty}{\r-\d}{r\g_N} \leq C\nu^{-3}(aN)^{2\tau}\delta^{-(\tau + 3d)} \frac{r^{\frac{3}{2} N+1}}{(1-r^{1/2})^2}\norm{E^N}{\r}{\g_N}
\end{equation}
\begin{equation}
    \sup_{|\e|\leq r\g_N}\left|\expanin	{\sigma_\e} {2N}\infty \right| \leq C\nu^{-1}(aN)^\tau\d^{-d} \frac{r^{\frac{3}{2} N+1}}{(1-r^{1/2})^2}\norm{E^N}{\r}{\g_N}
\end{equation}
\end{cor}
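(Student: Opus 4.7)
The plan is to combine the estimates on $W_\e$ and $\sigma_\e$ furnished by Lemma \ref{lem_w_sigma_estimates} with the Cauchy–style estimates in the $\e$ variable provided by Corollary \ref{lem_cauchy_est_ser}. Since $\Delta_\e = \expan{M_\e}{N}W_\e$ and $\expan{M_\e}{N}\in \Aa{\r}{\g_N}$ is controlled by the constants already absorbed into the proof of Lemma \ref{lem_w_sigma_estimates}, the Banach algebra property gives, at the cost of enlarging $C$,
\[
\norm{\Delta}{\r-\d}{r\g_N}\;\leq\;\norm{\expan{M}{N}}{\r-\d}{r\g_N}\,\norm{W}{\r-\d}{r\g_N},
\]
so any bound on $W$ transfers immediately to one for $\Delta$ with the same $r$-dependence.

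The key trick is to apply Lemma \ref{lem_w_sigma_estimates} not on the target radius $r\g_N$ but on the intermediate radius $r^{1/2}\g_N$, and then to use Corollary \ref{lem_cauchy_est_ser} to pass from the disk of radius $r^{1/2}\g_N$ down to the disk of radius $r\g_N = r^{1/2}\cdot r^{1/2}\g_N$. First, invoking Lemma \ref{lem_w_sigma_estimates} with $r$ replaced by $r^{1/2}$ yields
\[
\norm{W}{\r-\d}{r^{1/2}\g_N}\;\leq\; C\nu^{-3}(aN)^{2\tau}\d^{-(\tau+3d)}\,\frac{r^{(N+1)/2}}{1-r^{1/2}}\,\norm{E^N}{\r}{\g_N},
\]
and similarly
\[
\sup_{|\e|\le r^{1/2}\g_N}|\sigma_\e|\;\leq\; C\nu^{-1}(aN)^{\tau}\d^{-d}\,\frac{r^{(N+1)/2}}{1-r^{1/2}}\,\norm{E^N}{\r}{\g_N}.
\]
Since $\Delta_\e,\sigma_\e\sim \ord{N+1}$, Corollary \ref{lem_cauchy_est_ser} applied with parameters $a=N$, $b=2N$ and shrinking factor $r^{1/2}$ gives
\[
\norm{\expanin{\Delta}{N}{2N}}{\r-\d}{r\g_N}\;\leq\;\frac{(r^{1/2})^{N+1}}{1-r^{1/2}}\,\norm{\Delta}{\r-\d}{r^{1/2}\g_N},
\]
and an identical estimate for $\expanin{\sigma}{N}{2N}_\e$. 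Multiplying the two displayed factors produces exactly the denominator $(1-r^{1/2})^2$ and the numerator $r^{N+1}$ claimed in the first pair of inequalities.

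For the tail $\expanin{\Delta}{2N}{\infty}$, one proceeds identically but uses Corollary \ref{lem_cauchy_est_ser} with $a=2N$, $b=\infty$ in the second step, so that $(r^{1/2})^{N+1}$ is replaced by $(r^{1/2})^{2N+1}$; combined with the factor $r^{(N+1)/2}$ coming from the intermediate step this yields $r^{(N+1)/2}\cdot r^{(2N+1)/2}= r^{(3N+2)/2}=r^{3N/2+1}$, again divided by $(1-r^{1/2})^2$, which is exactly the second pair of inequalities. The analogous computation for $\expanin{\sigma}{2N}{\infty}_\e$ gives the remaining estimate.

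There is no genuine difficulty here: the argument is entirely bookkeeping. The only point requiring care is that the constant $C$ in Lemma \ref{lem_w_sigma_estimates} depends on norms of $\expan{K_\e}{N}$, $\expan{M_\e}{N}$, $\expan{\N_\e}{N}$ and $\T^N$ computed on $\Aa{\r}{\g_N}$, and these quantities must also be controlled on $\Aa{\r}{r^{1/2}\g_N}$; this is automatic since $r^{1/2}\g_N\le \g_N$ and the norms are monotone in the second radius. Once this is observed, no new estimate has to be established, and the corollary follows by the two-step Cauchy argument described above.
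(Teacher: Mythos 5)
Your proof is correct and follows essentially the same two-step route as the paper: apply Lemma \ref{lem_w_sigma_estimates} on an intermediate ball and then Corollary \ref{lem_cauchy_est_ser} to shrink down to the target ball, with the geometric factors multiplying out to give the stated numerator and the denominator $(1-r^{1/2})^2$. The paper's own proof carries out the computation with shrinkage parameters $r$ and $r$ (arriving at the ball $r^2\g_N$ with $(1-r)^{-2}$ factors) and then the corollary as stated follows by the substitution $r\mapsto r^{1/2}$; you simply make that substitution explicit from the outset by working with $r^{1/2}$ directly, which is a cosmetic difference only.
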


\begin{proof}
Using the Cauchy estimates as in Corollary \ref{lem_cauchy_est_ser} and the estimates in Lemma \ref{lem_w_sigma_estimates} one obtains
\begin{align*}
\norm {\expanin \Delta {2N}\infty}{\r-\d}{r^2\g_N} & \leq \frac{r^{2N+1}}{(1-r)} \norm \Delta{\r-\d}{r\g_N} \\
	& \leq C\frac{r^{2N+1}}{1-r}\nu^{-3}(aN)^{2\tau}\d^{-(\tau +3d)}\frac{r^{N+1}}{1-r} \norm{E^N}{\r}{\g_N} \\
	& = C\frac{r^{3N+2}}{(1-r)^2}\nu^{-3}(aN)^{2\tau}\d^{-(\tau +3d)} \norm{E^N}{\r}{\g_N}
\end{align*}
and
\begin{align*}
\sup_{|\e|\leq r^2\g_N}\left| \expanin {\sigma_\e}{2N}\infty\right| & \leq \frac{r^{2N+1}}{1-r}\sup_{|\e|\leq r\g_N}\left| \sigma_\e\right| \\
 	&\leq \frac{r^{2N+1}}{1-r} C\nu^{-1}(aN)^\tau\d^{-d}\frac{r^{N+1}}{1-r} \norm{E^N}{\r}{\g_N} \\
 	& = C\nu^{-1}(aN)^\tau\d^{-d}\frac{r^{3N+2}}{(1-r)^2} \norm{E^N}{\r}{\g_N}
\end{align*}
The other estimates are obtained similarly.
\end{proof}

\subsection{Non-linear estimates for the quasi-Newton method.}
The quasi-Newton procedure in Algorithm \ref{algo} can also be described using a convenient operator notation. Defining the error functional 
\begin{equation}\label{def_func_error}
    \E[K_\e,\mu_\e] = f_{\e,\mu_\e}\circ K_\e - K_\e\circ T_\omega
    \end{equation}
and assuming $\Delta$ and $\sigma$ are \emph{small} enough, the Taylor expansion of $\E[ K  +\Delta,\mu  +\sigma]$ is given by 
\begin{equation}\label{taylor_exp} 
    \E[ K  +\Delta, \mu  +\sigma]=  \E[ K ,\mu ] +D_1\E[ K,\mu ]\Delta +D_2\E[ K ,\mu ]\sigma + \R[\Delta,\sigma; K , \mu ] 
\end{equation}
where the Frechet derivatives are given by
\begin{align}
    D_1\E[ K_\e ,\mu_\e ]\Delta_\e &= \left(Df_{\e, \mu_\e}\circ K_\e\right)\Delta_\e -\Delta_\e\circ T_\omega \\ 
    D_2\E[ K_\e ,\mu_\e ]\sigma_\e &= \left(D_\mu f_{\e, \mu_\e }\circ K_\e\right)\sigma_\e 
    \end{align}
and $\R$ is the remainder of the Taylor expansion. Note that $\E[\expan{K_\e}N, \expan{\mu_\e}N] = E^N_\e$, with this notation the \emph{classic} Newton method would consist in finding a correction $(\expanin{\Delta_\e}N{2N}, \expanin{\mu_\e }N{2N}) $ such that  \begin{equation}  
\E[\expan{K_\e}{N} ,\expan{\mu_\e}{N} ]+ D_1\E[ \expan{K_\e}{N} ,\expan{\mu_\e}{N} ]\expanin{\Delta_\e}N{2N} +D_2\E[ \expan{K_\e}{N} ,\expan{\mu_\e}{N} ]\expanin{\sigma_\e}N{2N}=0. \label{clas_new_met_func}
\end{equation}
As it was explained before, in Section \ref{newton_method}, the corrections we construct with Algorithm \ref{algo} do not satisfy \eqref{clas_new_met_func} but they solve an approximate equation \eqref{quasi_newton}. The following Lemmas give estimates for the error functional evaluated in the corrected unknowns. First, Lemma \ref{lem_tayl_expan_1}, we give estimates for the error $\E[\expan K N +\Delta, \expan \mu N +\sigma]$ and then, using Cauchy estimates, we obtain the estimates for the error evaluated in the truncated corrections, $ \E[\expan K N +\expanin\Delta N{2N}, \expan \mu N + \expanin \sigma N{2N}] $, Proposition \ref{lem_taylor_est}. 

\begin{remark}
We emphasize that to be able to compute $\E[K+\Delta, \mu+\sigma]$ we need both $\Delta$ and $\sigma$ to be \emph{small enough}, so the compositions in \eqref{def_func_error} are well defined. In particular $\Delta$ and $\sigma$ need to satisfy $\| \Delta\| , | \sigma | \leq \xi$ and we need to choose the domain loss. In Section \ref{sec_proof_main}, Lemma \ref{induction}, we give smallness conditions on the initial error which will guarantee that the compositions will be defined at any step of the iteration.  This is very standard in KAM theory.
\end{remark}

\begin{lemma}\label{lem_est_lin_part}
Assume $0< r<1$ and $0<\delta\leq \r$. Then, under the hypothesis of  Lemma \ref{lem_estimate_reducibility} and Lemma \ref{lem_w_sigma_estimates} one has 

\begin{equation}\label{lin_order_in_eps}
\E[\expan {K_\e} N,\expan{\mu_\e} N] +D_1\E[\expan {K_\e} N,\expan{\mu_\e} N]\Delta_\e +D_2\E[\expan {K_\e} N,\expan{\mu_\e} N]\sigma_\e \sim\ord{2N+1}
\end{equation}
and
\begin{multline}\label{est_lin_part}
\norm{\,\E[\expan K N,\expan{\mu} N] +D_1\E[\expan {K} N,\expan\mu N] \Delta +D_2\E[\expan K N,\expan\mu N]\sigma\,}{\r-\d}{r\g_N} \\
\leq \frac{r^{2N+1}}{1-r}\norm{E^N}\r{\g_N} + C\nu^{-4}(aN)^{3\tau}\d^{-(\tau +4d +1)}\frac{r^{N+1}}{1-r}\norm{E^N}\r{\g_N} ^2 \\
\end{multline}
\end{lemma}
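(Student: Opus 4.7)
The plan is to substitute the corrections $\Delta_\e=\expan{M_\e}{N}W_\e$ and $\sigma_\e$ constructed in Algorithm~\ref{algo} into the linear part of the Taylor expansion and exploit the defining equation~\eqref{quasi_newton} of the quasi-Newton step in order to produce explicit cancellations. More precisely, first I would rewrite the action of the derivative using the approximate reducibility identity~\eqref{red_error_eq_1}: since $Df_{\e,\expan{\mu_\e}{N}}\circ\expan{K_\e}{N}\,\expan{M_\e}{N}=\expan{M_\e}{N}\circ T_\omega\left(\begin{smallmatrix} \Id & \expan{S_\e}{N}\\0 & \l\Id\end{smallmatrix}\right)+\expan{R_\e}{N}$, substituting $\Delta_\e=\expan{M_\e}{N}W_\e$ yields
\[
D_1\E[\expan{K_\e}{N},\expan{\mu_\e}{N}]\Delta_\e=\expan{M_\e}{N}\circ T_\omega\Bigl[\left(\begin{smallmatrix}\Id & \expan{S_\e}{N}\\0 & \l\Id\end{smallmatrix}\right)W_\e-W_\e\circ T_\omega\Bigr]+\expan{R_\e}{N}W_\e.
\]

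Adding $D_2\E\sigma_\e=(D_\mu f_{\e,\expan{\mu_\e}{N}}\circ\expan{K_\e}{N})\sigma_\e$ and $\E[\expan{K_\e}{N},\expan{\mu_\e}{N}]=E^N_\e$ and invoking \eqref{quasi_newton}, which was designed precisely so that the bracketed expression plus the $\sigma_\e$-term equals $-\expanin{E_\e}{N}{2N}$, I obtain the clean identity
\[
\E[\expan{K_\e}{N},\expan{\mu_\e}{N}]+D_1\E\,\Delta_\e+D_2\E\,\sigma_\e=\bigl(E^N_\e-\expanin{E_\e}{N}{2N}\bigr)+\expan{R_\e}{N}W_\e.
\]
The first summand on the right is exactly the tail $\sum_{j\geq 2N+1}E_j\e^j$, so it is $\ord{2N+1}$; the second is $\ord{2N+2}$ since $\expan{R_\e}{N}\sim\ord{N+1}$ (by Lemma~\ref{lem_estimate_reducibility}) and $W_\e\sim\ord{N+1}$ (by Lemma~\ref{lem_w_sigma_estimates}). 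This proves \eqref{lin_order_in_eps}.

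For the quantitative estimate \eqref{est_lin_part}, I would bound the two summands separately. The tail is controlled by Corollary~\ref{lem_cauchy_est_ser} applied to $E^N_\e$ with parameters $(N,2N)$: this gives
\[
\norm{E^N_\e-\expanin{E_\e}{N}{2N}}{\r}{r\g_N}\leq \tfrac{r^{2N+1}}{1-r}\norm{E^N}{\r}{\g_N}.
\]
For $\expan{R_\e}{N}W_\e$, I would split the domain loss as $\d=\d/2+\d/2$ and use submultiplicativity of the Banach algebra norm together with the estimates \eqref{R} of Lemma~\ref{lem_estimate_reducibility} (losing $\d/2$ in $\t$, no loss in $\e$) and \eqref{w_final_estimate} of Lemma~\ref{lem_w_sigma_estimates} (losing $\d/2$ in $\t$ and a factor $r$ in $\e$). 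The resulting product of bounds gives $C\nu^{-4}(aN)^{3\tau}\d^{-(\tau+4d+1)}\tfrac{r^{N+1}}{1-r}\norm{E^N}{\r}{\g_N}^2$, which combines with the tail estimate to yield \eqref{est_lin_part}.

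No real obstacle is expected: the algebraic cancellation is dictated by the construction of the quasi-Newton step, and the rest is bookkeeping of domain losses and powers of $r$. The only point requiring mild care is to ensure that $\Delta_\e$ and $\sigma_\e$ are small enough (using the hypotheses of Lemma~\ref{lem_estimate_reducibility}) for the compositions implicit in $\E$ to be defined on the shrunk domains, which will be enforced globally in Section~\ref{sec_proof_main} but does not enter the present algebraic identity.
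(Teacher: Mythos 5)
Your proposal is correct and follows essentially the same route as the paper: substitute $\Delta_\e=\expan{M_\e}{N}W_\e$, use the approximate reducibility identity \eqref{red_error_eq_1} together with the quasi-Newton equation \eqref{quasi_newton} to obtain $\expanin{E_\e}{2N}{\infty}+\expan{R_\e}{N}W_\e$, bound the tail via Corollary \ref{lem_cauchy_est_ser}, and bound the product term by combining the estimates \eqref{R} and \eqref{w_final_estimate}. The only cosmetic difference is that you split the domain loss $\d/2+\d/2$ for the product, whereas the paper simply uses the estimates for $\expan{R}{N}$ and $W$ already available at $\r-\d$ and multiplies (no split is needed, but your version only changes the constant $C$).
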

\begin{proof}
Note that with the operator notation introduced at the beginning of this section we  have  $ \E(\expan K N,\expan\mu N)=E^N$. Using \eqref{red_error_eq_1} and taking into account that $\Delta_\e= \expan {M_\e} N W_\e$ and that $W_\e$ satisfies \eqref{quasi_newton} we have 
\begin{align}
  &\E[\expan {K_\e} N,\expan{\mu_\e} N] +D_1\E[\expan {K_\e} N,\expan{\mu_\e} N]\Delta_\e +D_2\E[\expan {K_\e} N,\expan{\mu_\e} N]\sigma_\e \nonumber \\
 	&\quad=E_\e^N +\left(Df_{\e,\expan \mu N}\circ \expan{K_\e}{N}\right)\Delta_\e -\Delta_\e \circ T_\omega + \left(D_\mu f_{\e,\expan \mu N}\circ \expan{K_\e }{N}\right)\sigma_\e -\expan {R_\e} N\left(\expan {M_\e} N\right)^{-1}\Delta_\e  \nonumber \\
	& \quad\quad+\expan {R_\e} N\left(\expan {M_\e} N\right)^{-1}\Delta_\e \nonumber  \\
	&\quad=E_\e^N + \expan {M_\e} N\circ T_\omega\left(\begin{matrix} \Id & \expan {S_\e} N \\ 0 & \l(\e)\Id\end{matrix}\right)\left(\expan {M_\e} N\right)^{-1}\Delta_\e -\Delta_\e\circ T_\omega +\left(D_\mu f_{\e, \expan\mu N}\circ \expan {K_\e} N\right)\sigma_\e \nonumber \\
	&\quad	\quad+\expan {R_\e} N \left(\expan {M_\e} N\right)^{-1}\Delta_\e \nonumber \\ 
    &\quad =E_\e^N -\expanin {E_\e} N{2N} + \expan {R_\e} N W_\e  \\
	&\quad = \expanin {E_\e}{2N}\infty + \expan {R_\e} N W_\e\sim \ord{2N+1} \nonumber
\end{align}
where $\expanin {E_\e} {2N}{\infty}= \sum_{n=2N+1}^{\infty} E_n\e^n$. Note that the order of $\e$ in the last line follows from the definition of $\expanin E {2N}\infty$, \eqref{order_R}, and \eqref{W_and_sigma_order}. 

Then, using the Cauchy estimates of Corollary \ref{lem_cauchy_est_ser}, Lemma \ref{lem_estimate_reducibility}, and Lemma \ref{lem_w_sigma_estimates} one obtains
\begin{align*}
    &\norm{\E[\expan {K} N,\expan{\mu} N] +D_1\E[\expan {K} N,\expan{\mu} N]\Delta +D_2\E[\expan {K} N,\expan{\mu} N]\sigma}{\r-\d}{r\g_N}\\
    & \qquad \leq \norm{\expanin E{2N}\infty}{\r-\d}{r\g_N} +\norm {\expan RN}{\r-\d}{r\g_N}\norm W{\r-\d}{r\g_N}  \\
	& \qquad \leq \frac{r^{2N+1}}{1-r}\norm{E^N}\r{\g_N} + C\nu^{-4}(aN)^{3\tau}\d^{-(\tau +4d +1)}\frac{r^{N+1}}{1-r}\norm{E^N}\r{\g_N}^2
\end{align*}
\end{proof}
\begin{lemma}\label{lem_tayl_expan_1}
Assume $0< r<1$ and $0<\delta\leq \r$. Then, under the hypothesis of Lemma \ref{lem_w_sigma_estimates} and Lemma \ref{lem_estimate_reducibility} we have 
\begin{equation}\label{order_error_corrected_1}
\E(\expan {K_\e} N +\Delta_\e, \expan {\mu_\e} N+\sigma_\e)\sim\ord{2N+1}
\end{equation}
and
\begin{equation}
\norm{\E[\expan K N +\Delta, \expan \mu N+\sigma]}{\r-\d}{r\g_N} 
\leq \frac{r^{2N+1}}{1-r}\norm{E^ N}\r{\g_N} +C\nu^{-6}(aN)^{4\tau}\d^{-(2\tau +6d)}\frac{r^{N+1}}{1-r} \norm{E^N}\r{\g_N}^2
\end{equation}
where $C=C\left(\norm{D\expan{K}{N}}{\r }{\g_N}, \norm{D^2f_{\expan{\mu}{N}}\circ \expan{K}{N}}{\r}{\gamma_N}, \norm{D_\mu^2f_{\expan{\mu}{N}}\circ \expan{K}{N}}{\r}{\gamma_N} \right)$. 

\end{lemma}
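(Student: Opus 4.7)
The plan is to decompose $\E[\expan{K_\e}{N}+\Delta_\e,\expan{\mu_\e}{N}+\sigma_\e]$ using the Taylor expansion \eqref{taylor_exp} into a linear part (already estimated by Lemma \ref{lem_est_lin_part}) and a Taylor remainder $\R[\Delta_\e,\sigma_\e;\expan{K_\e}{N},\expan{\mu_\e}{N}]$ which is at least quadratic in $(\Delta_\e,\sigma_\e)$. I would then insert the bounds from Lemma \ref{lem_w_sigma_estimates} to control $\R$ and add the two contributions. For the order-in-$\e$ claim \eqref{order_error_corrected_1}, Lemma \ref{lem_est_lin_part} gives the linear part is $\ord{2N+1}$, and since $\Delta_\e,\sigma_\e\sim\ord{N+1}$ and $\R$ is quadratic, $\R\sim\ord{2(N+1)}\subset\ord{2N+1}$.

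For the quantitative estimate, I would bound the remainder by a standard integral form of Taylor's theorem applied to the composition $f_{\e,\mu+\sigma}\circ(K+\Delta)$; note that the $-\Delta\circ T_\omega$ contribution to $\E$ is linear in $\Delta$ and so does not enter $\R$. This gives
\begin{equation*}
\norm{\R}{\r-\d}{r\g_N} \leq C\left(\normm{D^2 f}{}\cdot\norm{\Delta}{\r-\d}{r\g_N}^{\,2}+\normm{D_\mu Df}{}\norm{\Delta}{\r-\d}{r\g_N}\sup|\sigma_\e|+\normm{D_\mu^2 f}{}\sup|\sigma_\e|^2\right),
\end{equation*}
the $f$-norms being taken on a fixed complex neighborhood of $\expan{K_\e}{N}(\torus^d_{\r-\d})\times\{\expan{\mu_\e}{N}\}$ which is admissible thanks to hypothesis \emph{i)} of Lemma \ref{lem_estimate_reducibility}. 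Substituting \eqref{w_final_estimate} and \eqref{sigma_est_fin}, the dominant factor is $\norm{\Delta}{\r-\d}{r\g_N}^{\,2}$, producing the coefficient $C\nu^{-6}(aN)^{4\tau}\d^{-(2\tau+6d)}$ multiplying $\left(\tfrac{r^{N+1}}{1-r}\right)^{2}\norm{E^N}{\r}{\g_N}^{2}$. This quadratic contribution swallows the quadratic term coming from \eqref{est_lin_part} (whose coefficient $\nu^{-4}(aN)^{3\tau}\d^{-(\tau+4d+1)}$ is dominated by $\nu^{-6}(aN)^{4\tau}\d^{-(2\tau+6d)}$ for all admissible $\nu,N,\d$), while the purely linear contribution $\tfrac{r^{2N+1}}{1-r}\norm{E^N}{\r}{\g_N}$ is preserved.

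The only real obstacle is verifying that the compositions entering $\E[\expan{K_\e}{N}+\Delta_\e,\expan{\mu_\e}{N}+\sigma_\e]$ are well defined, i.e.\ that $\expan{K_\e}{N}+\Delta_\e$ maps $\torus^d_{\r-\d}$ into $\domain(f_{\e,\expan{\mu_\e}{N}+\sigma_\e})$ and that $\expan{\mu_\e}{N}+\sigma_\e\in\Lambda$; this is guaranteed by the distance-to-boundary hypothesis \emph{i)} of Lemma \ref{lem_w_sigma_estimates} together with a smallness requirement of the form $\nu^{-3}(aN)^{2\tau}\d^{-(\tau+3d)}\norm{E^N}{\r}{\g_N}<\xi$, which also ensures convergence of the Taylor expansion of $f$ in the relevant polydisc. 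Such a smallness condition is the analogue of \eqref{small_cond} and will be imposed at each step of the iteration in Section \ref{sec_proof_main}. Up to absorbing the $(1-r)$ factors into the constants in a standard way, this yields exactly the stated estimate.
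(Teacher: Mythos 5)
Your proposal follows the paper's proof essentially exactly: decompose $\E[\expan KN+\Delta,\expan\mu N+\sigma]$ via the Taylor expansion \eqref{taylor_exp}, bound the linear part by Lemma \ref{lem_est_lin_part}, bound the remainder $\R$ quadratically in $(\Delta,\sigma)$ using the norms of the second derivatives of $f$, substitute the estimates from Lemma \ref{lem_w_sigma_estimates}, and absorb the dominated coefficient from \eqref{est_lin_part}. The paper is slightly more laconic (it bounds $\|\R\|\le C(\|\Delta\|^2+|\sigma|^2)$ without writing the $D_\mu Df$ cross term, which is anyway controlled by the two squares), but the route is the same, including the order-in-$\e$ argument and the deferral of the well-definedness/smallness condition to Section \ref{sec_proof_main}.
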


\begin{proof}
Note that $\R[\expan {K_\e}N,\expan{\mu_\e} N,\Delta_\e,\sigma_\e]$ in \eqref{taylor_exp} can be estimated using Taylor estimates for the remainder, that is
    \begin{equation}\label{taylor_error_1}
    \normm {\R_\e}{\r}  \leq C\left(\normm{\Delta_\e}{\r}^2 +|\sigma_\e|^2 \right)
    \end{equation}
where $C$ is a constant depending on the norms of the second derivatives of $f_{\e,\mu}$ evaluated at $\expan{K_\e }{N}$ and $\expan{\mu_\e }{N}$. \\
Since $f_{\e,\mu}$ is assumed to be analytic it is natural to expect the quantities $\norm{D^2f_{\expan{\mu}{N}}\circ \expan{K}{N}}{\r}{\gamma_N}$, $ \norm{D_\mu^2f_{\expan{\mu}{N}}\circ \expan{K}{N}}{\r}{\gamma_N}$ to be close to $\norm{D^2f_{\expan{\mu}{N_0}}\circ \expan{K}{N_0}}{\r_0}{\gamma_{N_0}}$, $ \norm{D_\mu^2f_{\expan{\mu}{N_0}}\circ \expan{K}{N_0}}{\r_0}{\gamma_{N_0}}$, at the first step of the iterations. For now, we assume that $C$ is uniform constant. In Section \ref{sec_proof_main}, Lemma \ref{induction}, we give sufficient conditions on the initial error of the iteration that imply that $C$ can be taken as an uniform constant during all the iterations.

Note that \eqref{taylor_error_1} yields $\R_\e\sim\ord{2N+2}$. This, together with \eqref{lin_order_in_eps}, gives \eqref{order_error_corrected_1}. Moreover, taking sup with respect to $\e$ one obtains
\begin{align*}
\norm \R{\r-\d}{r\g_N} & \leq C\left(\norm\Delta{\r-\d}{r\g_N}^2 +\sup_{|\e|\leq r\g_N}|\sigma|^2 \right) \\
	& \leq C\left(\norm{\expan M N}\r{\g_N}^2 \norm W{\r-\d}{r\g_N} + \sup_{|\e|\leq r\g_N}|\sigma|^2 \right) \\
	& \leq C \left(\nu^{-6}(aN)^{4\tau}\d^{-(2\tau +6d)}\frac{r^{2N+2}}{(1-r)^2}\norm{E^N}\r{r\g_N} \right. + \left. \nu^{-2}(aN)^{2\tau}\d^{-2d}\frac{r^{2N+N}}{(1-r)^2} \norm{E^N}\r{r\g_N}\right) \\
	 & \leq C\nu^{-6}(aN)^{4\tau}\d^{-(2\tau +6d)}\frac{r^{2N+2}}{(1-r)^2}  \norm{E^N}\r{r\g_N}^2
\end{align*}
where in the third line we use the inequalities in Lemma \ref{lem_w_sigma_estimates}. Finally, this inequality, Lemma \ref{lem_est_lin_part}, and \eqref{taylor_exp} give the result.
\end{proof}

Note that the estimates above are done for the analytic functions $\Delta$ and $\sigma $. It is only left to get the respective estimates for the truncations $\expanin{\Delta}{N}{2N}$ and $\expanin{\sigma}{N}{2N}$, which are an easy consequence of the Cauchy inequalities and are given in the following propositions.

\begin{proposition}\label{lem_lin_est_trunc}
Assuming the hypothesis of Lemma \ref{lem_estimate_reducibility} and Lemma \ref{lem_w_sigma_estimates}, for any $0<\d<\r$ and $0<r<1$ we have
\begin{equation} \label{order_lin_part}
    \E[\expan {K_\e}N,\expan{\mu_\e} N] + D_1\E[\expan {K_\e}N,\expan{\mu_\e} N]\expanin{\Delta_\e} N{2N} + D_2\E[\expan {K_\e}N,\expan{\mu_\e} N]\expanin{\sigma_\e} N{2N} \sim \ord{2N+1}
\end{equation} and
\begin{align}
     &\norm{\E[\expan {K}N,\expan{\mu} N] + D_1\E[\expan {K}N,\expan\mu N]\expanin\Delta N{2N} + D_2\E[\expan KN,\expan\mu N]\expanin\sigma N{2N}}{\r-\d}{r\g_N}\nonumber \\
	& \qquad\qquad\leq C\nu^{-3}(aN)^{2\tau}\d^{-(\tau +3d)}\frac{r^{\frac{3}{2}N+1}}{(1-r^{1/2})^2} \norm{E^N}\r{\g_N} + C\nu^{-4}(aN)^{3\tau}\d^{-(\tau +4d +1)}\frac{r^{N+1}}{1-r} \norm{E^N}\r{\g_N}^2 \label{lin_est_N_2N}
\end{align}
\end{proposition}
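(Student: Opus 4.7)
The plan is to reduce to Lemma \ref{lem_est_lin_part} by writing the truncations as $\expanin{\Delta_\e}{N}{2N} = \Delta_\e - \expanin{\Delta_\e}{2N}{\infty}$ and $\expanin{\sigma_\e}{N}{2N} = \sigma_\e - \expanin{\sigma_\e}{2N}{\infty}$, and then controlling the tail pieces $\expanin{\Delta_\e}{2N}{\infty}$, $\expanin{\sigma_\e}{2N}{\infty}$ by means of Corollary \ref{lem_est_trunc_ser}. Concretely, I would substitute to obtain the algebraic identity
\begin{align*}
& \E[\expan{K_\e}{N},\expan{\mu_\e}{N}] + D_1\E[\expan{K_\e}{N},\expan{\mu_\e}{N}]\expanin{\Delta_\e}{N}{2N} + D_2\E[\expan{K_\e}{N},\expan{\mu_\e}{N}]\expanin{\sigma_\e}{N}{2N} \\
& \quad = \Bigl\{\E[\expan{K_\e}{N},\expan{\mu_\e}{N}] + D_1\E[\expan{K_\e}{N},\expan{\mu_\e}{N}]\Delta_\e + D_2\E[\expan{K_\e}{N},\expan{\mu_\e}{N}]\sigma_\e\Bigr\} \\
& \qquad {} - D_1\E[\expan{K_\e}{N},\expan{\mu_\e}{N}]\expanin{\Delta_\e}{2N}{\infty} - D_2\E[\expan{K_\e}{N},\expan{\mu_\e}{N}]\expanin{\sigma_\e}{2N}{\infty}.
\end{align*}

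The claim \eqref{order_lin_part} about the order in $\e$ is then immediate: the bracketed expression is $\ord{2N+1}$ by \eqref{lin_order_in_eps} of Lemma \ref{lem_est_lin_part}, while by their very definition both $\expanin{\Delta_\e}{2N}{\infty}$ and $\expanin{\sigma_\e}{2N}{\infty}$ are $\ord{2N+1}$, and the Frechet derivatives $D_1\E,D_2\E$ (which are multiplication operators by $Df_{\e,\expan{\mu_\e}{N}}\circ\expan{K_\e}{N}$ and $D_\mu f_{\e,\expan{\mu_\e}{N}}\circ\expan{K_\e}{N}$, up to a composition with $T_\omega$) preserve this order.

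For the norm bound \eqref{lin_est_N_2N}, I would estimate the three pieces separately. The bracketed term is controlled by Lemma \ref{lem_est_lin_part}, giving the contribution $\frac{r^{2N+1}}{1-r}\norm{E^N}{\r}{\g_N} + C\nu^{-4}(aN)^{3\tau}\d^{-(\tau+4d+1)}\frac{r^{N+1}}{1-r}\norm{E^N}{\r}{\g_N}^2$. The two tail terms are estimated by using the uniform boundedness of the analytic maps $Df_{\e,\expan{\mu_\e}{N}}\circ\expan{K_\e}{N}$ and $D_\mu f_{\e,\expan{\mu_\e}{N}}\circ\expan{K_\e}{N}$ on the domain (guaranteed by hypothesis \textit{i)} of Lemma \ref{lem_estimate_reducibility}), which yields bounds proportional to $\norm{\expanin{\Delta_\e}{2N}{\infty}}{\r-\d}{r\g_N}$ and $\sup_{|\e|\le r\g_N}|\expanin{\sigma_\e}{2N}{\infty}|$. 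Then Corollary \ref{lem_est_trunc_ser} gives the dominant factor $C\nu^{-3}(aN)^{2\tau}\d^{-(\tau+3d)}\frac{r^{\frac{3}{2}N+1}}{(1-r^{1/2})^2}\norm{E^N}{\r}{\g_N}$, which absorbs the first summand from Lemma \ref{lem_est_lin_part} since $\frac{r^{2N+1}}{1-r}=\frac{r^{2N+1}}{(1-r^{1/2})(1+r^{1/2})} \le \frac{r^{\frac{3}{2}N+1}}{(1-r^{1/2})^2}$ for $r\in(0,1)$.

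There is no serious obstacle here; the whole argument is a bookkeeping of the Cauchy-type estimates already established, and the only care required is (i) to verify that the identity splits the error correctly into a ``truncation of the linear part'' and ``linear image of truncation tails'' contributions, and (ii) to track the exponents of $r$ so that the smaller power $r^{\frac{3}{2}N+1}$ dominates, reflecting the fact that truncating the Taylor series of $\Delta_\e,\sigma_\e$ worsens the gain of powers of $\e$ from $r^{2N+1}$ to $r^{\frac{3}{2}N+1}$. This is the source of the different decay rate appearing in \eqref{lin_est_N_2N} compared to \eqref{est_lin_part}.
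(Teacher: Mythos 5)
Your proposal follows the same route as the paper: write $\expanin{\Delta}{N}{2N}=\Delta-\expanin{\Delta}{2N}{\infty}$ (and similarly for $\sigma$), use linearity of $D_1\E,D_2\E$ to split into the ``full correction'' linear part controlled by Lemma \ref{lem_est_lin_part} plus tail contributions controlled by Corollary \ref{lem_est_trunc_ser}, and then absorb the $\frac{r^{2N+1}}{1-r}$ term into the dominant $\frac{r^{3N/2+1}}{(1-r^{1/2})^2}$ term. You even spell out the absorption inequality that the paper's final line leaves implicit, so the argument is correct and no essential step is missing.
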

\begin{proof}
Recalling the notation $\expanin{\Delta_\e} a\infty \equiv \sum_{n=a+1}^\infty \Delta_n(\t)\e^n$ we have that $\dpy{\expanin \Delta N{2N}}+\dpy{\expanin \Delta {2N}\infty =\Delta}$. Also remember that $E^N=\E[\expan KN,\expan\mu N]$, then, using the linearity of the Frechet derivatives one obtains  
\begin{align*}
 &\E[\expan {K_\e}N,\expan{\mu_\e} N] + D_1\E[\expan {K_\e}N,\expan{\mu_\e} N]\expanin{\Delta_\e} N{2N} + D_2\E[\expan {K_\e}N,\expan{\mu_\e} N]\expanin{\sigma_\e} N{2N} \\
	& \qquad = \E[\expan {K_\e}N,\expan{\mu_\e} N] +D_1\E[\expan {K_\e}N,\expan{\mu_\e} N] \Delta_\e  + D_2\E[\expan {K_\e}N,\expan{\mu_\e} N] \sigma_\e  \\
	& \qquad \qquad - D_1\E[\expan {K_\e}N,\expan{\mu_\e} N]\expanin{\Delta_\e} {2N}\infty  - D_2\E[\expan {K_\e}N,\expan{\mu_\e} N]\expanin  {\sigma_\e}{2N}\infty \\
	& \qquad = \E[\expan {K_\e}N,\expan{\mu_\e} N] +D_1\E[\expan {K_\e}N,\expan{\mu_\e} N] \Delta_\e  + D_2\E[\expan {K_\e}N,\expan{\mu_\e} N] \sigma_\e   \\
	& \qquad\qquad -\left(Df_{\e,\expan\mu N}\circ \expan {K_\e} N\right)\expanin{\Delta_\e}{2N}\infty + \expanin{\Delta_\e}{2N}\infty\circ T_\omega - \left(D_\mu f_{\e,\expan\mu N}\circ \expan {K_\e} N\right)\expanin{\sigma_\e}{2N}\infty 
\end{align*} which implies \eqref{order_lin_part}. Moreover, using the relation above and the estimates in Lemma \ref{lem_est_lin_part} and Lemma \ref{lem_est_trunc_ser} one gets
\begin{align*}
 &\norm{\E[\expan {K}N,\expan{\mu} N] + D_1\E[\expan {K}N,\expan\mu N]\expanin\Delta N{2N} + D_2\E[\expan KN,\expan\mu N]\expanin\sigma N{2N}}{\r-\d}{r\g_N} \\
	& \qquad \leq \norm{\E[\expan K N,\expan{\mu} N] +D_1\E[\expan {K} N,\expan\mu N] \Delta +D_2\E[\expan K N,\expan\mu N]\sigma}{\r-\d}{r\g_N} \\
	& \qquad\quad  + C ( \norm{\expanin \Delta{2N}\infty}{\r-\d}{r\g_N} + \sup_{|\e|\leq r\g_N}\left|\expanin{\sigma_\e}{2N}\infty\right|) \\
	&\qquad \leq \frac{r^{2N+1}}{1-r}\norm{E^N}\r{\g_N} + C\nu^{-4}(aN)^{3\tau}\d^{-(\tau +4d +1)}\frac{r^{N+1}}{1-r}\norm{E^N}\r{\g_N} ^2 \\
	&\qquad\quad + C\nu^{-3}(aN)^{2\tau}\d^{-(\tau +3d)}\frac{r^{\frac{3}{2}N+1}}{(1-r^{1/2})^2}\norm{E^N}\r{\g_N}  + C\nu^{-1}(aN)^{\tau}\r^{-d}\frac{r^{\frac{3}{2} N+1}}{(1-r^{1/2})^2}\norm{E^N}\r{\g_N}  \\
	&\qquad \leq C\nu^{-3}(aN)^{2\tau}\d^{-(\tau +3d)}\frac{r^{\frac{3}{2}N+1}}{(1-r^{1/2})^2} \norm{E^N}\r{\g_N} + C\nu^{-4}(aN)^{3\tau}\d^{-(\tau +4d +1)}\frac{r^{N+1}}{1-r} \norm{E^N}\r{\g_N}^2
\end{align*}
\end{proof}

\begin{proposition}\label{lem_taylor_est}
Assuming the hypothesis of Lemma \ref{lem_estimate_reducibility} and Lemma \ref{lem_w_sigma_estimates}, for any $0<\d<\r$ and $0<r<1$ we have
\begin{equation}\label{ord_error_corrected}
\E\left[\expan {K_\e}N+ \expanin{\Delta_\e} N{2N}, \expan{\mu_\e} N + \expanin {\sigma_\e} N{2N}\right]\sim\ord{2N+1}
\end{equation}
and 	
\begin{align}
  &\norm{\E[\expan KN+ \expanin\Delta N{2N}, \expan\mu N + \expanin \sigma N{2N}]}{\r-\d}{r\g_N} \label{taylor_est}\\
	& \qquad \leq C\nu^{-3}(aN)^{2\tau}\d^{-(\tau +3d)}\frac{r^{\frac{3}{2}N+1}}{(1-r^{1/2})^2} \norm{E^N }\r{\g_N}\nonumber + C\nu^{-6}(aN)^{4\tau}\d^{-(2\tau +6d)}\frac{r^{N+1}}{(1-r^{1/2})^4} \norm{E^N }\r{\g_N}^2\nonumber \end{align}
where $C=C(d,\norm {\expan MN}{\r}{\g_N}$, $\norm {\left(\expan MN\right)^{-1}}{\r}{\g_N}$, $\norm {\expan \N N}{\r}{\g_N}$, $\norm {D\expan KN}{\r}{\g_N}$,$\T)$, the constant $C$ also depends on the norms of the  first and second derivatives of $f_{\e,\mu}$ evaluated at $\expan{K_\e}{N}$ and $\expan{\mu_\e}{N}$.
\end{proposition}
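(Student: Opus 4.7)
The plan is to reduce the estimate to two pieces that have essentially already been prepared: the ``linear'' part, which is handled by Proposition \ref{lem_lin_est_trunc}, and a genuine Taylor remainder in the corrections. Writing the Taylor expansion \eqref{taylor_exp} with the truncated corrections as increment gives
\[
\E[\expan{K_\e}N + \expanin{\Delta_\e}{N}{2N},\,\expan{\mu_\e}N + \expanin{\sigma_\e}{N}{2N}]
= L_\e + \R[\expanin{\Delta_\e}{N}{2N},\,\expanin{\sigma_\e}{N}{2N};\,\expan{K_\e}N,\expan{\mu_\e}N],
\]
where $L_\e$ is the linear expression already bounded in \eqref{lin_est_N_2N}. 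The first term therefore contributes the whole first summand of \eqref{taylor_est} plus a piece quadratic in $\|E^N\|$ which is absorbed in the second summand.

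For the remainder, I would use the standard Taylor estimate
\[
\normm{\R_\e}{\r-\d} \leq C\bigl(\normm{\expanin{\Delta_\e}{N}{2N}}{\r-\d}^2 + |\expanin{\sigma_\e}{N}{2N}|^2\bigr),
\]
where $C$ depends on the uniform bounds on the second derivatives of $f_{\e,\mu}$ on the image of $\expan{K_\e}N$ and on $\expan{\mu_\e}N$ (as in the proof of Lemma~\ref{lem_tayl_expan_1}; one has to note that the arguments of $\R$ are small enough that the composition makes sense, which follows from assuming $\|\expanin{\Delta_\e}{N}{2N}\|,|\expanin{\sigma_\e}{N}{2N}| \le \xi$). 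Then I would plug in the truncated estimates from Corollary~\ref{lem_est_trunc_ser}: this produces a factor $r^{3N+2}/(1-r^{1/2})^4$, together with $\nu^{-6}(aN)^{4\tau}\d^{-(2\tau+6d)}$ and $\|E^N\|^2$. Since the $r^{3N+2}$ factor can be split as $r^{N+1}\cdot r^{2N+1}$ and the latter absorbed into the bound on $\|E^N\|_{\r,\g_N}^2$ (using $|\e|\le r\g_N$ in the second factor), we obtain exactly the second summand of \eqref{taylor_est}, giving \eqref{taylor_est} after summing.

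For the order statement \eqref{ord_error_corrected}, by construction $\expanin{\Delta_\e}{N}{2N}\sim\ord{N+1}$ and $\expanin{\sigma_\e}{N}{2N}\sim\ord{N+1}$, so the Taylor remainder is $\O(|\e|^{2N+2})$. Combined with the order bound \eqref{order_lin_part} for the linear part, we get that the whole expression is $\O(|\e|^{2N+1})$, as claimed.

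The only substantive bookkeeping worry is matching the Cauchy factors coming from passing between $\Delta_\e$ and its truncation $\expanin{\Delta_\e}{N}{2N}$ (and similarly for $\sigma$), since Corollary \ref{lem_est_trunc_ser} was formulated precisely to make this step mechanical. Everything else—the domain loss $\d$, the small-divisor cost $\nu^{-1}(aN)^\tau$, and the invertibility constant $\T^N$—has already been absorbed into the constants of the preceding lemmas and Proposition \ref{lem_lin_est_trunc}. Thus the proof is a direct combination of \eqref{lin_est_N_2N}, the analytic-remainder bound, and \eqref{ord_error_corrected}; no new estimate is needed.
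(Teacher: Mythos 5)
Your proposal follows the same route as the paper: split via \eqref{taylor_exp} into the linear piece (already bounded in Proposition~\ref{lem_lin_est_trunc}) plus a quadratic Taylor remainder, then feed the truncated-correction bounds from Corollary~\ref{lem_est_trunc_ser} into the remainder estimate. The only slip is arithmetic: squaring $\normm{\expanin{\Delta}{N}{2N}}{\cdot}$ produces $r^{2N+2}/(1-r^{1/2})^4$, not $r^{3N+2}/(1-r^{1/2})^4$ (you may have picked up the exponent from the $\expanin{\Delta}{2N}{\infty}$ estimate), but since $0<r<1$ both are dominated by the $r^{N+1}$ in the target bound, so the conclusion is unaffected.
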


\begin{proof}
The expansion \eqref{ord_error_corrected} follows from using  the same argument as in the proof of Lemma \ref{lem_tayl_expan_1}. We also have
\begin{align*}
  &\norm{\R\left[\expan KN,\expan\mu N,\expanin \Delta N{2N}, \expanin\sigma N{2N}\right]}{\r-\d}{r\g_N} \\
	&\qquad \leq C\left( \norm {\expanin\Delta N{2N}}{\r-\d}{r\g_N}^2 +\sup_{|\e|\leq r\g_N} \left|\expanin {\sigma_\e} N{2N}\right|^2\right) \\
	&\qquad \leq C\left(\nu^{-6}(aN)^{4\tau}\d^{-(2\tau +6d)}\frac{r^{2N+2}}{(1-r^{1/2})^4} \norm {E^N}{\r-\d}{r\g_N}^2 + \nu^{-2}(aN)^{2\tau}\r^{-2d}\frac{r^{2N+2}}{(1-r^{1/2})^4} \norm {E^N}{\r-\d}{r\g_N}^2 \right).
\end{align*}
Combining this estimate with \eqref{lin_est_N_2N} in Lemma \ref{lem_lin_est_trunc} one gets \eqref{taylor_est}.
\end{proof}

\section{Iteration of the quasi-Newton method.} \label{sec_proof_main}

We start this section giving the choice of parameters which quantify the loss of regularity at any step of the quasi Newton method. Lemma \ref{induction} will  guarantee  that the Newton method is well defined at any step. We note that we have loss of domain in both the variable on the torus, $\t$, and the variable of the perturbation, $\e$. In contrast with the regular KAM theory we end up losing much more domain in $\e$, so that at the end we do not have any $\e$ domain. 

\subsection{The iterative procedure.}
We denote by $h\in\nat$ the number of steps of the quasi Newton method. We consider
\begin{equation}\label{delta_h}
\d_h:=\frac{\r_0}{2^{h+2}}\quad\mbox{and}\quad \r_{h+1}:=\r_{h} -\d_h \geq \frac{\r_0}{2}\quad\mbox{for }h\geq 1,
\end{equation}
where $\r_h$ denotes the radius of analyticity in the variable $\t$ at step $h$, that is, at step $h$ we will be considering functions in the space $\A_{\r_h}$. Note that $\r_0=\r'$ can be the one given in Theorem \ref{theo_prev_paper}. Since at any step we double the number of coefficients of the Lindstedt expansions, we have, 
\begin{equation}\label{N_h}
N_h:= 2^hN_0
\end{equation}
and
\begin{equation}\label{gamma_h}
\quad \tilde{\g}_h:=\g_{N_h}= \left(\frac{\nu}{2}\right)^{1/\alpha} \frac{1}{(aN_h)^{\tau/\alpha}} = \left(\frac{\nu}{2}\right)^{1/\alpha} \frac{1}{(a2^hN_0)^{\tau/\alpha}}
\end{equation}
where $\alpha\in\nat$ is the exponent in $\l(\e)=1-\e^\alpha$, $a\in\nat$, and $N_0\in \nat$ is a fixed constant to be chosen later. Note that $\tilde{\g}_h$ is the radius of the domain of analyticity in the variable $\e$ at step $h$, that is, at step $h$ we will be considering functions in the space $\A_{\r_h,\tilde{\g}_h}$. Also note that
\begin{equation}\label{gamma_h_2}
    \tilde{\g}_{h+1} =2^{-\tau/\alpha} \tilde{\g}_h.
\end{equation}
Denoting $K_0 :=\expan K {N_0}$ and $\mu_0:= \expan\mu {N_0}$,
for $h \geq 1$ we have
\begin{equation}
    K_h:= \expan K{N_0} +\expanin \Delta {N_0}{N_1} + \cdots +\expanin \Delta {N_{h-1}}{N_h} \qquad \mu_h := \expan\mu {N_0} +\expanin\sigma{N_0}{N_1}+ \cdots +\expanin\sigma{N_{h-1}}{N_h}.
\end{equation}
Furthermore, denoting
\begin{equation}
\Delta_h := \expanin\Delta{N_h}{N_{h+1}}\quad\mbox{and}\quad \sigma_h :=\expanin \sigma{N_h}{N_{h+1}}\quad\mbox{for }h\geq 0 
\end{equation}
we have that, for $h\geq 0$
\begin{equation}
    K_{h+1} =K_h +\Delta_h	\quad\mbox{and}\quad \mu_{h+1}=\mu_h +\sigma_h. 
\end{equation}
Finally, denote also 
\begin{align}
 & e_h:=\norm {\E[K_h,\mu_h]}{\r_h}{\tilde{\g}_h} =\norm{E^{N_h}}{\r_h}{\tilde{\g}_h} \\
 & d_h:=\norm {\Delta_h}{\r_{h+1}}{\tilde{\g}_{h+1}} \\
 & v_h:=\norm {D\Delta_h}{\r_{h+1}}{\tilde{\g}_{h+1}} \\
 & s_h:= \sup_{|\e |\leq\tilde{\g}_{h+1}} |\sigma_h(\e) |.
\end{align}
\begin{remark}
We emphasize the dependence of $\tilde{\g}_h$ in $N_h$, note that $\tilde{\g}_h \rightarrow 0 $ as $N_h \rightarrow \infty$ ($h\rightarrow\infty$). This implies that this quasi Newton method will not converge in any Banach space $\A_{\r_h,\tilde{\g}_h}$, because the domains in $\e$ shrink to $0$, however, at each step we get estimates in balls with positive radius, $\tilde{\g}_h$. An analysis of these bounds will provide us with estimates of the coefficients of the expansion.
Note also that to start with $e_0\ll 1$ we require $N_0$ sufficiently large in the formal power series in Theorem \ref{theo_prev_paper}.   
\end{remark}
Note that with this new notation the estimates in Corollary \ref{lem_est_trunc_ser} can be written as
\begin{align}
     & d_h \leq \hat{C}_h \nu^{-3}(aN_h)^{2\tau}\d_h^{-(\tau +3d)}\left(\frac{1}{2^{\tau/\alpha}} \right)^{N_h}e_h \label{d_h}\\
     & v_h \leq \hat{C}_h \nu^{-3}(aN_h)^{2\tau}\d_h^{-(\tau +3d +1)}\left(\frac{1}{2^{\tau/\alpha}} \right)^{N_h}e_h \\
    & s_h \leq \hat{C}_h \nu^{-1}(aN_h)^\tau \d_h^{-d} \left(\frac{1}{2^{\tau/\alpha}}\right)^{N_h}e_h
\end{align}
where $\hat{C}_h$ is an explicit constant depending in a polynomial manner on $\norm {M_h}{\r_h}{\tilde{\g}_h}$, $\norm {M_h^{-1}}{\r_h}{\tilde{\g}_h}$, $\norm {\N_h}{\r_h}{\tilde{\g}_h}$, $\norm {DK_h}{\r_h}{\tilde{\g}_h}$, and $\T_h$. 
Moreover, the non linear estimate \eqref{taylor_est} given in Proposition \ref{lem_taylor_est} implies 
\begin{equation}\label{taylor_est_h}
e_{h+1}\leq \tilde{C}_h \nu^{-6} (aN_h)^{4\tau}\d_h^{-(2\tau +6d)}\left(\frac{1}{2^{\tau/\alpha}}\right)^{N_h} \left( e_h + e_h^2 \right)
\end{equation}
where $\tilde{C}_h$ is a constant which also depends explicitly on $\norm {M_h}{\r_h}{\tilde{\g}_h}$, $\norm {M_h^{-1}}{\r_h}{\tilde{\g}_h}$, $\norm {\N_h}{\r_h}{\tilde{\g}_h}$, $\norm {DK_h}{\r_h}{\tilde{\g}_h}$, and $\T_h$.

\begin{remark}\label{rem_constants}
In the following we will denote $C$ a constant depending on $\nu, \tau, d, \xi, \r_0, \left|J^{-1}\right|$; and that is a polynomial in $\norm {M_0}{\r_0}{\tilde{\g}_0}$, $\norm {M_0^{-1}}{\r_0}{\tilde{\g}_0}$, $\norm {\N_0}{\r_0}{\tilde{\g}_0}$, $\norm {DK_0}{\r_0}{\tilde{\g}_0}$, and $\T_0$. We will also denote $$C_h = \max\left( \hat{C}_h, \tilde{C}_h \right).$$  In Lemma \ref{induction} , we  give smallness conditions so that $C_h\leq C$ for every $h\geq 0$. Since we are working  with expansions near to $(\expan K{N_0}, \expan\mu {N_0})$ it is natural to expect that the quantities $\norm {M_h}{\r_h}{\tilde{\g}_h}$, $\norm {M_h^{-1}}{\r_h}{\tilde{\g}_h}$, $\norm {\N_h}{\r_h}{\tilde{\g}_h}$, $\norm {DK_h}{\r_h}{\tilde{\g}_h}$, and $\T_h$ will be close to $\norm {M_0}{\r_0}{\tilde{\g}_0}$, $\norm {M_0^{-1}}{\r_0}{\tilde{\g}_0}$, $\norm {\N_0}{\r_0}{\tilde{\g}_0}$, $\norm {DK_0}{\r_0}{\tilde{\g}_0}$, and $\T_0$, respectively. For now, we assume that $C$ is large enough, for instance $C> 2C_0$.  Here $M_h=\expan M{N_h}$, $\N_h= \expan \N {N_h}$, and $\T_h= \T^{N_h}$  as in \eqref{M}, \eqref{N}, and \eqref{T}.
\end{remark}

Considering this uniform constant $C$ on \eqref{taylor_est_h}, and  taking $N_0$ sufficiently large, yields $e_h<1$  for any $h>0$, and  inequality \eqref{taylor_est_h} implies
\begin{equation}\label{error_h_est}
e_{h+1} \leq C \nu^{-6} (aN_h)^{4\tau} \delta_h^{-(2\tau +6d)} \left(\frac{1}{2^{\tau/\alpha}}\right)^{N_h} e_h.
\end{equation}

\begin{remark}\label{rem_taylor_est_simp}
Due to Remark \ref{rem_constants} and the definitions of $\d_h, \r_h, N_h$, and $\tilde{\g}_h$; the inequality \eqref{error_h_est} can be rewritten as $$e_{h+1}\leq C \nu^{-6} (aN_0)^{4\tau}\r_0^{-(2\tau +6d)} 2^{-(4\tau +12d)}\left(2^h\right)^{6\tau +6d} \left(\frac{1}{2^{\tau/\alpha}}\right)^{2^hN_0}e_h$$ or 
\begin{equation}\label{taylor_h}
    e_{h+1}\leq CDB^h r^{2^hN_0}e_h
\end{equation}
where 
$$D=\nu^{-6}(aN_0)^{4\tau}\r_0^{-(2\tau +6d)} 2^{-(4\tau +12d)},\quad r= 2^{-\tau/\alpha}\quad \mbox{and}\quad B= 2^{6\tau +6d}.$$
\end{remark}

\begin{lemma}\label{induction} Assuming  that $2^{3(\tau +3d)+1}CDBr^{N_0}\leq\frac{1}{2}$, $Br^{N_0} <1$, $ N_0^{2\tau}e_0 \ll 1$, and $$C\nu^{-3}(aN_0)^{2\tau}\r_0^{-(\tau +3d+1)} 2^{2\tau +6d +2} e_0\ll 1.$$
Then, for all integers $h\geq 0$ the following properties hold:
\begin{itemize}
    \item[$(p1;h)$] $$\norm {K_h-K_0}{\r_h}{\tilde{\g}_h}\leq \ell_K N_0^{2\tau}e_0 < \xi$$ $$\sup_{|\e |\leq \tilde{\g}_{h+1}} \left|\mu_h -\mu_0\right| \leq \ell_\mu N_0^\tau e_0 <\xi$$ with $\ell_K\equiv  C\nu^{-3}a^{2\tau}\r_0^{-(\tau +3d)} 2^{2\tau +6d} $ and $\ell_\mu \equiv C\nu^{-1} a^\tau 2^d\r_0^{-d}$
    \item[$(p2;h)$]
    $$e_h\leq  (CD)^h B^{h^2} r^{(2^h-1)N_0}e_0$$
    \item[$(p3;h)$]
    $$C_h\leq C$$
\end{itemize}
\end{lemma}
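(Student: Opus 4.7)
The proof proceeds by induction on $h\ge 0$, establishing $(p1;h)$, $(p2;h)$ and $(p3;h)$ simultaneously. The base case $h=0$ is immediate: $K_0-K_0=0$ and $\mu_0-\mu_0=0$ give $(p1;0)$, $(p2;0)$ reduces to $e_0\le e_0$, and $(p3;0)$ holds by the choice of the uniform constant $C$ in Remark~\ref{rem_constants} (we assumed $C>2C_0$). The nontrivial work is in the inductive step.

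For the induction step, assume $(p1;h')$, $(p2;h')$, $(p3;h')$ hold for all $h'\le h$. First I would establish $(p2;h+1)$: the hypothesis $(p3;h)$ lets us replace $C_h$ by $C$ in the nonlinear estimate \eqref{taylor_h}, giving $e_{h+1}\le CDB^h r^{2^hN_0}e_h$. Combining this with the inductive bound $(p2;h)$ yields
\[
e_{h+1}\le CDB^{h}r^{2^hN_0}\cdot (CD)^{h}B^{h^2}r^{(2^h-1)N_0}e_0 = (CD)^{h+1}B^{h(h+1)}r^{(2^{h+1}-1)N_0}e_0,
\]
and since $h(h+1)\le (h+1)^2$, this gives $(p2;h+1)$.

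Next I would prove $(p1;h+1)$ by summing the increments. Writing $K_{h+1}-K_0=\sum_{j=0}^h\Delta_j$, the estimate \eqref{d_h}, $(p3;j)$ and $(p2;j)$ give
\[
d_j\le C\nu^{-3}(a2^jN_0)^{2\tau}\delta_j^{-(\tau+3d)}r^{2^jN_0}(CD)^j B^{j^2}r^{(2^j-1)N_0}e_0
= \ell_KN_0^{2\tau}e_0\,\alpha_j,
\]
where $\alpha_j=[2^{3\tau+3d}CD]^jB^{j^2}r^{(2^{j+1}-1)N_0}$ and $\alpha_0=r^{N_0}\le 1$. One checks that the ratio $\alpha_{j+1}/\alpha_j=2^{3\tau+3d}CDB^{2j+1}r^{2^{j+1}N_0}$; using $2^{j+1}\ge 2j+2$ for $j\ge 0$ and the smallness hypothesis $2^{3(\tau+3d)+1}CDBr^{N_0}\le 1/2$ together with $Br^{N_0}<1$, one obtains $\alpha_{j+1}/\alpha_j\le 1/2$. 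Hence $\sum_{j=0}^h\alpha_j\le 2$, so $\|K_{h+1}-K_0\|_{\rho_{h+1},\tilde\gamma_{h+1}}\le 2\ell_KN_0^{2\tau}e_0$ (and this is $<\xi$ by the last smallness assumption on $e_0$). The analogous argument for $\mu_{h+1}-\mu_0$, using the estimate for $s_j$, gives the corresponding bound with $\ell_\mu$. (The constant $2$ can be absorbed into $\ell_K,\ell_\mu$ by mild adjustment of these constants; alternatively one sharpens the ratio estimate.)

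Finally, $(p3;h+1)$ is the main obstacle, since it is what makes the whole scheme self-consistent: the constants $\hat C_{h+1},\tilde C_{h+1}$ depend polynomially on $\|M_{h+1}\|$, $\|M_{h+1}^{-1}\|$, $\|\mathcal N_{h+1}\|$, $\|DK_{h+1}\|$, and $\mathcal{T}_{h+1}$, all of which are built from $K_{h+1}$ and $\mu_{h+1}$. The plan is to exploit $(p1;h+1)$: since $\|K_{h+1}-K_0\|_{C^1}$ is controlled (the $C^1$ bound follows by another application of Cauchy estimates to $K_{h+1}-K_0=\sum d_j$ using the $v_j$ estimates, again summable by the same geometric argument), one may view $M_{h+1}$, $\mathcal N_{h+1}$, etc., as small perturbations of their values at $K_0$ (where they satisfy the bounds used to define $C$). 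A standard Neumann series argument shows $\|M_{h+1}^{-1}\|$ and $\mathcal T_{h+1}$ remain close to their initial values, provided the perturbation is small enough; the smallness hypothesis $C\nu^{-3}(aN_0)^{2\tau}\rho_0^{-(\tau+3d+1)}2^{2\tau+6d+2}e_0\ll 1$ is precisely tailored so that the perturbation of these quantities is $\le C-C_0$, giving $C_{h+1}\le C$. The main delicate point is the invertibility of the matrix in \eqref{T}: one uses the non-degeneracy hypothesis \textbf{HND}, perturbed continuously in $\e$ and in the approximation, to ensure $\mathcal T_{h+1}\le 2\mathcal T_0$ uniformly in $h$. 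This closes the induction.
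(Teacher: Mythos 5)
Your plan follows the paper's proof almost step by step: base case trivial, a geometric-series bound on $\sum d_j$ (resp.\ $\sum s_j$) for $(p1;h+1)$, telescoping the nonlinear estimate for $(p2;h+1)$, and controlling $\|DK_{h+1}-DK_0\|$ to keep the algebraic constants uniform for $(p3;h+1)$. Your more explicit bookkeeping of the geometric sum (the ratio $\alpha_{j+1}/\alpha_j\le 1/2$ and the resulting factor $2$) is in fact a little more careful than the paper, which silently absorbs that factor.

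The one genuine logical issue is the order in which you establish $(p1;h+1)$ and $(p2;h+1)$. You prove $(p2;h+1)$ first, applying the nonlinear estimate \eqref{taylor_h} to get $e_{h+1}\le CDB^hr^{2^hN_0}e_h$. But \eqref{taylor_h} rests on Proposition \ref{lem_taylor_est}, whose proof uses a Taylor-remainder bound that requires the corrected pair $(K_{h+1},\mu_{h+1})=(K_h+\expanin{\Delta}{N_h}{N_{h+1}},\,\mu_h+\expanin{\sigma}{N_h}{N_{h+1}})$ to stay a distance $\ge\xi$ inside the domain of $f$ (and the relevant compositions to be defined). That is precisely $(p1;h+1)$. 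The paper therefore proves $(p1;H+1)$ first — using only the inductive estimates $d_j$, $s_j$ for $j\le H$, i.e.\ only $(p2;j)$, $(p3;j)$ for $j\le H$ — and then says explicitly ``Since $(p_1;H+1)$ is true, we use the estimate \eqref{taylor_h}.'' Your argument for $(p1;h+1)$ does not invoke $(p2;h+1)$, so the fix is merely to swap your two paragraphs; but as written the dependence chain is inverted. Everything else, including the perturbative control of $M_{h+1}$, $\N_{h+1}$, $\T_{h+1}$ through $\|DK_{h+1}-DK_0\|$ (using the $v_j$ estimates and the same geometric series with the extra $\delta_j^{-1}$ from the Cauchy estimate) and the use of $\textbf{HND}$ for the invertibility of the matrix in \eqref{T}, matches the paper's argument.
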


\begin{remark}
Note that by \eqref{norm_E^N} we have $e_0 \sim \mathcal{O}(N_0^{-(\tau/\alpha)N_0})$, due to the fact that we estimate $e_0$ in a ball with radius $\tilde{\g}_0 \sim \mathcal{O}(N_0^{-\tau/\alpha}) $. So the assumptions on the smallness of $N_0e_0$ are satisfied. 
\end{remark}

\begin{proof}
Note that $(p1;0)$, $(p2;0)$, and $(p3;0)$ are trivial. 

Let us now prove $(p1,H+1)$, $(p2,H+1)$, and $(p3,H+1)$ assuming they are true for $h=1,2,...,H$. Noticing that $2^j\leq 2^{j+1}-1$, for any $j\geq 0$,  and assuming that $N_0$ is large enough such that $2^{3(d+\tau)}CDBr^{N_0}\leq\frac{1}{2}$ and $Br^{N_0} <1$,  we have  
\begin{align*}
    \norm {K_{H+1}-K_0}{\r_{H+1}}{\tilde{\g}_{H+1}} & = \norm{\expanin\Delta{N_0}{N_1}+ ... + \expanin\Delta{N_H}{N_{H+1}}}{\r_{H+1}}{\tilde{\g}_{H+1}} \\
	    & \leq \sum_{j=0}^H d_j \leq \sum_{j=0}^H \hat{C}_j \nu^{-3}(aN_j)^{2\tau}\d_j^{-(\tau+ 3d)} r^{N_j}e_j \\
	    & \leq \sum_{j=0}^H C\nu^{-3}(a2^jN_0)^{2\tau}\r_0^{-(\tau +3d)} 2^{2\tau +6d}2^{(\tau +3d)j} r^{2^jN_0}e_j \\
	    & \leq C\nu^{-3} (aN_0)^{2\tau}\r_0^{-(\tau +3d)} 2^{2\tau +6d} \sum_{j=0}^H 2^{3(d+\tau)j} r^{2^jN_0} \left((CD)^j B^{j^2}r^{(2^j-1)N_0}e_0\right) \\
	    & \leq C\nu^{-3} (aN_0)^{2\tau}\r_0^{-(\tau +3d)} 2^{2\tau +6d}  \sum_{j=0}^H 2^{3(d+\tau)j} (CD)^j B^{j^2}r^{(2^{j+1}-1)N_0}e_0 \\
	    & \leq C\nu^{-3} (aN_0)^{2\tau}\r_0^{-(\tau +3d)} 2^{2\tau +6d} \sum_{j=0}^H 2^{3(d+\tau)j} (CD)^j B^{j^2}r^{2^jN_0}e_0 \\
	    & \leq C\nu^{-3} (aN_0)^{2\tau}\r_0^{-(\tau +3d)} 2^{2\tau +6d} e_0 \sum_{j=0}^H \left( 2^{3(d+\tau)}CDBr^{N_0} \right)^j \\
	    & \leq C\nu^{-3}(aN_0)^{2\tau}\r_0^{-(\tau +3d)} 2^{2\tau +6d} e_0 \\
	    & \leq \ell_K N_0^{2\tau}e_0
\end{align*}
Similarly,
\begin{align*}
\sup_{|\e |\leq \tilde{\g}_{H+1}} \left|\mu_{H+1} -\mu_0\right| & =\sup_{|\e |\leq \tilde{\g}_{H+1}} \left| \expanin\sigma{N_0}{N_1} + ... + \expanin\sigma{N_H}{N_{H+1}} \right| \\
	& \leq \sum_{j=0}^H s_j \leq \sum_{j=0}^H \hat{C}_j\nu^{-1} (aN_j)^\tau \d_j^{-d} r^{N_j}e_j\\
	& \leq \sum_{j=0}^H C\nu^{-1}(a2^jN_0)^\tau \r_0^{-d}2^{(j+2)d} r^{2^jN_0} \left((CD)^j B^{j^2}r^{(2^j-1)N_0}e_0\right) \\
	& \leq C\nu^{-1} (aN_0)^\tau \r_0^{-d}2^{2d} \sum_{j=0}^H (2^{\tau+d})^j (CD)^j B^{j^2}r^{(2^{j+1}-1)N_0} e_0 \\
	& \leq C\nu^{-1} (aN_0)^\tau \r_0^{-d}2^{2d} \sum_{j=0}^H (2^{\tau+d})^j (CD)^j B^{j^2}r^{2^jN_0} e_0 \\
	& \leq  C\nu^{-1} (aN_0)^\tau \r_0^{-d}2^{2d}e_0 \sum_{j=0}^H\left(2^{\tau+d} CDBr^{N_0}\right)^j \\
	& \leq  C\nu^{-1}(aN_0)^\tau 2^d\r_0^{-d}e_0 \\
	& \leq \ell_\mu N_0^\tau e_0.
\end{align*} 
Thus, taking  $N_0$ large  enough, which makes $e_0$ small, we get  $\ell_K N_0^{2\tau}e_0<\xi$ and $\ell_\mu N_0^{\tau}e_0<\xi$.

Since $(p_1;H+1)$ is true, we use the estimate \eqref{taylor_h} given in  Remark \ref{rem_taylor_est_simp}, which is a consequence of the nonlinear  estimates given in Lemma \ref{lem_taylor_est},  that is 
\begin{equation}
e_{h+1}=\norm {\E(K_{h} +\Delta_{h}, \mu_{h} +\sigma_{h})}{\r_{h+1}}{\tilde{\g}_{h+1}} \leq CDB^{h} r^{2^{h}N_0}e_{h}
\end{equation}
where $D$, $B$, and $r$ are as in Remark \ref{rem_taylor_est_simp}. This yields,

\begin{align*}
e_{h+1} & \leq CDB^{h}r^{2^{h}N_0}e_{h} \\
	& \leq CDB^{h}r^{2^{h}N_0}\left((CD)^h B^{h^2}r^{(2^{h}-1)N_0}e_{0}\right) \\
	& \leq (CD)^{h+1} B^{h^2+h}r^{(2^{h+1}-1)N_0} e_{0} \\
	& \leq (CD)^{h+1} B^{(h+1)^2}r^{(2^{h+1}-1)N_0} e_{0}
\end{align*}
which yields $(p_2,H+1)$.
In order to prove $(p_3;H+1)$  note that 
\begin{eqnarray}
\norm{\N_h - \N_0}{\r_h}{\tilde{\g}_h} \leq \overline{C} \norm {DK_h -DK_0}{\r_h}{\tilde{\g}_h} \\
\norm{M_h - M_0}{\r_h}{\tilde{\g}_h} \leq \overline{C} \norm {DK_h -DK_0}{\r_h}{\tilde{\g}_h} \\
\norm{M_h^{-1} - M_0^{-1}}{\r_h}{\tilde{\g}_h} \leq \overline{C} \norm {DK_h -DK_0}{\r_h}{\tilde{\g}_h} \\
|\T_h-\T_0| \leq \overline{C} \norm {DK_h -DK_0}{\r_h}{\tilde{\g}_h} 
\end{eqnarray}
where $\overline{C}$ is a uniform constant. The above inequalities come from the fact that $M_h$, $\N_h$, and $\T_h$ are algebraic expressions of $DK_h$, $Df_{\cdot,\mu_h}$, and $D_\mu f_{\cdot,\mu_h}$; see \eqref{M}, \eqref{N}, \eqref{S}, \eqref{T}. Then,

\begin{align*}
\norm{DK_{H+1} -DK_0}{\r_{H+1}}{\tilde{\g}_{H+1}} & = \norm{\expanin {D\Delta}{N_0}{N_1}+ ... + \expanin{D\Delta}{N_H}{N_{H+1}}}{\r_{H+1}}{\tilde{\g}_{H+1}} \\
	& \leq \sum_{j=0}^H d_j \leq \sum_{j=0}^H \hat{C}_j \nu^{-3}(aN_j)^{2\tau}\d_j^{-(\tau+ 3d+1)} r^{N_j}e_j \\
	& \leq \sum_{j=0}^H C\nu^{-3}(a2^jN_0)^{2\tau}\r_0^{-(\tau +3d +1)} 2^{2\tau +6d+2}2^{(\tau +3d+1)j} r^{2^jN_0}e_j \\
	& \leq C\nu^{-3} (aN_0)^{2\tau}\r_0^{-(\tau +3d +1)} 2^{2\tau +6d +2} \sum_{j=0}^H 2^{(3d+3\tau +1)j} r^{2^jN_0} \left((CD)^j B^{j^2}r^{(2^j-1)N_0}e_0\right) \\
	& \leq C\nu^{-3} (aN_0)^{2\tau}\r_0^{-(\tau +3d +1)} 2^{2\tau +6d +2}  \sum_{j=0}^H 2^{(3d +3\tau +1)j} (CD)^j B^{j^2}r^{(2^{j+1}-1)N_0}e_0 \\
	& \leq C\nu^{-3} (aN_0)^{2\tau}\r_0^{-(\tau +3d +1)} 2^{2\tau +6d +2}  \sum_{j=0}^H 2^{(3d+3\tau +1)j} (CD)^j B^{j^2}r^{2^{j}N_0}e_0 \\
	& \leq C\nu^{-3} (aN_0)^{2\tau}\r_0^{-(\tau +3d +1)} 2^{2\tau +6d +2} e_0 \sum_{j=0}^H \left( 2^{3d+3\tau+1}CDBr^{N_0}\right)^j \\
	& \leq C\nu^{-3}(aN_0)^{2\tau}\r_0^{-(\tau +3d+1)} 2^{2\tau +6d +2}e_0 \\
\end{align*}
where the sum is bounded as in the previous estimates. Taking $e_0$ small enough, such that $\overline{C}C\nu^{-3}(aN_0)^{2\tau}\r_0^{-(\tau +3d+1)} 2^{2\tau +6d +2} e_0 \ll 1$, we are able to verify $(p3;H+1)$ because $C_{H+1}$ is an algebraic expression of $M_H$, $\N_H$, and $\T_H$; and taking $C\geq 2C_0$, for example.
\end{proof}

\subsection{Proof of main Lemma \ref{main_theo}.}\label{proof-main-lemma}
For the proof of the main Lemma we inherit all the notation introduced throughout this section.

\begin{proof}
Note that Theorem \ref{theo_prev_paper}  assures the existence of the Lindstedt series satisfying \eqref{norm_error_1}. That is, given $K_0\in \A_\r$ and $\mu_0\in\Lambda\subseteq \complex$ satisfying $f_{0,\mu_0}\circ K_0 = K_0\circ T_\omega$ and \textbf{HND},  there exists $\r_0<\r$ and power expansions $\expan{K_\e }{N}$ and $\expan{\mu_\e}{N}$ such that $$\label{norm_error_1}
\normm{f_{\e,\expan {\mu_\e} N}\circ\expan {K_\e}N -\expan {K_\e}N\circ T_\omega}{\r'}\leq C_N|\e|^{N+1}$$ for any $N\geq0$. This expansion is unique under the normalization condition \eqref{normal_coeff}.

If $\expan{K_\e }{N}$ and $\expan{\mu_\e}{N}$ satisfy hypothesis \textbf{HTP1} and \textbf{HTP2} then, we can choose $N_0$ such that $\expan K{N_0}$ and $\expan{\mu }{N_0} $ satisfy  the hypothesis  of Lemmas \ref{lem_estimate_reducibility} and \ref{lem_w_sigma_estimates}. Also, $N_0$ needs to be large enough such that $2^{3(\tau +3d)+1}CDBr^{N_0}\leq\frac{1}{2}$, $Br^{N_0} <1$, $\ell_K N_0^{2\tau}e_0 < \xi$, $\ell_\mu N_0^\tau e_0<\xi$ and $$\overline{C}C\nu^{-3}(aN_0)^{2\tau}\r_0^{-(\tau +3d+1)} 2^{2\tau +6d +2} e_0\ll 1,$$ then  Lemma \ref{induction} can be applied and this allows us to iterate the quasi Newton method described in Algorithm \ref{algo}. That is, we can construct the unique formal power series as follows $$\expan{K_\e}{N_0} +\expanin{\Delta_\e}{N_0}{2N_0} + \expanin{\Delta_\e}{2N_0}{2^2N_0} + \cdots + \expanin{\Delta_\e}{2^hN_0}{2^{h+1}N_0} + \cdots  $$
$$\expan{\mu_\e}{N_0} +\expanin{\mu_\e}{N_0}{2N_0} + \expanin{\mu_\e}{2N_0}{2^2N_0} + \cdots + \expanin{\mu_\e}{2^hN_0}{2^{h+1}N_0} + \cdots  $$
Note that by definition of $\tilde{\g}_h$ we will have $\tilde{\g}_h= r^h\tilde{\g}_0$ , where $r=2^{-\tau/\alpha}$ and $\tilde{\g}_0=2^{-1/\alpha}\nu^{1/\alpha}(aN_0)^{-\tau /\alpha}$, see \eqref{gamma_h_2}. 
Before giving the detailed computations, note that $\tilde{\g}_h \sim (2^hN_0)^{-\tau/\alpha}$ and if $n\in\left(2^hN_0, 2^{h+1}N_0\right]\cap\nat$ then $$(\tilde{\g}_h)^{-n} \sim (2^hN_0)^{C(\tau/\alpha)2^hN_0} \sim n^{C(\tau/\alpha)n}.$$ Using this together with Cauchy estimates is expected to yield the Gevrey estimates.   
More precisely, if $n\in\left(2^hN_0, 2^{h+1}N_0\right]\cap\nat$, using Cauchy estimates, \eqref{d_h}, and $(p2;h)$ we have
\begin{align*}
\normm{K_n}{\frac{\r_0}{2}} &\leq \left( \tilde{\g}_{h+1}\right)^{-n}\norm {\Delta_h}{\frac{\r_0}{2}}{\tilde{\g}_{h+1}} \\
    &\leq \left( \tilde{\g}_{h+1}\right)^{-n}\norm {\Delta_h}{\r_{h+1}}{\tilde{\g}_{h+1}}
    \\
	& \leq  (r^{h+1}\tilde{\g}_0)^{-n} d_h 
	\\
	& \leq (r^{h+1}\tilde{\g}_0)^{-2^{h+1}N_0} \hat{C}_h \nu^{-3}(aN_h)^{2\tau}\d_h^{-(\tau +3d)}r ^{N_h}e_h \\
	& \leq (r^{h+1}\tilde{\g}_0)^{-2^{h+1}N_0} C\nu^{-3} \left(a2^hN_0 \right)^{2\tau} \r_0^{-(\tau +3d)} 2^{(2\tau +6d)} 2^{(\tau +3d)h} r^{2^hN_0}(CD)^h B^{h^2} r^{(2^h-1)N_0}e_0 
	\\
	& \leq C\nu^{-3}\r_0^{-(\tau +3d)} 2^{(2\tau +6d)}(aN_0)^{2\tau}e_0 (2^{3\tau +3d}CD)^hB^{h^2}  (\tilde{\g}_0)^{-2^{h+1}N_0}  r^{(-(h+1)2^{h+1} +2^{h+1}-1)N_0} 
	\\
	& \leq C\nu^{-3}\r_0^{-(\tau +3d)} 2^{(2\tau +6d)}(aN_0)^{2\tau}e_0 (2^{3\tau +3d}CD)^hB^{h^2} (2^{1/\alpha}\nu ^{-1/\alpha} (aN_0)^{\tau/\alpha})^{2^{h+1}N_0} r^{-(h2^{h+1}+1)N_0} 
	\\
	& \leq \hat{L} \left(2^{3\tau +3d}CDB 2^{2/\alpha}\nu ^{-2/\alpha} a^{2\tau/\alpha}\right)^{2^h N_0} (N_0^{2\tau/\alpha})^{2^hN_0} (2^{\tau/\alpha})^{(h2^{h+1}+1)N_0} 
	\\
	& \leq \hat{L}2^{(\tau/\alpha)N_0} F^{2^hN_0} (N_0^{2\tau/\alpha})^{2^hN_0} (2^{2\tau/\alpha})^{h2^hN_0} 
	\\
	& \leq L F^{2^hN_0}(2^hN_0)^{(2\tau/\alpha)2^hN_0}  
	\\
    & \leq LF^n n^{(2\tau/\alpha)n}
    \end{align*}
where $\hat{L}=C\nu^{-3}\r_0^{-(\tau +3d)} 2^{2\tau +6d}(aN_0)^{2\tau}e_0$, $F=2^{3\tau +3d+2/\alpha}CDB\nu ^{-2/\alpha} a^{2\tau/\alpha}$, and $L=\hat{L}(2^{\tau/\alpha})^{N_0}$. The estimates for $\mu_n$ are obtained in a similar way.
\end{proof}

\bigskip




\subsection{Proof of Theorem~\ref{gev-asymptotics}}\label{proof-gev-asymptotics}
\begin{proof}
Inheriting the notation from Lema~\ref{induction}, consider $N_0$ sufficiently large such that the a-posteriori theorem, Theorem 14 in \cite{Cal-Cel-Lla-16}, can be applied. That is, $N_0$ such that 
\begin{equation}
\sup_{\e\in\G, |\e|\leq \tilde{\gamma}_0} \normm{E^{N_0}_\e}{\r} \leq \hat{C}\left(\nu\tilde{\nu}(\l;\omega,\tau) \right)^2\d^{-4(\tau+\delta)}.
\end{equation}
where $\tilde{\nu}(\l;\omega,\tau)$ is defined in \eqref{nu-lambda}.
Then, following the discussion in Section~\eqref{sec-gev-asymp} and applying the a-posteriori theorem, Theorem 14 in \cite{Cal-Cel-Lla-16}, one obtains 
\begin{align*}   
\sup_{\e\in\G, |\e|\leq \tilde{\gamma}_{h+2}} \normm{ \expan{K_\e}{2^hN_0} - K_\e }{\r_0-\d} & \leq \hat{C} \nu^{-1}\tilde{\nu}(\l;\omega,\tau)^{-1}\d^{-2(\tau+d)} \sup_{\e\in\G, |\e|\leq \tilde{\gamma}_{h+2}} \normm{E^{2^hN_0}_\e}{\r_0}
\end{align*}
where $\G$ is defined in \eqref{set-G}.

Now, considering $n\in(2^hN_0,2^{h+1}N_0]\cap\nat$ one has
\begin{align*}
\sup_{\e\in\G, |\e|\leq \tilde{\gamma}_{h+2}} \normm{ \expan{K_\e}{n} - K_\e }{\r_0-\d} & \leq \sup_{\e\in\G, |\e|\leq \tilde{\gamma}_{h+2}} \normm{ \expan{K_\e}{2^{h+1}N_0}  -\expanin{\Delta_\e}{n}{2^{h+1}N_0}  - K_\e }{\r_0-\d}   \\
	&\leq \sup_{\e\in\G, |\e|\leq \tilde{\gamma}_{h+2}} \normm{ \expan{K_\e}{2^{h+1}N_0} - K_\e }{\r_0-\d} +  \sup_{\e\in\G, |\e|\leq \tilde{\gamma}_{h+2}} \normm{\expanin{\Delta}{n}{2^{h+1}N_0}}{\r_0-\d}  \\
	& \leq \hat{C} \nu^{-1}\tilde{\nu}^{-1}\d^{-2(\tau+d)} \sup_{\e\in\G, |\e|\leq \tilde{\gamma}_{h+2}} \normm{E^{2^hN_0}_\e}{\r_0} +  \sup_{\e\in\G, |\e|\leq \tilde{\gamma}_{h+2}} \normm{\expanin{\Delta_\e}{n}{2^{h+1}N_0}}{\r_0-\d} \\
	&\leq \hat{C} \nu^{-1}\tilde{\nu}^{-1}\d^{-2(\tau+d)} \norm{E^{2^hN_0}_\e}{\r}{\tilde{\g}_{h+2}} +  \norm{\expanin{\Delta_\e}{n}{2^{h+1}N_0}}{\r_0-\d}{\tilde{\g}_{h+2}} \\
	&\leq \hat{C} \nu^{-1}\tilde{\nu}^{-1}\d^{-2(\tau+d)}  \norm{E^{2^hN_0}_\e}{\r_0}{\tilde{\g}_{h}} +   \frac{r^{n+1}}{1-r}\norm{\expanin{\Delta_\e}{2^hN_0}{2^{h+1}N_0}}{\r_0-\d}{\tilde{\g}_{h+1}} \\
	&\leq \hat{C} \nu^{-1}\tilde{\nu}^{-1}\d^{-2(\tau+d)} e_h +   r^{n+1}d_h \\ 
	&\leq \hat{C} \nu^{-1}\tilde{\nu}^{-1}\d^{-2(\tau+d)} e_h + r^{n+1} C \nu^{-3}(aN_h)^{2\tau}\d_h^{-(\tau +3d)}r^{N_h}e_h  \\
	&\leq \left( U + C \nu^{-3} (aN_0)^{2\tau}\r_0^{-(\tau+3d)} 2^{2\tau+6d}     2^{h(3\tau +3d)} r^{n+1}r^{2^hN_0}\right)(CD)^h B^{h^2}r^{(2^h-1)N_0}e_0    \\
	&\leq \left( U +  V  2^{h(3\tau +3d)} r^{n+1}r^{2^hN_0}\right)(CD)^h B^{h^2}r^{(2^h-1)N_0}e_0    \\
\end{align*}

where $U=\hat{C} \nu^{-1}\tilde{\nu}^{-1}\d^{-2(\tau+d)}$ and $V=C \nu^{-3} (aN_0)^{2\tau}\r_0^{-(\tau+3d)} 2^{2\tau+6d}$
\end{proof}

\medskip
\appendix
\section{ The case of the dissipative standard map of Theorem \ref{main_theo_dsm}  }\label{appendix}

\subsection{Verifying trigonometric polynomial hypothesis for the dissipative standard map}
Consider the dissipative standard map $f_{\e,\mu_\e}:\torus\times \real \rightarrow \torus\times \real$ given by 

\begin{equation} \label{dis-est-map-appendix}
f_{\e,\mu_\e}(x,y)= \left(x +\l(\e)y +\mu_\e - \e V(x),  \l(\e)y +\mu_\e - \e V(x) \right).
\end{equation}
Where $V(x)$ is a trigonometric polynomial. In this section we verify that maps like \eqref{dis-est-map-appendix} satisfy \textbf{HTP1} and \textbf{HTP2} of Lemma~\ref{main_theo}. For the sake of simplicity in the exposition we do it for the case $\l(\e)=1-\e^3$. The general case for $\alpha\in\nat$ is done by very similar computations, fixing the value of $\alpha=3$ allows an easy analysis of the Lindstedt series. 

Note that one has $f_{\e,\mu}^* \Omega = \l(\e)\Omega$ for the  symplectic form $\Omega_{(x,y)} = dx\wedge dy$, so it is conformally symplectic. One can write the map as 
\begin{align*}
x_{n+1} &= x_n +y_{n+1} \\
y_{n+1} &= \l(\e)y_n +\mu_\e -\e V(x_n)
\end{align*}  equivalently  
\begin{equation} x_{n+1} -(1+\l(\e)) x_n + \l(\e) x_{n-1} -\mu_\e +\e V(x_n)=0 \label{lagran_x_n}. 
\end{equation}
Considering a parametric representation of the variable $x_n\in \torus$ as $x_n=\t_n +u_\e(\t_n)$, $\t_n\in \torus$; where $u_\e: \torus \rightarrow \real$ is a $1$-periodic function and assuming that $\t_n$ varies linearly, i.e., $\t_{n+1} = \t_n +\omega$, then, \eqref{lagran_x_n} becomes \begin{equation}
u_\e(\t+\omega)-(1+\l(\e))u_\e(\t) +\l(\e)u_\e(\t-\omega) +(1-\l(\e))\omega -\mu_\e +\e V(\t +u_\e(\t)) =0 \label{u_eq}
\end{equation}
If  $u_\e$ satisfies \eqref{u_eq} it is easy to check that $K_\e: \torus \rightarrow \torus\times\real$, given by $$K_\e(\t)=\begin{pmatrix} \t +u_\e(\t) \\ \omega + u_\e(\t) - u_\e(\t-\omega) \end{pmatrix},$$ satisfies 
$f_{\e,\mu_\e}\circ K_\e(\t) = K(\t +\omega).$ Therefore, the problem of finding Lindstedt series for quasiperiodic orbits for the map $f_{\e,\mu_\e}$ is equivalent to find asymptotic power series to a solution, $(u_\e,\mu_\e)$, of \eqref{u_eq}. \\ 
Using $\l(\e) = 1-\e^3$, equation \eqref{u_eq} becomes
    \begin{equation}\label{eq_u} u_\e(\t+\omega)-(2-\e^3)u_\e(\t) +(1-\e^3)u_\e(\t-\omega) +\e^3\omega -\mu_\e + \e V(\t +u_\e(\t)) =0. 
    \end{equation}
Introducing the operator $$L_\omega u(\t) = u(\t +\omega) -2u(\t) +u(\t-\omega),$$ and expanding in power series on $\e$, i.e.,  $ u_\e(\t)=\sum_{n=0}^\infty u_n(\t)\e^n $   and $ \mu_\e =\sum_{n=0}^\infty \mu_n\e^n$  equation \eqref{eq_u} becomes \begin{multline}
\sum_{k=0}^2\left(L_\omega u_k(\t) - \mu_k\right)\e^k - \left(L_\omega u_3(\t) -\mu_3 +u_0(\t) -u_0(\t-\omega) - \omega\right)\e^3 \\
  + \sum_{k=4}^\infty \left( L_\omega u_k(\t) -\mu_k + u_{k-3}(\t) -u_{k-3}(\t-\omega)\right)\e^k = - \sum_{k=1}^\infty S_{k-1}(\t)\e^k \label{power_ser_eq} 
\end{multline} 


\begin{remark}
When $V(\t)$ is a trigonometric polynomial, the coefficients $S_n$ can be computed as follows. Note that $V_k(\t)=\hat{f}_k e^{2\pi i k\t}$ satisfies the relation 
\begin{equation}\label{deriv-trig}
\frac{d}{d\e}V_k(\t +u_\e(\t)) = 2\pi i k\frac{d}{d\e}u_\e(\t)V_k(\t +u_\e(\t)).
\end{equation} 

Thus, considering $$ V_k(\t+u_\e(\t)) =\sum_{n=0}^\infty S_n^k(\t)\e^n$$ and \eqref{deriv-trig} the coefficients $S_n^k$ satisfy the following relation
\begin{equation}\label{coef-trig-pol}
(n+1)S_{n+1}^k = \sum_{\ell =0}^n 2\pi ik (\ell +1)u_{\ell +1} S_{n-\ell}^k,
\end{equation}
and $S_0^k(\t)= \hat{f}_k e^{2\pi i k\t}$.
Furthermore, if $V(\t) = \sum_{|k|\leq a} \hat{f}_k e^{2\pi k \t} =\sum_{|k|\leq a} V_k(\t)$ is a trigonometric polynomial of degree $a$, considering $$ V(\t+u_\e(\t)) =\sum_{n=0}^\infty S_n(\t)\e^n,$$ the coefficients $S_n(\t)$ are given by $$ S_n(\t) = \sum_{|k|\leq a}S_n^k(\t) $$ where $S_n^k$ is given by \eqref{coef-trig-pol}.

\end{remark}

\begin{remark}
Note that if $\eta$ is a trigonometric polynomial and $\varphi$ is a solution of the equation $L_\omega \varphi =\eta$ then, $\varphi$ is a trigonometric polynomial of the same degree as $\eta$. This is due to the fact that the Fourier coefficients of $\varphi$ satisfy $\hat{\varphi}_k =\frac{1}{2(\cos (2\pi k\cdot \omega) -1)}\hat{\eta}_k $. Note that the equation $L_\omega \varphi = \eta$ has a solution if $\int_\torus \eta(\t)d\t =0$, and this solution is unique if we impose the normalization $\int_\torus \varphi(\t)d\t =0$.
\end{remark}

\begin{proposition} If $V(\t)$, in \eqref{dis-est-map-appendix}, is a trigonometric polynomial of degree $a$, then  $u_n(\t)$ is a trigonometric polynomial of degree $an$. Furthermore, $S_{n-1}(\t)$ is a trigonometric polynomial of degree $an$. 
\end{proposition}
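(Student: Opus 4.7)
The proof will proceed by strong induction on $n$, proving jointly that $u_n$ has degree at most $an$ and that $S_{n-1}$ has degree at most $an$. The tools are elementary: $\deg(fg)\le \deg f+\deg g$ for trigonometric polynomials, and the fact that if $\eta$ is a trigonometric polynomial of degree $m$ with zero average, then the unique zero-average solution $\varphi$ of $L_\omega \varphi = \eta$ is also a trigonometric polynomial of degree $m$ (as recorded in the remark preceding the proposition, since $\hat\varphi_k = \hat\eta_k/[2(\cos(2\pi k\cdot\omega)-1)]$).

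For the base case, the coefficient of $\e^0$ in \eqref{power_ser_eq} gives $L_\omega u_0 = \mu_0$; taking averages forces $\mu_0=0$, and the normalization makes $u_0\equiv 0$, a trigonometric polynomial of degree $0$. From the recurrence \eqref{coef-trig-pol} with $u_0=0$ (so $S_0^k(\t) = \hat f_k e^{2\pi i k\t}$ has degree at most $a$), one reads off $S_0 = V$, of degree $a$, so the claim holds for $n=1$ as well.

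For the inductive step, suppose $u_j$ has degree at most $aj$ for all $j<n$. I first control $S_{n-1}$. A sub-induction on $m$ using \eqref{coef-trig-pol} shows that $S_m^k$ has degree at most $a(m+1)$: indeed, each summand $u_{\ell+1}S_{m-\ell}^k$ has degree at most $a(\ell+1)+a(m-\ell+1)=a(m+2)$, so $S_{m+1}^k$ does too. Summing over $|k|\le a$ yields $\deg S_{n-1}\le an$. Now look at the coefficient of $\e^n$ in \eqref{power_ser_eq}. For $n=1,2$ one gets $L_\omega u_n = \mu_n - S_{n-1}$; for $n=3$, $L_\omega u_3 = \mu_3 + \omega - S_2$ (using $u_0=0$); and for $n\ge 4$, $L_\omega u_n = \mu_n - u_{n-3} + u_{n-3}(\t-\omega) - S_{n-1}$. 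In every case, the right-hand side is a trigonometric polynomial whose degree is bounded by $\max(a(n-3),\,an)=an$. Choosing $\mu_n$ so that the right-hand side has zero average (which, by the non-degeneracy from \textbf{HND} and the normalization \eqref{normal_coeff}, is exactly what determines $\mu_n$ and $u_n$ uniquely), and applying the remark on $L_\omega$ cited above, we conclude that $u_n$ is a trigonometric polynomial of degree at most $an$. This closes the induction and proves both statements of the proposition.

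The only subtlety worth flagging is bookkeeping in the combined induction: one must establish the bound on $S_{n-1}$ strictly \emph{before} solving for $u_n$, because the equation for $u_n$ features $S_{n-1}$ on the right-hand side. The sub-induction on the recurrence \eqref{coef-trig-pol} handles this cleanly since at step $m$ it only uses $u_{\ell+1}$ with $\ell+1\le m+1\le n-1$, all of which are controlled by the main inductive hypothesis. No analytic estimate is needed; the argument is purely algebraic and rests on the two degree-preservation facts stated at the beginning.
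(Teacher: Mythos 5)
Your proof is correct and follows the same route as the paper: the same induction on $n$ driven by the order-$n$ equation from \eqref{power_ser_eq}, with the degree bound $\deg S_{n-1}\le an$ obtained from the recurrence \eqref{coef-trig-pol}. The only difference is presentational — you make the nested induction on the $S_m^k$ explicit and flag the order in which the two claims must be established — whereas the paper just inlines the degree count into the $n$-th step; the substance is identical.
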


\begin{proof}
Equating the terms of same order in equation \eqref{power_ser_eq} one gets that for order zero $\mu_0=0$ and $u_0(\t)\equiv 0$. For order 1 we have, $$L_\omega u_1(\t) -\mu_1 = -S_0(\t).$$ So, taking $\mu_1=0$, $u_1$ becomes a trigonometric polynomial of degree $a$, because $S_0(\t)=V(\t)$. Now, for order 2 we have $$L_\omega u_2(\t) -\mu_2 = - S_1(\t),$$ if $\mu_2=0$ the right hand side is   $S_1(\t) = \sum_{|k|\leq a} S_1^k(\t)=  2\pi i u_1(\t)\sum_{ |k|\leq a} k S_0^k(\t)$ which is a trigonometric polynomial of degree $2a$, thus $u_2$ is a trig polynomial of degree $2a$. For order three we have $$L_\omega u_3(\t) -\mu_3 +\omega = -S_2(\t),$$ here we take $\mu_3=\omega$ and $u_3$ is a trig polynomial of degree $3a$ because $$ S_2(\t)= \sum_{|k|\leq a} S_2^k(\t) =\pi i u_1(\t)\sum_{|k|\leq a} k S_1^k(\t) +2\pi iu_2(\t) \sum_{|k|\leq a} k S_0^k(\t) $$ is of degree $3a$; then $u_3(\t)$ is of degree $3a$. Finally, for $n\geq 4$, assume the claim is valid for any $m < n$ then, the equation of order $n$ is $$L_\omega u_n(\t) = \mu_n - u_{n-3}(\t) +u_{n-3}(\t-\omega)-  S_{n-1}(\t). $$ So, taking $\mu_n =\int_\torus S_{n-1}(\t)d\t$, $u_n$ can be found and has degree $an$ since, $S_{n-1}=\sum_{|k|\leq n} S_{n-1}^k$ and each $S_{n-1}^k$  has degree $an$ due to \eqref{coef-trig-pol}. Note $u_{n-3}$ has degree $(n-3)a$.
\end{proof}

\begin{cor} If $V(\t)$, in \eqref{dis-est-map-appendix}, is a trigonometric polynomial of degree $a$, then for any fixed $\e$ the sum $\displaystyle{\sum_{n=0}^N u_n(\t)\e^n}$ is a trig polynomial of degree $aN$ in $\t$.	
\end{cor}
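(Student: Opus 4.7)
The Corollary is an immediate consequence of the preceding Proposition, so the plan is essentially to assemble the bookkeeping. The Proposition has already established that for every $n\ge 0$, the coefficient $u_n(\theta)$ in the Lindstedt expansion is a trigonometric polynomial of degree exactly $an$ in $\theta$ (with the understanding that $u_0\equiv 0$, $u_1$ has degree $a$, and so on, with the degree growing linearly because each new $S_{n-1}$ picks up one additional factor whose degree is bounded by $a$).

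First I would recall the elementary fact that a finite linear combination $\sum_{n=0}^{N} c_n(\varepsilon)\,p_n(\theta)$, where the $p_n$ are trigonometric polynomials in $\theta$ of degrees $d_n$ and the $c_n(\varepsilon)$ are arbitrary scalars (not depending on $\theta$), is itself a trigonometric polynomial in $\theta$ of degree at most $\max_n d_n$. Applying this with $c_n(\varepsilon)=\varepsilon^n$ and $p_n=u_n$, and using $d_n = an$ from the Proposition, yields that $\sum_{n=0}^N u_n(\theta)\varepsilon^n$ is a trigonometric polynomial in $\theta$ of degree at most $aN$. Since the top-order summand $u_N(\theta)\varepsilon^N$ itself has degree exactly $aN$ in $\theta$, and the Fourier modes of index $|k|=aN$ coming from $u_N$ cannot be cancelled by any lower $u_n$ (which only contain modes with $|k|\le an<aN$), the degree is exactly $aN$.

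There is no real obstacle here: the only content is the degree-tracking already carried out in the Proposition, together with the trivial remark that scaling a trigonometric polynomial by a scalar $\varepsilon^n$ does not change its degree in $\theta$. The statement should therefore be presented as a one-line corollary, with the proof reduced to citing the Proposition and noting that the maximum of the degrees $\{an:0\le n\le N\}$ is $aN$.
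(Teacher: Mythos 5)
Your argument is essentially the proof the paper leaves implicit: cite the Proposition that each $u_n$ is a trigonometric polynomial of degree (at most) $an$, and observe that a finite $\theta$-independent linear combination $\sum_{n=0}^{N} \e^n u_n(\theta)$ is therefore a trigonometric polynomial of degree at most $\max_n an = aN$. That much is correct and exactly what the paper intends.

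One small overreach: you assert that the Proposition gives $u_n$ of degree \emph{exactly} $an$, and use that to conclude the sum has degree \emph{exactly} $aN$. The Proposition (and its proof) only track upper bounds on the Fourier support; nothing there rules out cancellations that could make the top modes of some $u_n$ vanish (indeed $u_0 \equiv 0$). Fortunately the Corollary, read the same way as the Proposition, only needs ``degree at most $aN$,'' so the extra ``exactly'' claim is unnecessary. I would drop it and keep the proof to the single line: by the Proposition $\operatorname{deg}u_n \le an$, hence $\operatorname{deg}\sum_{n=0}^N u_n\e^n \le aN$.
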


Note that in this case \begin{equation}\label{explic_K}
\expan{ K_\e} N(\t) = \begin{pmatrix} \t +\sum_{n=0}^N u_n(\t)\e^n \\ \omega +\sum_{n=0}^N(u_n(\t)-u_n(\t-\omega))\e^n \end{pmatrix},
\end{equation} 
and using equation \eqref{power_ser_eq} we have 
\begin{align*}
    E_\e^N(\t) &:= f_{\e, \expan \mu N}\circ \expan {K_\e}N(\t) - \expan {K_\e}N(\t+\omega) =\sum_{n=N+1}^\infty \begin{pmatrix}
	S_{n-1}(\t) \\  S_{n-1}(\t)\end{pmatrix}\e^n 
\end{align*}
and therefore, for any fixed $\e$, $\expanin {E_\e} N {2N} (\t)$ 
is a trigonometric polynomial of degree $2aN$. Moreover, in this case the matrix $\expan {M_\e}N(\t) = \left[D\expan {K_\e}N (\t)| J^{-1}\circ \expan {K_\e}N(\t) D\expan {K_\e}N (\t)\expan {\N_\e} N(\t)\right]$ is given by
$$\expan {M_\e}N(\t) = \begin{bmatrix} 1+ \sum_{k=0}^N u_k'(\t)\e^k & \expan {\N_\e} N(\t)\sum_{k=0}^N (u'_k(\t-\omega)- u'_k(\t))\e^k  \\ \sum_{k=0}^N (u'_k(\t)- u'_k(\t -\omega))\e^k & \expan {\N_\e} N(\t) (1+ \sum_{k=0}^N u_k'(\t)\e^k)  \end{bmatrix}$$
where $\expan {\N_\e} N(\t)= \left((1+ \sum_{k=0}^N u_k'(\t)\e^k)^2 + (\sum_{k=0}^N (u'_k(\t)- u'_k(\t -\omega))\e^k)^2 \right)^{-1}$. So, $$\left(\expan {M_\e}N\circ T_\omega\right)^{-1} = \begin{bmatrix}  \left(\expan {\N_\e} N\circ T_\omega\right) (1+ \sum_{k=0}^N u_k'(\t+\omega)\e^k) & \left(\expan {\N_\e} N\circ T_\omega\right)\sum_{k=0}^N (u'_k(\t+\omega)- u'_k(\t))\e^k  \\ \sum_{k=0}^N (u'_k(\t)- u'_k(\t +\omega))\e^k &       1+ \sum_{k=0}^N u_k'(\t)\e^k \end{bmatrix}$$ 
which implies that $\expanin {\tilde{E}_{\e,2}} N{2N}$ is a trigonometric polynomial of degree $3aN$. Remember that $\expanin {\tilde{E}_{\e,2}} N{2N}$ is the second row of the vector $\expanin {\tilde{E}_\e} N{2N} = \left(\expan M N_\e\circ T_\omega\right)^{-1} \expanin {E_\e}N{2N}$. Note that $J=\begin{pmatrix}  0 & 1 \\ -1 &0\end{pmatrix}$.

Furthermore, we have $D_\mu f_{\e,\expan {\mu_\e} N}(x,y)= \begin{pmatrix} 1 \\ 1 \end{pmatrix}$, then the second row, $\tilde{A}_{\e,2}^N$, of the vector \\ $\tilde{A}_\e^N =\left(\expan {M_\e}N \circ T_\omega\right)^{-1} D_\mu f_{\e,\expan {\mu_\e} N}\circ \expan {K_\e}N$ is a trigonometric polynomial of degree $aN$.

The following proposition summarizes the computations presented above and assures that hypothesis \textbf{HTP1} and \textbf{HTP2} of the main Lemma \ref{main_theo} are satisfied for the dissipative standard map. 

\begin{proposition}\label{hyp-satisfy} For any $N\in\nat$, if $V(\t)$ in \eqref{dis-est-map-appendix} is a trigonometric polynomial of degree $a$, then 
$  \expanin {\tilde{E}_{\e,2}} N{2N}$ is a trigonometric polynomial of degree $3aN$, $\tilde{A}_{\e,2}^N$ is a trig polynomial of degree $aN$, and 
\begin{multline} \tilde{E}^N_{\Omega,\e}(\theta)\equiv D\expan {K_\e} N (\t+\omega)^\top J\circ \expan {K_\e} N(\t+\omega) D\expan {K_\e} N (\t+\omega) \\ 
			- D(f_{\e,\expan{\mu_\e} N}\circ \expan {K_\e} N(\t))^\top J\circ (f_{\e,\expan {\mu_\e} N}\circ \expan {K_\e} N(\t)) D(f_{\e,\expan{\mu_\e} N}\circ \expan {K_\e} N(\t))
\end{multline} is a trigonometric polynomial of degree $2aN$.
 \end{proposition}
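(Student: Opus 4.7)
My plan is to build directly on the explicit computations already collected in this appendix. The preceding proposition establishes by induction on the equation \eqref{power_ser_eq} that each $u_n(\theta)$ is a trigonometric polynomial of degree $an$, and via the recursion \eqref{coef-trig-pol} that $S_{n-1}(\theta)$ is a trigonometric polynomial of degree $an$. I will invoke these facts as the input, then simply track degrees through the algebraic formulas for $\expanin{E_\e}{N}{2N}$, $\left(\expan{M_\e}{N}\circ T_\omega\right)^{-1}$, and the relevant matrix products.

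The first step is to observe that, by the explicit form \eqref{explic_K}, both components of $E^N_\e(\theta)$ coincide with $\sum_{n=N+1}^\infty S_{n-1}(\theta)\e^n$, so the truncation $\expanin{E_\e}{N}{2N}(\theta)$ is a $2\times 1$ vector whose entries are trigonometric polynomials of degree at most $a\cdot(2N)=2aN$ for any fixed $\e$. The second step is to read off from the explicit formula for $\left(\expan{M_\e}{N}\circ T_\omega\right)^{-1}$ displayed in the excerpt that its \emph{second row} $\bigl(\sum_k(u_k'(\theta)-u_k'(\theta+\omega))\e^k,\; 1+\sum_k u_k'(\theta)\e^k\bigr)$ contains no $\expan{\N_\e}{N}$ factor (since the $J$-structure conspires to cancel it), so each entry is a bona fide trigonometric polynomial of degree at most $aN$. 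This cancellation of the non-polynomial factor $\expan{\N_\e}{N}$ in the second row is the single point that must be checked carefully, and it is the closest thing to an obstacle, though it is immediate from the $2\times 2$ inverse formula.

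With those two ingredients in hand, the three claims follow by counting degrees of products. For $\expanin{\tilde{E}_{\e,2}}{N}{2N}$, the product of the second row of $\left(\expan{M_\e}{N}\circ T_\omega\right)^{-1}$ (degree $\leq aN$) with $\expanin{E_\e}{N}{2N}$ (degree $\leq 2aN$) yields a trigonometric polynomial of degree at most $3aN$. For $\tilde{A}_{\e,2}^N$, using $D_\mu f_{\e,\mu}(x,y)=(1,1)^\top$ reduces the computation to the sum of the two entries of that second row, giving degree at most $aN$.

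Finally, for $\tilde{E}^N_{\Omega,\e}$ I would exploit the dimension $d=1$. Both $D\expan{K_\e}{N}(\theta+\omega)$ and $D(f_{\e,\expan{\mu}{N}}\circ\expan{K_\e}{N}(\theta))$ are $2\times 1$ column vectors, so the two quadratic forms $v^\top J v$ appearing in $\tilde{E}^N_{\Omega,\e}$ vanish identically for any column vector $v$, because $J$ is antisymmetric. Hence $\tilde{E}^N_{\Omega,\e}\equiv 0$, which is trivially a trigonometric polynomial of degree at most $2aN$. Assembling the three bounds gives the proposition; no delicate estimation is required, only bookkeeping of polynomial degrees once the induction on $u_n$ and $S_{n-1}$ is in place.
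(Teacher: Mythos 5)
Your proof is correct, and for the first two assertions it essentially retraces the paper's own computation: one reads off from \eqref{explic_K} that both entries of $\expanin{E_\e}{N}{2N}$ are $\sum_{n=N+1}^{2N}S_{n-1}(\theta)\e^n$ (degree $\le 2aN$), notes that the second row of $\left(\expan{M_\e}{N}\circ T_\omega\right)^{-1}$ is free of the non-polynomial factor $\expan{\N_\e}{N}$ and thus is a trigonometric polynomial of degree $\le aN$, and multiplies.

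For the third assertion you take a genuinely different and shorter route. You observe that since $d=1$, both $D\expan{K_\e}{N}$ and $D(f\circ\expan{K_\e}{N})$ are $2\times 1$ column vectors and $J$ is a \emph{constant antisymmetric} $2\times 2$ matrix, so $v^\top J v\equiv 0$ and $\tilde{E}^N_{\Omega,\e}\equiv 0$. This is the familiar fact that any curve in a two-dimensional symplectic manifold is automatically Lagrangian. The paper instead uses the conformally symplectic relation $(f_{\e,\mu}\circ K)^*\Omega=\lambda(\e)\,K^*\Omega$ to rewrite the second term as $\lambda(\e)\,D\expan{K_\e}{N}(\theta)^\top J\, D\expan{K_\e}{N}(\theta)$ and then counts degrees, obtaining $2aN$. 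Both are valid, but it is worth noting that the conformal rewrite is not merely a stylistic choice: the untransformed second term contains $Df\circ\expan{K_\e}{N}$, which involves $V'\!\left(\theta+\sum_{n\le N}u_n(\theta)\e^n\right)$, and for fixed $\e$ this composition is \emph{not} a trigonometric polynomial in $\theta$ (it involves factors $e^{2\pi i k u_\e(\theta)}$). Naive degree counting on the original expression would therefore fail; your antisymmetry observation and the paper's conformal rewrite are two distinct ways of neutralizing this obstacle. Your argument is sharper and simpler in the case at hand ($\tilde{E}^N_{\Omega,\e}\equiv 0$), whereas the paper's version would extend to conformally symplectic maps with $d>1$, where $DK^\top J\,DK$ need not vanish.
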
 

\begin{proof}
It is only left to prove the last claim. Note that $\tilde{E}^N_{\Omega,\e}(\theta)$ is the expression in coordinates of ${(\expan {K_\e} {N}\circ T_\omega)}^*  \Omega -(f_{\e,\expan\mu N}\circ \expan KN)^*\Omega$. Now, using the fact that $f_{\e,\mu}$ is conformally symplectic we have $(f_{\e,\expan{\mu_\e} N}\circ \expan {K_\e}N)^*\Omega = {\expan {K_\e}{N}}^* f_{\e,\expan{\mu_\e} N}^*\Omega = \l(\e) {\expan {K_\e}N}^* \Omega$, which means that, in coordinates \begin{multline}\tilde{E}^N_{\Omega,\e}(\theta,\e) = D\expan {K_\e} N (\t+\omega)^\top J\circ \expan {K_\e} N(\t+\omega) D\expan {K_\e} N (\t+\omega) \\  - \l(\e) D\expan {K_\e} N (\t)^\top J\circ \expan {K_\e} N(\t) D\expan {K_\e} N (\t)
\end{multline}
which is a polynomial of degree $2aN$ due to the fact that $J$ is a constant matrix and $$D\expan {K_\e}N(\t)= \begin{pmatrix}
1 +\sum_{n=0}^N u'_n(\t)\e^n \\ \sum_{n=0}^N(u'_n(\t) -u'_n(\t-\omega))\e^n  \end{pmatrix}$$ is a trigonometric polynomial of degree $aN$.
\end{proof}


\subsection{Uniqueness}
Note that for $\e=0$, $M_0=I$. Also note that the coefficients of the expansion \eqref{explic_K} are given by $$K_n(\t)=\begin{pmatrix} u_n(\t) \\ u_n(\t) -u_n(\t-\omega) \end{pmatrix}\qquad\mbox{for } n\geq 1 .$$ Therefore, the normalization condition $$\int_\torus \left[ M_0^{-1}K_n(\t)\right]_1d\t =0$$ in this case has the form $$\int_\torus u_n(\t)d\t =0,$$ which is satisfied by the construction of the $u_n's$. Thus, the expansion given in \eqref{explic_K} is the only one which satisfies the normalization condition.

\section*{Acknowledgements}

The authors would like to thank T. M-Seara, I. Baldom\'a, and V. Naudot for many suggestions and discussions.

\bibliography{gevreybib}

\begin{thebibliography}{CDRSS00}

\bibitem[Bae95]{Baesens95}
Claude Baesens.
\newblock Gevrey series and dynamic bifurcations for analytic slow-fast
  mappings.
\newblock {\em Nonlinearity}, 8(2):179--201, 1995.

\bibitem[BC]{Bus-Cal-20}
Adrian~P Bustamante and Renato~C Calleja.
\newblock Corrigendum and addendum to \emph{Computation of domains of
  analyticity for the dissipative standard map in the limit of small
  dissipation}.
\newblock In progress.

\bibitem[BC19]{Bus-Cal-19}
Adrian~P Bustamante and Renato~C Calleja.
\newblock Computation of domains of analyticity for the dissipative standard
  map in the limit of small dissipation.
\newblock {\em Physica D: Nonlinear Phenomena}, 395:15--23, 2019.

\bibitem[Ben88]{Bensoussan88}
Alain Bensoussan.
\newblock {\em Perturbation methods in optimal control}.
\newblock Wiley/Gauthier-Villars Series in Modern Applied Mathematics. John
  Wiley \& Sons Ltd., Chichester, 1988.
\newblock Translated from the French by C. Tomson.

\bibitem[BFM17]{BaldomaFM17}
Inmaculada Baldom\'{a}, Ernest Fontich, and Pau Mart\'{\i}n.
\newblock Gevrey estimates for one dimensional parabolic invariant manifolds of
  non-hyperbolic fixed points.
\newblock {\em Discrete Contin. Dyn. Syst.}, 37(8):4159--4190, 2017.

\bibitem[BH08]{BaldomaH08}
I.~Baldom\'{a} and A.~Haro.
\newblock One dimensional invariant manifolds of {G}evrey type in real-analytic
  maps.
\newblock {\em Discrete Contin. Dyn. Syst. Ser. B}, 10(2-3):295--322, 2008.

\bibitem[BHS96]{BroerHS96}
H.~W. Broer, G.~B. Huitema, and M.~B. Sevryuk.
\newblock {\em Quasi-Periodic Motions in Families of Dynamical Systems. {\rm
  Order Amidst Chaos}}.
\newblock Springer-Verlag, Berlin, 1996.

\bibitem[Car95]{Cartan95}
Henri Cartan.
\newblock {\em Elementary theory of analytic functions of one or several
  complex variables}.
\newblock Dover Publications, Inc., New York, 1995.
\newblock Translated from the French, Reprint of the 1973 edition.

\bibitem[CCdlL13]{Cal-Cel-Lla-13}
Renato~C Calleja, Alessandra Celletti, and Rafael de~la Llave.
\newblock A kam theory for conformally symplectic systems: efficient algorithms
  and their validation.
\newblock {\em Journal of Differential Equations}, 255(5):978--1049, 2013.

\bibitem[CCdlL17]{Cal-Cel-Lla-16}
Renato~C Calleja, Alessandra Celletti, and Rafael de~la Llave.
\newblock Domains of analyticity and lindstedt expansions of kam tori in some
  dissipative perturbations of hamiltonian systems.
\newblock {\em Nonlinearity}, 30(8):3151, 2017.

\bibitem[CD91]{Canalis91}
Mireille Canalis-Durand.
\newblock Formal expansion of van der {P}ol equation canard solutions are
  {G}evrey.
\newblock In {\em Dynamic bifurcations ({L}uminy, 1990)}, volume 1493 of {\em
  Lecture Notes in Math.}, pages 29--39. Springer, Berlin, 1991.

\bibitem[CDRSS00]{CanalisRSS00}
M.~Canalis-Durand, J.~P. Ramis, R.~Sch\"{a}fke, and Y.~Sibuya.
\newblock Gevrey solutions of singularly perturbed differential equations.
\newblock {\em J. Reine Angew. Math.}, 518:95--129, 2000.

\bibitem[Cel91]{Celletti91}
Alessandra Celletti.
\newblock Stability of satellites in spin-orbit resonances and capture
  probabilities.
\newblock In {\em Predictability, stability, and chaos in {$N$}-body dynamical
  systems ({C}ortina d'{A}mpezzo, 1990)}, volume 272 of {\em NATO Adv. Sci.
  Inst. Ser. B Phys.}, pages 337--344. Plenum, New York, 1991.

\bibitem[Cel13]{Celletti13}
Alessandra Celletti.
\newblock K{AM} theory and its applications: from conservative to dissipative
  systems.
\newblock In {\em European {C}ongress of {M}athematics}, pages 243--257. Eur.
  Math. Soc., Z\"{u}rich, 2013.

\bibitem[CGGG07]{Cos-Gal-Gen-Giu-07}
O.~Costin, G.~Gallavotti, G.~Gentile, and A.~Giuliani.
\newblock Borel summability and {L}indstedt series.
\newblock {\em Comm. Math. Phys.}, 269(1):175--193, 2007.

\bibitem[Cos09]{Cos-09}
Ovidiu Costin.
\newblock {\em Asymptotics and {B}orel summability}, volume 141 of {\em Chapman
  \& Hall/CRC Monographs and Surveys in Pure and Applied Mathematics}.
\newblock CRC Press, Boca Raton, FL, 2009.

\bibitem[DFIZ16]{DaviniFIZ16}
Andrea Davini, Albert Fathi, Renato Iturriaga, and Maxime Zavidovique.
\newblock Convergence of the solutions of the discounted {H}amilton-{J}acobi
  equation: convergence of the discounted solutions.
\newblock {\em Invent. Math.}, 206(1):29--55, 2016.

\bibitem[FT89]{FoiasT89}
C.~Foias and R.~Temam.
\newblock Gevrey class regularity for the solutions of the {N}avier-{S}tokes
  equations.
\newblock {\em J. Funct. Anal.}, 87(2):359--369, 1989.

\bibitem[ISM11]{IturriagaS11}
Renato Iturriaga and H{\'e}ctor S{\'a}nchez-Morgado.
\newblock Limit of the infinite horizon discounted {H}amilton-{J}acobi
  equation.
\newblock {\em Discrete Contin. Dyn. Syst. Ser. B}, 15(3):623--635, 2011.

\bibitem[Knu98]{Knuth}
Donald~E. Knuth.
\newblock {\em The art of computer programming. {V}ol. 2}.
\newblock Addison-Wesley, Reading, MA, 1998.
\newblock Seminumerical algorithms, Third edition [of MR0286318].

\bibitem[Lin92]{Lin92}
C.~H. Lin.
\newblock Gevrey character of formal solutions of a nonlinear system with a
  parameter.
\newblock In {\em Ordinary and delay differential equations ({E}dinburg, {TX},
  1991)}, volume 272 of {\em Pitman Res. Notes Math. Ser.}, pages 129--133.
  Longman Sci. Tech., Harlow, 1992.

\bibitem[MHER95]{MeadowsHER95}
Edward~S. Meadows, Michael~A. Henson, John~W. Eaton, and James~B. Rawlings.
\newblock Receding horizon control and discontinuous state feedback
  stabilization.
\newblock {\em Internat. J. Control}, 62(5):1217--1229, 1995.

\bibitem[MNF87]{MilaniNF87}
Andrea {Milani}, Anna~Maria {Nobili}, and Paolo {Farinella}.
\newblock {\em {Non-gravitational perturbations and satellite geodesy.}}
\newblock { Adam Hilger (IOP Publishing Ltd.), Bristol}, 1987.

\bibitem[Mos67]{Moser67}
J.~Moser.
\newblock Convergent series expansions for quasi-periodic motions.
\newblock {\em Math. Ann.}, 169:136--176, 1967.

\bibitem[Mos73]{Moser73}
J~Moser.
\newblock Stable and random motions in dynamical systems, volume 77 of annals
  of mathematics studies, 1973.

\bibitem[Pop00]{Popov00}
G.~Popov.
\newblock Invariant tori, effective stability, and quasimodes with
  exponentially small error terms. {I}. {B}irkhoff normal forms.
\newblock {\em Ann. Henri Poincar\'{e}}, 1(2):223--248, 2000.

\bibitem[R{\"u}s75]{Rus-75}
Helmut R{\"u}ssmann.
\newblock On optimal estimates for the solutions of linear partial differential
  equations of first order with constant coefficients on the torus.
\newblock In {\em Dynamical systems, theory and applications}, pages 598--624.
  Springer, 1975.

\bibitem[Sau92]{Sauzin92}
David Sauzin.
\newblock Caract\`ere {G}evrey des solutions formelles d'un probl\`eme de
  moyennisation.
\newblock {\em C. R. Acad. Sci. Paris S\'{e}r. I Math.}, 315(9):991--995, 1992.

\bibitem[Sev99]{Sevryuk99}
M.~B. Sevryuk.
\newblock The lack-of-parameters problem in the {K}{A}{M} theory revisited.
\newblock In {\em {H}amiltonian Systems with Three or More Degrees of Freedom
  (S'Agar\'o, 1995)}, pages 568--572. Kluwer Acad. Publ., Dordrecht, 1999.

\bibitem[SZ65]{Zygmund65}
S.~Saks and A~Zygmund.
\newblock {\em Analytic Functions}.
\newblock (Enlarged. Translated by E J Scott. Monografie Matematyczne, Tom 28),
  1965.
\newblock 2nd edn.

\end{thebibliography}
\bibliographystyle{alpha}

\Addresses
\end{document}